\documentclass[11pt,reqno]{amsart}
\usepackage{mathtools}\mathtoolsset{showonlyrefs} % for \eqref
\usepackage[margin=1in]{geometry}
\usepackage{amsmath,amssymb,amsthm,microtype,graphicx,lmodern}
\usepackage[authoryear]{natbib}
\usepackage[hidelinks]{hyperref}
\allowdisplaybreaks%
\numberwithin{equation}{section}%
\numberwithin{figure}{section}%
\numberwithin{table}{section}
\theoremstyle{plain}
\newtheorem{theorem}{Theorem}[section]

\newtheorem{lemma}[theorem]{Lemma}
\newtheorem{corollary}[theorem]{Corollary}
\theoremstyle{definition}

\newtheorem{remark}[theorem]{Remark}
\newtheorem{problem}[theorem]{Problem}

\title[An optimal Poincaré inequality for the complex Ginibre log-gas]{An optimal Poincaré inequality\\for the complex Ginibre log-gas}
\author{Djalil Chafaï}
\address{CEREMADE Université Paris-Dauphine -- PSL, and DMA, École normale
  supérieure -- PSL, France}
\urladdr{https://djalil.chafai.net/}
\email{djalil@chafai.net}
\date{Summer 2026}
\keywords{Poincaré inequality; spectral gap; complex Ginibre ensemble; Coulomb
  gas; overdamped Langevin dynamics; Vandermonde transform; holomorphic
  projection; Hörmander--Berndtsson estimate}
\subjclass[2020]{Primary 60E15, 60B20; Secondary 60J60, 32W05, 82B21, 82C31}
\begin{document}
\begin{abstract}
  We establish an optimal Poincaré inequality for real-valued symmetric
  observables of the complex Ginibre log-gas. Equality is attained by the real
  and imaginary parts of the center-of-mass observable. Equivalently, we
  determine the exact spectral gap of the associated overdamped Langevin
  dynamics, for real symmetric observables. The Hessian of the energy of this
  log-gas is unbounded below, so standard convexity arguments do not directly
  apply. The proof instead combines a Vandermonde transform, a holomorphic
  projection, and a complex Gaussian d-bar spectral-gap estimate,
  corresponding to the constant-curvature case of the Hörmander--Berndtsson
  estimate. It is short and self-contained. It remains valid, beyond the
  quadratic potential, for rotationally invariant additive plurisubharmonic
  potential perturbations. Additionally, we provide four alternative proofs,
  two of which yield sum-of-squares formulas for the deficit, based
  respectively on a Hermite expansion and on an integrated Bochner--Kodaira
  formula, the other two use the spectral analysis of a two-sided
  number-operator factorization and a Hermite--Slater polynomial expansion. We
  furthermore present new results related to linear statistics, Gaussian
  factorization, polynomial eigenfunctions, log-Sobolev inequalities, matrix
  lift and eigenvector overlaps, and non-quadratic potentials.
\end{abstract}
\maketitle
{\footnotesize\tableofcontents}
\section{Introduction and main result}

This article provides a sharp solution to a natural problem about a
probability measure on $\mathbb{C}^n\equiv\mathbb{R}^{2n}$, $n\geq1$, known as
the complex Ginibre ensemble log-gas, or Coulomb gas. It is given by
\begin{equation}\label{eq:mu}
  \mathrm{d}\mu_n
  =\frac{1}{c_n}\mathrm{e}^{-n\sum_{k=1}^n|z_k|^2}\prod_{j<k}|z_j-z_k|^2\mathrm{d}z
  =\frac{1}{c_n'}|V_n(z)|^2\mathrm{d}\gamma_n  
\end{equation}
where $\mathrm{d}z$ is the Lebesgue measure, $\gamma_n$ is the product complex
Gaussian distribution $\mathcal{N}_{\mathbb{C}^n}(0,1/n)$ on $\mathbb{C}^n$
with density $z\mapsto(\frac{n}{\pi})^n\mathrm{e}^{-n|z|^2}$,
$|z|^2=\sum_{k=1}^n|z_k|^2$, and $V_n(z)$ is the Vandermonde determinant
\begin{equation}\label{eq:Vn}
  V_n(z)=V_n(z_1,\ldots,z_n)=\prod_{1\leq j<k\leq n}(z_k-z_j)=\det{[z_k^{j-1}]}_{1\leq j,k\leq n}.
\end{equation}
The quantities $c_n$ and $c_n'$ are explicitly computable normalizing
constants\footnote{Can be seen as particular complex Selberg integrals,
  $c_n=\pi^nn^{-n(n+1)/2}\prod_{k=1}^n k!$, and $c'_n=(n/\pi)^nc_n$.}. We can
also view the probability measure $\mu_n$ as a Boltzmann--Gibbs measure at
inverse temperature $\beta_n=n^2$, namely
\begin{equation}\label{eq:boltzmanngibbs}
  \mathrm{d}\mu_n=\frac{1}{c_n}\mathrm{e}^{-\beta_n E_n(z)}\mathrm{d}z,
\end{equation}
with energy 
\begin{equation}\label{eq:Hn}
  E_n(z)=E_n(z_1,\ldots,z_n)=\frac{1}{n}\sum_{k=1}^n|z_k|^2+\frac{1}{n^2}\sum_{j\neq k}\log\frac{1}{|z_j-z_k|}.
\end{equation}
Following Eugene Wigner and Freeman Dyson, this allows us to interpret $\mu_n$
as a planar Coulomb gas of $n$ interacting particles, with equal charges,
confined by a quadratic potential. The two-body interaction is a Coulomb
repulsion since it is logarithmic in dimension two. This log-gas appears in
many contexts in probability theory and mathematical physics, notably in
random matrix theory for the law of the spectrum of complex Gaussian random
matrices, see
\cite{Ginibre1965,ForresterLogGases,ChafaiAspects,ByunForresterGinibre}, in
the wave functions of the fractional quantum Hall effect, see \cite{Laughlin},
and in the distribution of vortices related to the Ginzburg--Landau equations
of superconductivity, see \cite{SerfatyCoulombGases}. It also describes the
joint law of the positions in the ground state of non-interacting fermions in
a rapidly rotating two-dimensional harmonic trap, when the fermions occupy the
lowest Landau level, see \cite{LacroixMajumdarSchehrGinibreFermions}. We also
refer to \cite{LewinCoulombRieszGases,SerfatyLecturesCoulombRiesz} for the
analysis and statistical mechanics of such log-gases.

The non-zero set of the density of $\mu_n$ is
$\mathbb{C}^n_{\neq}=\mathbb{C}^n\setminus\mathbb{C}^n_{=}$ with
$\mathbb{C}^n_{=}=\cup_{j<k}\{z\in\mathbb{C}^n:z_j=z_k\}$.

\subsection{Main result}

We say that a function $f$ defined on $\mathbb{C}^n$ is \emph{symmetric} when
\begin{equation}\label{eq:symmetric}
  f(z_{\sigma(1)},\ldots,z_{\sigma(n)})=f(z_1,\ldots,z_n)
\end{equation}
for all $z\in\mathbb{C}^n$ and every permutation $\sigma\in\mathfrak{S}_n$
where $\mathfrak{S}_n$ is the symmetric group. We denote by $L^2(\mu_n)$ and
$L^2(\gamma_n)$ the Hilbert spaces of square integrable classes of functions
from $\mathbb{C}^n\to\mathbb{C}$, with respect to $\mu_n$ and $\gamma_n$
respectively. We denote by $\mathcal{C}^\infty_c(\mathbb{C}^n,\mathbb{R})$ the
space of real-valued and compactly supported functions of class
$\mathcal{C}^\infty$ defined on $\mathbb{C}^n$. For all
$f\in\mathcal{C}^\infty_c(\mathbb{C}^n,\mathbb{R})$, we denote by $\nabla f$
the Euclidean gradient of $f$ on $\mathbb{R}^{2n}\equiv\mathbb{C}^n$ and by
$|\nabla f|$ its Euclidean norm. We denote by $H^1_{\mathbb{R}}(\mu_n)$ the
Sobolev space obtained by taking the completion of
$\mathcal{C}^\infty_c(\mathbb{C}^n,\mathbb{R})$ for the norm defined by
\begin{equation}\label{eq:graphnorm}
  \|f\|_{H^1_{\mathbb{R}}(\mu_n)}=\sqrt{\|f\|_{L^2(\mu_n)}^2+\|\nabla f\|_{L^2(\mu_n)}^2}.
\end{equation}

% It turns out that $H^1_{\mathbb{R}}(\mu_n)$ is also the closure of
% $\mathcal{C}^\infty_c(\mathbb{C}^n_{\neq},\mathbb{R})$ for this norm where
% $\mathbb{C}^n_{\neq}=\mathbb{C}^n\setminus\mathbb{C}^n_{=}$ with
% $\mathbb{C}^n_{=}=\cup_{j<k}\{z\in\mathbb{C}^n:z_j=z_k\}$, since
% $\mathbb{C}^n_{=}$ has zero weighted capacity.

This definition of $H^1_{\mathbb{R}}(\mu_n)$ is useful for approximation. See
also Lemma \ref{le:gradient:closure:domain} for another viewpoint.

% Actually $H^1_{\mathbb{R}}(\mu_n)$ is also the space of real-valued
% $f\in L^2(\mu_n)$ such that $f\in H^1_{\mathrm{loc}}(\mathbb{C}^n_{\neq})$
% and $|\nabla f|\in L^2(\mu_n)$, but this is not used here.

Let us see $z_1,\ldots,z_n$ as the locations of the $n$ particles of a planar
gas or cloud of particles. Two very natural real observables of the
center-of-mass $\frac{1}{n}(z_1+\cdots+z_n)$ play a special role in this
article. They are the symmetric real-valued linear functions defined for all
$z\in\mathbb{C}^n$ by
\begin{equation}
  \varphi_\Re(z)=\Re\sum_{k=1}^nz_k=\sum_{k=1}^n\Re z_k
  \quad\text{and}\quad
  \varphi_\Im(z)=\Im\sum_{k=1}^n z_k=\sum_{k=1}^n\Im z_k.
\end{equation}
They are $\mathcal{C}^\infty(\mathbb{C}^n,\mathbb{R})$, and standard
approximation arguments show that they belong to $H^1_{\mathbb{R}}(\mu_n)$.

\begin{theorem}[Optimal Poincaré inequality]\label{th:main}
  For all $n\geq1$, for all symmetric $f\in H^1_{\mathbb{R}}(\mu_n)$, 
  \begin{equation}\label{eq:pi}
    \operatorname{Var}_{\mu_n}(f) \leq \frac{1}{2}\mathcal{E}_{\mu_n}(f,f)
  \end{equation}
  where
  \begin{equation}
    \operatorname{Var}_{\mu_n}(f)
    =\operatorname{Cov}_{\mu_n}(f,f)=\int_{\mathbb{C}^n} f^2\mathrm{d}\mu_n-\Bigl(\int_{\mathbb{C}^n}f\mathrm{d}\mu_n\Bigr)^2
    \quad\text{and}\quad
    \mathcal{E}_{\mu_n}(f,f)=\frac{1}{n}\int_{\mathbb{C}^n}|\nabla f|^2\mathrm{d}\mu_n.
  \end{equation}
  Moreover, equality is achieved for $f=a\varphi_\Re+b\varphi_\Im$, for all
  $a,b\in\mathbb{R}$.
\end{theorem}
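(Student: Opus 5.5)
The plan is to reduce \eqref{eq:pi} to a weighted $\bar\partial$ spectral-gap estimate for the Gaussian measure $\gamma_n$, using a Vandermonde transform and a holomorphic projection. The constant $\frac12$ will come from a factor $2$ produced by splitting the projection, together with a factor $\frac14$ coming from $|\partial_{\bar z}f|^2=\frac14|\nabla_z f|^2$ for real $f$.

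\textbf{Step 1 (holomorphic projection in $L^2(\mu_n)$).} By density I may take $f$ smooth, symmetric, real and compactly supported. I set $g=f-\int f\,\mathrm{d}\mu_n$. Let $\mathcal{H}\subset L^2(\mu_n)$ be the closed subspace of symmetric entire functions $h$ with $\int|h|^2\mathrm{d}\mu_n<\infty$, and let $h$ be the orthogonal projection of $g$ onto $\mathcal{H}$.
\begin{itemize}
\item Since $g\perp 1$ and $1\in\mathcal{H}$, the function $h$ has no constant term.
\item By rotation invariance of $\mu_n$ under $z\mapsto \mathrm{e}^{i\theta}z$, homogeneous components of different degrees are orthogonal. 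Hence $\int h k\,\mathrm{d}\mu_n=0$ for all $h,k\in\mathcal{H}$ with zero constant term, that is, $h\perp\bar k$.
\item Since $g$ is real, its projection onto $\bar{\mathcal{H}}$ is $\bar h$, and $\mathcal{H}_0\perp\bar{\mathcal{H}}_0$.
\end{itemize}
Writing $r=g-h-\bar h$, I therefore get
\[
\operatorname{Var}_{\mu_n}(f)=\|r\|^2+2\|h\|^2 .
\]
Set $u=g-h=r+\bar h$. Then $u\perp\mathcal{H}$ and $\|u\|^2=\|r\|^2+\|h\|^2$, so
\[
\operatorname{Var}_{\mu_n}(f)\le 2\|u\|^2 .
\]
Moreover $\partial_{\bar z_k}u=\partial_{\bar z_k}f$, because $h$ is holomorphic.

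\textbf{Step 2 (Vandermonde transform).} The weight $n|z|^2-\log|V_n|^2$ is not plurisubharmonic, since $\log|V_n|^2$ carries positive $\delta$-currents on $\mathbb{C}^n_{=}$. So I will not apply Hörmander to $\mu_n$ directly. Instead I set $F=uV_n$, which lies in $L^2(\gamma_n)$ and is antisymmetric because $u$ is symmetric.
\begin{itemize}
\item Antisymmetric entire functions in the Fock space $L^2(\gamma_n)\cap\mathcal{O}$ are exactly $V_n\cdot$(symmetric entire). Hence $u\perp\mathcal{H}$ in $L^2(\mu_n)$ is equivalent to $F$ being orthogonal to antisymmetric holomorphic functions in $L^2(\gamma_n)$.
\item Since $F$ is antisymmetric, pairing it against the antisymmetrization of an arbitrary holomorphic function shows that $F$ is orthogonal to the whole Fock space.
\item Since $V_n$ is holomorphic, $\partial_{\bar z_k}F=V_n\,\partial_{\bar z_k}f$.
\end{itemize}

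\textbf{Step 3 (Gaussian $\bar\partial$ estimate).} For $F\in L^2(\gamma_n)$ orthogonal to the Fock space with $\bar\partial F\in L^2(\gamma_n)$, I will use
\[
\int|F|^2\mathrm{d}\gamma_n\le\frac1n\sum_k\int|\partial_{\bar z_k}F|^2\mathrm{d}\gamma_n .
\]
This is the constant-curvature Hörmander estimate for the weight $n|z|^2$. It can also be checked by expanding in the basis $z^\alpha\bar z^\beta$ (complex Hermite polynomials): $\bar\partial^*\bar\partial$ acts as $n$ times the antiholomorphic number operator, whose smallest nonzero eigenvalue is $n$.

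Combining the steps, with $|\partial_{\bar z_k}f|^2=\frac14|\nabla_{z_k}f|^2$ for real $f$, gives
\[
\|u\|^2_{L^2(\mu_n)}\le\frac1{4n}\int|\nabla f|^2\mathrm{d}\mu_n,
\qquad\text{hence}\qquad
\operatorname{Var}_{\mu_n}(f)\le\frac1{2n}\int|\nabla f|^2\mathrm{d}\mu_n=\frac12\mathcal{E}_{\mu_n}(f,f).
\]

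\textbf{Equality case.} For $f=\varphi_\Re$ one has $\int f\,\mathrm{d}\mu_n=0$, $g=\frac12(s+\bar s)$ with $s=\sum_k z_k$, and $h=\frac12 s$. Then $r=0$, $u=\frac12\bar s$, and
\[
\operatorname{Var}_{\mu_n}(f)=\tfrac12\int|s|^2\mathrm{d}\mu_n .
\]
The center-of-mass coordinate of $\mu_n$ is exactly Gaussian with $\int|s|^2\mathrm{d}\mu_n=1$. This is a known factorization of the Ginibre gas: translating all particles leaves the Vandermonde unchanged, and the quadratic weight splits as $n|z|^2=|s|^2+n|z-\frac{s}{n}\mathbf 1|^2$. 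Thus $\operatorname{Var}_{\mu_n}(\varphi_\Re)=\frac12$, while $\frac12\mathcal{E}_{\mu_n}(\varphi_\Re,\varphi_\Re)=\frac12\cdot\frac1n\cdot n=\frac12$. The same holds for $\varphi_\Im$ and for any $a\varphi_\Re+b\varphi_\Im$, by rotation invariance and since equality holds on this two-dimensional real subspace.

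\textbf{Main obstacle.} I expect the functional-analytic justification to be the delicate part:
\begin{itemize}
\item checking that $F=uV_n$ lies in the domain of $\bar\partial$ on $L^2(\gamma_n)$, even though $h$ is only an $L^2(\mu_n)$-limit of holomorphic functions and $u$ is not compactly supported;
\item checking that the Gaussian estimate applies to such $F$.
\end{itemize}
I would first handle this by working degree by degree: truncate to homogeneous pieces via the rotation action, where everything reduces to polynomial and Hermite computations. Then I would pass to the limit using Lemma~\ref{le:gradient:closure:domain} and density of $\mathcal{C}^\infty_c$ in $H^1_{\mathbb{R}}(\mu_n)$.
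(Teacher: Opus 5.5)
Your proposal is correct and is essentially the paper's proof. It uses the same four ingredients: the Vandermonde transform of Lemma~\ref{le:grad}, the Gaussian $\bar\partial$ gap of Lemma~\ref{le:dbar-gap}, the divisibility Lemma~\ref{le:divisibility}, and the half-distance estimate of Lemma~\ref{le:projection-geometry}. The only difference is that you perform the holomorphic projection in $L^2(\mu_n)$ before transferring to $L^2(\gamma_n)$, whereas the paper projects after the transfer.

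The domain issue you flag as the main obstacle does not actually arise. Since $V_nf-F=V_n\bigl(\int f\,\mathrm{d}\mu_n+h\bigr)$ lies in $\mathcal{H}_n$ and $F\perp\mathcal{H}_n$, you have $F=V_nf-\operatorname{proj}_{\mathcal{H}_n}(V_nf)$. So, as the paper does, you can apply the Gaussian estimate directly to the smooth compactly supported function $V_nf$. This gives $n\|F\|_{L^2(\gamma_n)}^2\le\sum_k\|\bar\partial_{z_k}(V_nf)\|_{L^2(\gamma_n)}^2$ with no degree-by-degree truncation needed.
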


The proof of Theorem \ref{th:main}, given in Section \ref{se:proof:th:main}, is
based on a Vandermonde or ground-state transform, a holomorphic projection, a
Gaussian complex-analytic inequality which is a special case of the
$\bar\partial$ Hörmander--Berndtsson estimate, a divisibility property, and a
holomorphic--antiholomorphic projection geometry lemma whose real-valued
consequence is a half-distance estimate. This approach is suitable for the
extension, beyond the quadratic potential case, to rotationally invariant
additive plurisubharmonic perturbations of the quadratic potential, see
Theorem \ref{th:nonquad}.

We also give four alternative proofs of Theorem \ref{th:main}, more Gaussian,
two using the integrated Bakry--Émery curvature, see Section \ref{se:BE}, one
using a complex operator and a spectral approach in Section
\ref{se:proof:th:main:spectral}, and one using a Hermite--Slater polynomial
expansion in Section \ref{se:proof:th:main:polynomials}.

The mechanism behind the proof of Theorem~\ref{th:main} is stronger than the
usual correlation-kernel identities associated with the determinantal
structure of the log-gas that appear in random matrix theory, in the sense
that the result goes strictly beyond additive observables also known as linear
statistics. A key point is that the multiplication of the Vandermonde
determinant by a symmetric function preserves its antisymmetry. In quantum
mechanics, this is precisely the fermionic antisymmetry encoded by Slater
determinants built from Bargmann--Fock orbitals.

For $n=1$, $\mu_1=\gamma_1$ is strongly log-concave. Let us consider now the
case $n\geq2$. The probability measure $\mu_n$ is not log-concave, in other
words the energy $E_n$ is not convex. It is built from the planar Coulomb
kernel $-\log\left|\cdot\right|$ which is not even subharmonic. Actually it is
superharmonic, indeed, let us recall the fundamental equation
$\Delta\log\left|\cdot\right|=2\pi\delta_0$ in the sense of distributions. In
the same vein, the infimum of the eigenvalues of the Hessian matrix of $E_n$
is $-\infty$, and thus the Bakry--Émery curvature is $-\infty$, making Theorem
\ref{th:main} particularly interesting. Still the Poincaré inequality remains
equivalent to the integrated $\Gamma_2$ criterion, see
\cite{BakryGentilLedoux}, or \cite{AneEtAlLSI}.

The Poincaré inequality is a classical concept in probabilistic functional
analysis, which goes back to \cite{zbMATH02695433,Poincare1890}, and which has
plenty of applications, see \citet[Ch.~4]{BakryGentilLedoux} and references
therein. In the present note, we use the Hörmander--Berndtsson estimate for
the standard Gaussian distribution, see \citet[Th.~1.3]{BerndtssonThingsDbar},
\citet[Th.~1.1.1]{BerndtssonDbarMethods}, inspired by
\citet[Th.~2.2.1]{HormanderL2Dbar},
\citet[Ch.~4]{HormanderComplexAnalysisSeveralVariables}. Interestingly, and as
noticed in \citet[p.~402]{CorderoErausquinBerndtssonPrekopa}, a real case
version of this estimate is nothing else but the Poincaré-type inequality for
log-concave measures obtained in \citet[Th.~4.1]{BrascampLiebExtensions}.

Dimension-free functional inequalities for weakly interacting Boltzmann--Gibbs
measures have a long history, see \cite{LedouxUnboundedSpinRevisited} for a
concise account of the perturbative theory for unbounded spin systems, and
\cite*{GuillinLiuWuZhang} for explicit uniform Poincaré and logarithmic Sobolev
inequalities for mean-field particle systems. The singular Coulomb interaction
considered here falls outside these regular perturbative frameworks, since its
Hessian is unbounded from below.

The functional inequalities obtained in this article can be used to get
concentration of measure for Lipschitz functions, which can be compared for
instance to the results of \cite*{ChafaiHardyMaida}, and the quantification of
the cutoff phenomenon, as in \cite*{ChafaiFathiCutoff}.

\subsection{Real case}

Let $\gamma_n^{\mathbb{R}}$ be the product real Gaussian distribution
$\mathcal{N}_{\mathbb{R}^n}(0,1/n)$ on $\mathbb{R}^n$ with density
$x\mapsto(\frac{n}{2\pi})^{\frac{n}{2}}\mathrm{e}^{-\frac{n}{2}|x|^2}$. In
random matrix theory, the analogue of $\mu_n$ on $\mathbb{R}^n$ is known as
the Gaussian Unitary Ensemble (GUE) log-gas, given by
\begin{equation}\label{eq:muR}
  \mathrm{d}\mu_n^{\mathbb{R}}
  =\frac{1}{c_n^{\mathbb{R}}}\mathrm{e}^{-\frac{n}{2}\sum_{k=1}^n|x_k|^2}\prod_{j<k}|x_j-x_k|^2\mathrm{d}x
  =\frac{1}{c_n^{'\mathbb{R}}}\prod_{j<k}|x_j-x_k|^2\mathrm{d}\gamma_n^{\mathbb{R}}.
\end{equation}
The Poincaré inequality for $\mu_n^{\mathbb{R}}$ on symmetric observables is
equivalent to the Poincaré inequality for its normalized restriction
$\widetilde{\mu}_n^{\mathbb{R}}$ on the simplex
$\{x\in\mathbb{R}^n:x_1\leq\cdots\leq x_n\}$, and it turns out that
$\widetilde{\mu}_n^{\mathbb{R}}$ is strongly log-concave, due to the fact that
$u\mapsto-\log(u)$ is convex on $\mathbb{R}_+$. Therefore, the Bakry--Émery
criterion applies, and provides the analogue of Theorem \ref{th:main}, see
\cite{ChafaiLehec}, where an optimal logarithmic Sobolev inequality is also
obtained. We also refer to \cite{RosenzweigSerfatyModulatedLSI} for open
problems in the same spirit beyond this exactly solvable specific model.

\subsection{Nonsymmetric case}

The symmetry hypothesis in Theorem \ref{th:main} cannot be removed. Indeed,
for the nonsymmetric test function $f(z)=\Re z_1$, we get
\begin{equation}
  \mathcal{E}_{\mu_n}(f,f)=\frac{1}{n}\int_{\mathbb{C}^n}|\nabla f|^2\mathrm{d}\mu_n=\frac{1}{n}
  \quad\text{while}\quad
  \int_{\mathbb{C}^n}f\mathrm{d}\mu_n
  =-\int_{\mathbb{C}^n}f\mathrm{d}\mu_n
  =0,
\end{equation}
and, following for instance \cite[Cor.~6.2]{ChafaiAspects},
\begin{equation}
  \int_{\mathbb{C}^n}f^2\mathrm{d}\mu_n
  =\frac{1}{2n}\int_{\mathbb{C}^n}|z|^2\mathrm{d}\mu_n(z)
  =\frac{1}{2n}\frac{n+1}{2}=\frac{n+1}{4n}.
\end{equation}
This shows that the best constant in the Poincaré inequality for $\mu_n$
without the symmetry constraint on the test function is larger than or equal
to $(n+1)/4$, which is $>1/2$ as soon as $n\geq2$.

\subsection{Linear statistics}\label{se:linear-statistics}
Recall that $\mathbb{C}^k\equiv\mathbb{R}^{2k}$ for all $k\geq1$. Fix
$g\in\mathcal{C}^1(\mathbb{C},\mathbb{C})$ such that
\begin{equation}
  \|g\|_{\mathrm{Lip}}=\sup_{z\neq z'}\frac{|g(z)-g(z')|}{|z-z'|}
  =\sup_{z}\|D g (z)\|_{\mathrm{op}}<\infty.
\end{equation}
Let $\nu^g_n$ be the image measure of $\mu_n$ by
$z\in\mathbb{C}^n\mapsto G(z)=g(z_1)+\cdots+g(z_n)\in\mathbb{C}$. For every
$F\in\mathcal{C}_c^\infty(\mathbb{C},\mathbb{R})$, the function $F\circ G$ is
symmetric and in $H^1_{\mathbb{R}}(\mu_n)$ by a standard cutoff approximation.
Moreover, for all $1\leq j\leq n$ and $z\in\mathbb{C}^n$,
$\nabla_{z_j}(F\circ G)(z)=Dg(z_j)^{\mathsf T}\nabla F(G(z))$, hence
\begin{equation}
  |\nabla(F\circ G)(z)|^2
  \leq(\|Dg(z_1)\|_{\mathrm{op}}^2+\cdots+\|Dg(z_n)\|_{\mathrm{op}}^2)
  |\nabla F(G(z))|^2
  \leq n\|g\|_{\mathrm{Lip}}^2|\nabla F(G(z))|^2.
\end{equation}
Now, the Poincaré inequality of Theorem \ref{th:main} for $f=F\circ G$ gives
\begin{equation}
  \operatorname{Var}_{\nu^g_n}(F)
  =\int_{\mathbb{C}}F^2\mathrm{d}\nu^g_n-\Bigl(\int_{\mathbb{C}}F\mathrm{d}\nu^g_n\Bigr)^2
  \leq\frac{\|g\|_{\mathrm{Lip}}^2}{2}\int_{\mathbb{C}}|\nabla F|^2\mathrm{d}\nu^g_n.
\end{equation}
Hence $\nu_n^g$ satisfies a Poincaré inequality for the Euclidean gradient
with constant $\|g\|_{\mathrm{Lip}}^2/2$. The factor $1/2$ is optimal since
if $g=\mathrm{id}$, then $G(z)=z_1+\cdots+z_n$,
$\nu^g_n=\mathcal{N}_{\mathbb{C}}(0,1)$, and $\|g\|_{\mathrm{Lip}}=1$.
% , while if $g=\Re$ or $g=\Im$, then $G=\varphi_{\mathbb{R}}$ or
% $G=\varphi_{\Im}$ respectively, while
% $\nu^g_n=\mathcal{N}_{\mathbb{R}}(0,1/2)$ and $\|g\|_{\mathrm{Lip}}=1$.

\subsection{Dynamics}

Following \cite{BolleyChafaiFontbona}, let us consider the overdamped Langevin
process associated to the Boltzmann--Gibbs measure
$\mu_n\propto\mathrm{e}^{-\beta_n E_n}$ solving the stochastic differential
equation
\begin{equation}\label{eq:intro-sde}
  \mathrm{d}Z_t^n=\sqrt{\alpha_n\frac{2}{\beta_n}}\mathrm{d}B^n_t
  -\alpha_n\nabla E_n(Z_t^n)\mathrm{d}t,
\end{equation}
where $\alpha_n>0$ is a speed parameter\footnote{Indeed ${(Z^n_t)}_{t\geq0}$
  has the law of the time-changed process ${(Y^n_{\alpha_n t})}_{t\geq0}$
  where ${(Y^n_t)}_{t\geq0}$ solves \eqref{eq:intro-sde} with $\alpha_n=1$.},
and $B^n$ is a standard Brownian motion on
$\mathbb{C}^n\equiv\mathbb{R}^{2n}$. We have
\begin{equation}
  -\nabla_{z_j}E_n(z)
  =-\frac{2}{n}z_j+\frac{2}{n^2}\sum_{k\neq j}\frac{z_j-z_k}{|z_j-z_k|^2}.
\end{equation}
We recognize an Ornstein--Uhlenbeck linear drift plus a Coulomb planar
repulsion. The coordinates $Z^{n,j}_t$, $1\leq j\leq n$ of $Z^n_t$ can be seen
as an interacting particle system consisting of $n$ confined planar Brownian
motions experiencing quadratic confinement and logarithmic or Coulomb
repulsion. Also \eqref{eq:intro-sde} is equivalent to the system of planar
stochastic differential equations
\begin{equation}
  \mathrm{d}Z^{n,j}_t
  =\sqrt{\alpha_n\frac{2}{\beta_n}}\mathrm{d}B^{n,j}_t
  -\alpha_n\Bigl(\frac{2}{n}Z^{n,j}_t+\frac{2}{n^2}\sum_{k\neq j}\frac{Z^{n,k}_t-Z^{n,j}_t}{|Z^{n,k}_t-Z^{n,j}_t|^2}\Bigr)\mathrm{d}t,
  \quad 1\leq j\leq n.
\end{equation}
The drift is singular when $Z^n$ reaches $\mathbb{C}^n_{=}$. We could use the
fact that the coefficients are in $L^2(\mu_n)$ since $1/|z_j-z_k|$ is in
$L^2(\mu_n)$, but we follow another route via Dirichlet forms. First,
following \cite{BolleyChafaiFontbona}, for every initial condition in
$\mathbb{C}^n_{\neq}$, the stochastic differential equation
\eqref{eq:intro-sde} is globally well posed and nonexplosive. Its unique
invariant probability measure is our Boltzmann--Gibbs measure $\mu_n$, and the
Markov process $Z^n$ is reversible. The Markov semigroup of $Z^n$ preserves
symmetry. The natural differential operator associated to $Z^n$ is
\begin{equation}
  A_n=\alpha_n\Bigl(\frac{1}{\beta_n}\Delta-\nabla E_n\cdot\nabla\Bigr)
  =\frac{\alpha_n}{\beta_n}\frac{\mathrm{div}(\rho_n\nabla)}{\rho_n}
  \quad\text{where}\quad
  \rho_n\propto\mathrm{e}^{-\beta_nE_n},
\end{equation}
defined on $\mathcal{C}^\infty_c(\mathbb{C}^n_{\neq},\mathbb{C})$. By
convention, we have $A_nf=A_n\Re f+\mathrm{i}A_n\Im f$. By integration by
parts,
\begin{equation}
  \langle -A_nf,g\rangle_{L^2(\mu_n)}
  =-\int_{\mathbb{C}^n}A_nf\overline{g}\,\mathrm{d}\mu_n
  =\frac{\alpha_n}{\beta_n}\int_{\mathbb{C}^n}\nabla f\cdot\overline{\nabla g}\,\mathrm{d}\mu_n
  =\mathcal{E}_n(f,g)
\end{equation}
for all $f,g\in\mathcal{C}^\infty_c(\mathbb{C}^n_{\neq},\mathbb{C})$. The
sesquilinear form $(f,g)\mapsto\mathcal{E}_n(f,g)$ is a densely defined
symmetric pre-Dirichlet form associated with the pregenerator $A_n$. Its
closure is a regular, strongly local, conservative Dirichlet form, see
\cite[Ch.~2, 4, 5, and 7]{FukushimaOshimaTakeda}, or \cite{Ouhabaz} for a more
accessible presentation. Indeed, the constants belong to $H^1(\mu_n)$ by
radial cutoff approximation, and have zero energy. The form domain is the
closure of $\mathcal{C}^\infty_c(\mathbb{C}^n_{\neq},\mathbb{C})$ for the
graph norm
\begin{equation}\label{eq:graph:norm:En}
  f\mapsto\sqrt{\|f\|_{L^2(\mu_n)}^2+\mathcal{E}_n(f,f)},
\end{equation}
which is equivalent to \eqref{eq:graphnorm} since $\alpha_n/\beta_n$ is a
constant. This closure is equal to that of
$\mathcal{C}^\infty_c(\mathbb{C}^n,\mathbb{C})$, denoted $H^1(\mu_n)$, see
Lemma \ref{le:closure}. This space $H^1(\mu_n)$ is the complexification of
$H^1_{\mathbb{R}}(\mu_n)$. Let $X^n$ be its associated Hunt diffusion. For
quasi-every initial point in $\mathbb{C}^n_{\neq}$, the process $X^n$ solves
the martingale problem for $A_n$. By Lemma \ref{le:capacity}, the set
$\mathbb{C}^n_{=}$ is polar for $X^n$.

Since $\mathcal{E}_n$ is Hermitian and non-negative, the unbounded operator
$-A_n$ in $L^2(\mu_n)$ is symmetric and nonnegative. Its Friedrichs extension
$F_n$ is the nonnegative self-adjoint operator associated, by the first
representation theorem, with the closure of $\mathcal{E}_n$. See
\cite[Ch.~VI]{Kato}.

By the correspondence between diffusion martingale problems and weak solutions
of stochastic differential equations, see \cite[Chs.~4--5]{EthierKurtz}, and
by weak uniqueness for \eqref{eq:intro-sde}, the stochastic processes $X^n$
and $Z^n$ have the same law for quasi-every initial condition in
$\mathbb{C}^n_{\neq}$. Hence, their Markov semigroups coincide as operators on
$L^2(\mu_n)$. This semigroup is ${(\mathrm{e}^{-tF_n})}_{t\geq0}$. The domain
of its infinitesimal generator $L_n=-F_n$ is
\begin{equation}
  \mathcal{D}(L_n)
  =\Bigl\{f\in H^1(\mu_n):\exists a\in L^2(\mu_n),
  \forall\varphi\in H^1(\mu_n),
  -\langle a,\varphi\rangle_{L^2(\mu_n)}=\mathcal{E}_n(f,\varphi)\Bigr\}.
\end{equation}
For all $f\in\mathcal{D}(L_n)$, the function $a\in L^2(\mu_n)$ in the
definition above is unique and we have $L_nf=a$. The operator domain
$\mathcal{D}(F_n)=\mathcal{D}(L_n)$ is smaller than the form domain
$\mathcal{D}(F_n^{1/2})=H^1(\mu_n)$.

It is natural to ask about the status of polynomials in $z$ and $\bar{z}$ as
test functions. By using a standard approximation by cutoff, it can be shown
that the algebra of polynomials $\mathbb{C}[z,\bar{z}]$ is included in
$H^1(\mu_n)$. Moreover, using the integrability properties of the density of
$\mu_n$ and integration by parts, it can be shown that
$\mathbb{C}[z,\bar{z}]\subset\mathcal{D}(L_n)$.

Following \cite[Ch.~4]{BakryGentilLedoux}, the Poincaré inequality
provided by Theorem \ref{th:main} is equivalent to stating that nonnegative
self-adjoint operator $-L_n=F_n$ has a spectral gap equal to
\begin{equation}
  2n\frac{\alpha_n}{\beta_n}=2\frac{\alpha_n}{n},
\end{equation}
when restricted to symmetric observables\footnote{In other words, for
  unlabeled particles.}. We can check by direct computation and approximation
that the center-of-mass observables $\varphi_\Re$ and $\varphi_\Im$, which are
symmetric and in $\mathcal{C}^\infty(\mathbb{C}^n,\mathbb{R})$, belong to
$\mathcal{D}(L_n)$ and are eigenfunctions of $-L_n$ associated to the spectral
gap eigenvalue, namely
\begin{equation}
  L_n\varphi_\Re=-2\frac{\alpha_n}{n}\varphi_\Re
  \quad\text{and}\quad
  L_n\varphi_\Im=-2\frac{\alpha_n}{n}\varphi_\Im.
\end{equation}
Here both the Laplacian term and the interaction term vanish due to the
linearity and symmetry.

The special choice of speed $\alpha_n=n$ gives a dimension-free spectral gap
equal to $2$.

The dependence on $n$ of the optimal Poincaré constant for planar Coulomb
dynamics, and in particular the possibility of convergence to equilibrium at a
dimension-free rate, is an old problem, see
\cite*{BolleyChafaiFontbona,LuMattingly,RosenzweigSerfatyModulatedLSI}.
Theorem~\ref{th:main} gives a sharp answer for the natural symmetric
observables of the complex Ginibre log-gas. In particular, although the
singular interaction destroys convexity, it does not increase the Poincaré
constant above the value supplied by the center-of-mass modes. The Vandermonde
transform exploits a Gaussian complex-analytic structure which is not in the
spirit of simply bounding the Hessian of the energy. We refer to
\cite{DePhilippisShenfeld} for another probabilistic functional-analytic
aspect of the complex Ginibre ensemble related to subharmonicity.

Beware that $Z^n$ is not the Dyson--Ornstein--Uhlenbeck process for complex
Ginibre matrices, and the eigenvalue dynamics is not decoupled from the
eigenvectors, see \cite{BourgadeDubachOverlaps}.

\subsection{Gaussian factorization}

The following theorem is the two-dimensional case of
\cite[Th.~6.1]{ChafaiAspects}, but the decoupling was essentially already
observed in \cite{Laughlin}. The GUE version is in \cite{ChafaiLehec}. It
reveals the presence of a Gaussian factor together with a recentered factor.
It exploits the quadratic nature of the potential as well as the translation
invariance of the Vandermonde determinant $V_n$. Indeed $V_n(z)$ depends on
the shape of the cloud of particles $z_1,\ldots,z_n$ and not on its location.
Such a factorization or splitting plays an essential role for the cutoff
phenomenon in \cite{BoursierChafaiLabbe} and \cite{ChafaiFathiCutoff}, and in
the Gibbs conditioning for large deviation principles in
\cite*{ChafaiFerreStoltz}.

\begin{theorem}[Equilibrium factorization]\label{th:split}
  Consider the complex line $L=\mathbb{C}(1,\ldots,1)\subset\mathbb{C}^n$, and
  its orthogonal complex hyperplane $L^\perp\subset\mathbb{C}^n$. Let
  $\varphi_n(z)=c_n^{-1}\mathrm{e}^{-n|z|^2}|V_n(z)|^2$, $z\in\mathbb{C}^n$,
  be the density of the complex Ginibre log-gas $\mu_n$. Then
  $z=\operatorname{proj}_L(z)+\operatorname{proj}_{L^\perp}(z)=S(z)v+W$ where
  $S(z)=z_1+\cdots+z_n$, $v=(1/n,\ldots,1/n)$ and $W=z-S(z)v$, for all
  $z\in\mathbb{C}^n$, and
  \begin{equation}
    \varphi_n(z)=\mathrm{e}^{-|S(z)|^2}\varphi_n(W).
  \end{equation}
  In probabilistic terms, if $\mathbf{Z}\sim\mu_n$ then
  $\mathbf{S}=\sum_{k=1}^n\mathbf{Z}_k$ and
  $\mathbf{W}=\mathbf{Z}-(\mathbf{S},\ldots,\mathbf{S})/n$ are independent
  with $\mathbf{S}\sim\mathcal{N}_{\mathbb{C}}(0,1)$ while $\mathbf{W}$ has
  density proportional to $\varphi_n$ for the Lebesgue measure on $L^\perp$.
  Furthermore%\footnote{If $n=1$, then $\mathbf{W}=0$ and $\mathbf{R}=0$ deterministically.}
  \[
    \mathbf{R}=n|\mathbf{W}|^2\sim\mathrm{Gamma}(\kappa,1)
    \quad\text{where}\quad
    \kappa=\frac{(n-1)(n+2)}{2}
  \]
  when $n\geq2$, while $\mathbf{S}=\mathbf{Z}$, $\mathbf{W}=0$, and
  $\mathbf{R}=0$ when $n=1$.
\end{theorem}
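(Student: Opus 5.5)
The plan is to prove the density identity by two elementary facts, then read off the probabilistic statements by a linear change of variables, and finally get the law of $\mathbf{R}$ from homogeneity.

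\textbf{Density identity.} The decomposition $z=S(z)v+W$ is the orthogonal splitting along $L\oplus L^\perp$. Indeed, $W\perp(1,\ldots,1)$ because $\sum_k W_k=S(z)-S(z)=0$. By Pythagoras,
\[
|z|^2=|S(z)v|^2+|W|^2=\frac{|S(z)|^2}{n}+|W|^2,
\]
so $\mathrm{e}^{-n|z|^2}=\mathrm{e}^{-|S(z)|^2}\mathrm{e}^{-n|W|^2}$. Next, $V_n$ is translation invariant: $V_n(z+c(1,\ldots,1))=V_n(z)$, since it depends only on the differences $z_k-z_j$. Hence $V_n(z)=V_n(W)$. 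Combining the two facts gives $\varphi_n(z)=\mathrm{e}^{-|S(z)|^2}\varphi_n(W)$.

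\textbf{Independence and the law of $\mathbf{S}$.} Use the orthonormal coordinates $z\mapsto(s,w)$ with $s=S(z)/\sqrt n$, the coordinate along the unit vector $(1,\ldots,1)/\sqrt n$, and $w\in L^\perp\cong\mathbb{C}^{n-1}$. This is a unitary change of variables, so it has Jacobian $1$. In these coordinates the density factorizes as $\mathrm{e}^{-n|s|^2}\varphi_n(w)$, which gives independence. The law of $\mathbf{S}=\sqrt n\,s$ has density proportional to $\mathrm{e}^{-|S|^2}$ on $\mathbb{C}$, that is, $\mathcal{N}_{\mathbb{C}}(0,1)$. The law of $\mathbf{W}$ has density proportional to $\varphi_n$ restricted to $L^\perp$. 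The case $n=1$ is immediate, since then $L^\perp=\{0\}$.

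\textbf{Law of $\mathbf{R}$.} Write the density of $\mathbf{W}$ on $L^\perp$ as $\mathrm{e}^{-n|w|^2}|V_n(w)|^2$. The polynomial $V_n$ is homogeneous of degree $n(n-1)/2$, so $|V_n|^2$ is homogeneous of degree $n(n-1)$. Pass to polar coordinates in the real space $L^\perp\cong\mathbb{R}^{2(n-1)}$, writing $w=r\theta$ with $|\theta|=1$. The density of $r=|\mathbf{W}|$ is proportional to
\[
r^{n(n-1)}\mathrm{e}^{-nr^2}\,r^{2(n-1)-1}.
\]
The angular integral is a finite positive constant, because $V_n$ does not vanish identically on $L^\perp$. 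Substituting $R=nr^2$, so that $\mathrm{d}R\propto r\,\mathrm{d}r$, the density of $\mathbf{R}$ becomes proportional to
\[
R^{\frac{n(n-1)+2(n-1)}{2}-1}\mathrm{e}^{-R}=R^{\kappa-1}\mathrm{e}^{-R},\qquad \kappa=\frac{(n-1)(n+2)}{2}.
\]
This is the $\mathrm{Gamma}(\kappa,1)$ density.

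The only point needing care is the exponent bookkeeping in this last step: the real dimension $2(n-1)$ of $L^\perp$ plus the degree $n(n-1)$ of $|V_n|^2$. Everything else is routine.
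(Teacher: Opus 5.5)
Your proposal is correct and follows essentially the same route as the paper. It uses Pythagoras along $L\oplus L^\perp$ and translation invariance of $V_n$ for the density identity, a unitary (constant-Jacobian) change of variables for independence, and polar coordinates on $L^\perp$ with the homogeneity of $V_n$ to get the shape parameter $\frac{n(n-1)}{2}+(n-1)=\kappa$; you simply spell out the exponent bookkeeping and the normalization of $\mathbf{S}$ that the paper leaves implicit.
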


\begin{proof}
  A unit norm vector of $L$ is
  $u=(1/\sqrt{n},\ldots,1/\sqrt{n})\in\mathbb{C}^n$. We have
  $\operatorname{proj}_L(z)=\langle z,u\rangle u=S(z)v$. The Pythagoras
  identity gives $|z|^2=|S(z)|^2|v|^2+|z-S(z)v|^2=|S(z)|^2/n+|W|^2$. On the
  other hand, the shift invariance of $V_n$ along the line $L$ gives
  $V_n(z)=V_n(W)$ since $z-W=S(z)v\in L$. The independence follows from the
  factorization and the fact that the Jacobian is constant (linearity).

  For the last part, since $V_n$ is homogeneous of degree
  $d_n=\frac{n(n-1)}{2}$, the polar coordinates on the complex
  $(n-1)$-dimensional space $L^\perp$ give
  $\mathbf{R}=n|\mathbf{W}|^2\sim\mathrm{Gamma}(d_n+n-1,1)=\mathrm{Gamma}(\kappa,1)$.
\end{proof}

The following is the dynamical counterpart of Theorem \ref{th:split}. It is an
analogue for the overdamped Langevin process associated to the complex Ginibre
log-gas of what is done in \cite{ChafaiFathiCutoff} and
\cite{BoursierChafaiLabbe} for the Dyson--Ornstein--Uhlenbeck process
associated to the GUE.

\begin{theorem}[Dynamical factorization]
  \label{th:split:dynamical}
  Assume that $n\geq2$. Let $v$, $L$, and $L^\perp$ be as in Theorem
  \ref{th:split}. Consider the processes ${(S_t)}_{t\geq0}$ and
  ${(W_t)}_{t\geq0}$ with state space $\mathbb{C}$ and $L^\perp$ defined by
  \[
    S_tv=\operatorname{proj}_L(Z_t^n)=\Bigl(\sum_{j=1}^n Z_t^{n,j}\Bigr)v
    \quad\text{and}\quad
    W_t=\operatorname{proj}_{L^\perp}(Z_t^n)=Z_t^n-\frac{S_t}{n}(1,\ldots,1)
  \]
  where $(Z_t^n)_{t\geq0}$ is the solution of \eqref{eq:intro-sde}. Then there
  exist independent Brownian motions $B^S$ on $\mathbb{C}$ and $B^W$ on
  $L^\perp$ such that ${(S_t)}_{t\geq0}$ and ${(W_t)}_{t\geq0}$ solve the
  stochastic differential equations
  \begin{equation}\label{eq:center-relative-sde}
    \mathrm{d}S_t=\sqrt{\frac{2\alpha_n}{n}}\mathrm{d}B_t^S-\frac{2\alpha_n}{n}S_t\mathrm{d}t
    \quad\text{and}\quad
    \mathrm{d}W_t
    =
    \frac{\sqrt{2\alpha_n}}{n}\,\mathrm{d}B_t^W
    -\frac{2\alpha_n}{n}W_t\,\mathrm{d}t
    +\frac{2\alpha_n}{n^2}b(W_t)\,\mathrm{d}t,
  \end{equation}
  where $b_j(w)=\sum_{k\neq j}\frac{w_j-w_k}{|w_j-w_k|^2}$. In particular
  ${(S_t)}_{t\geq0}$ is a complex Ornstein--Uhlenbeck process and the
  transition semigroup of $Z^n$ is the product of the semigroups of
  ${(S_t)}_{t\geq0}$ and ${(W_t)}_{t\geq0}$. In particular, ${(S_t)}_{t\geq0}$
  and ${(W_t)}_{t\geq0}$ are independent whenever $S_0$ and $W_0$ are
  independent. Moreover, if we define
  \begin{equation}
    U_t=|S_t|^2,
    \quad
    R_t=n|W_t|^2
    =\sum_{1\leq j<k\leq n}|Z_t^{n,j}-Z_t^{n,k}|^2,
    \quad
    \kappa=\frac{(n-1)(n+2)}{2}.
  \end{equation}
  then there exist independent real Brownian motions $\beta^S$ and $\beta^R$
  such that ${(U_t)}_{t\geq0}$ and ${(R_t)}_{t\geq0}$ solve
  \begin{equation}
    \mathrm{d}U_t
    =
    \frac{4\alpha_n}{n}(1-U_t)\mathrm{d}t +2\sqrt{\frac{2\alpha_n}{n}U_t}\mathrm{d}\beta_t^S
    \quad\text{and}\quad
    \mathrm{d}R_t
    =
    \frac{4\alpha_n}{n}(\kappa-R_t)\mathrm{d}t +2\sqrt{\frac{2\alpha_n}{n}R_t}\mathrm{d}\beta_t^R.
  \end{equation}
  In particular ${(U_t)}_{t\geq0}$ and ${(R_t)}_{t\geq0}$ are
  Cox--Ingersoll--Ross processes with invariant distributions
  $\operatorname{Gamma}(1,1)$ and $\operatorname{Gamma}(\kappa,1)$,
  respectively. Furthermore, in the stationary case where $Z_0^n\sim\mu_n$,
  the processes $(S_t)_{t\geq0}$ and $(W_t)_{t\geq0}$, and hence
  ${(U_t)}_{t\geq0}$ and ${(R_t)}_{t\geq0}$, are independent and stationary.
\end{theorem}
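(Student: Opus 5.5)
The plan is to apply Itô calculus to the linear maps $\operatorname{proj}_L$ and $\operatorname{proj}_{L^\perp}$ along the solution of \eqref{eq:intro-sde}, and then to the quadratic functionals $|S|^2$ and $n|W|^2$. Write \eqref{eq:intro-sde} coordinatewise with $\sqrt{2\alpha_n/\beta_n}=\sqrt{2\alpha_n}/n$ and drift $-\alpha_n(\frac{2}{n}Z^{n,j}-\frac{2}{n^2}b_j(Z^n))$. Summing over $j$ gives the drift of $S_t$. The interaction terms cancel, since $b_j$ is a sum of antisymmetric pair terms, so $\sum_j b_j=0$. The noise is $\frac{\sqrt{2\alpha_n}}{n}\sum_j B^{n,j}_t$, and $\frac1{\sqrt n}\sum_jB^{n,j}$ is a standard complex (planar) Brownian motion $B^S$. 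This yields $\mathrm{d}S_t=\sqrt{2\alpha_n/n}\,\mathrm{d}B^S_t-\frac{2\alpha_n}{n}S_t\,\mathrm{d}t$.

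For $W_t=\operatorname{proj}_{L^\perp}Z^n_t$, I will use three facts. First, $b$ takes values in $L^\perp$, again because $\sum_jb_j=0$. Second, $b(Z)=b(W)$ by translation invariance along $L$. Third, $B^W=\operatorname{proj}_{L^\perp}B^n$ is a standard Brownian motion on $L^\perp$ that is independent of $B^S$, since orthogonal projections of a Gaussian process onto orthogonal subspaces are independent. This gives \eqref{eq:center-relative-sde}. Since the two SDEs are decoupled (each drift depends only on its own process) and are driven by independent noises, pathwise uniqueness for each of them gives the product structure of the semigroup and the independence when $S_0$ and $W_0$ are independent. Well-posedness of the $W$ equation is inherited from the well-posedness of \eqref{eq:intro-sde} quoted from \cite{BolleyChafaiFontbona}. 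In the stationary case, Theorem~\ref{th:split} gives the independence of $S_0$ and $W_0$, and stationarity then follows from the invariance of $\mu_n$.

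Next apply Itô's formula to $U=|S|^2$ with $S$ viewed in $\mathbb{R}^2$. The generator of $S$ is $\frac{\alpha_n}{n}\Delta-\frac{2\alpha_n}{n}s\cdot\nabla$, and $\Delta|s|^2=4$. So the drift of $U$ is $\frac{4\alpha_n}{n}-\frac{4\alpha_n}{n}U$. The martingale part is $2\sqrt{2\alpha_n/n}\,S\cdot\mathrm{d}B^S$, whose quadratic variation is $\frac{8\alpha_n}{n}U\,\mathrm{d}t$. By Lévy's characterization, $\beta^S=\int U^{-1/2}S\cdot\mathrm{d}B^S$ (taking the convention on $\{U=0\}$ to be harmless, since that set has zero Lebesgue time) is a real Brownian motion, and $U$ solves the stated CIR equation.

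For $R=n|W|^2$ on $L^\perp$, of real dimension $2(n-1)$, the noise coefficient $\sqrt{2\alpha_n}/n$ makes the generator equal to $\frac{\alpha_n}{n^2}\Delta_{L^\perp}$ plus the drift. We have $\Delta_{L^\perp}(n|w|^2)=4n(n-1)$. The OU drift contributes $-\frac{4\alpha_n}{n}R$. The key step, and the main obstacle, is the interaction term $\frac{2\alpha_n}{n^2}\,2n\,w\cdot b(w)$. Here I use the identity $\sum_j w_j\cdot b_j(w)=\sum_{j<k}\frac{(w_j-w_k)\cdot(w_j-w_k)}{|w_j-w_k|^2}=\frac{n(n-1)}{2}$, which is Euler's homogeneity relation for $\log|V_n|$. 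Putting the pieces together, the total drift is
\[
\frac{4\alpha_n(n-1)}{n}+\frac{2\alpha_n(n-1)}{n}-\frac{4\alpha_n}{n}R=\frac{4\alpha_n}{n}\Bigl(\frac{(n-1)(n+2)}{2}-R\Bigr),
\]
which is $\frac{4\alpha_n}{n}(\kappa-R)$ as claimed. The martingale part has quadratic variation $4n^2\frac{2\alpha_n}{n^2}|W|^2\,\mathrm{d}t=\frac{8\alpha_n}{n}R\,\mathrm{d}t$, and Lévy's theorem again produces $\beta^R$. Since $\beta^S$ and $\beta^R$ are built from the independent $B^S$ and $B^W$, they are independent.

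The identity $R=\sum_{j<k}|Z^{n,j}-Z^{n,k}|^2$ is the usual variance formula $n\sum_j|z_j-\bar z|^2=\sum_{j<k}|z_j-z_k|^2$. The invariant laws $\mathrm{Gamma}(1,1)$ and $\mathrm{Gamma}(\kappa,1)$ can be read off from the CIR parameters, and they agree with Theorem~\ref{th:split}. The delicate points are justifying Itô's formula on the singular set, which I will handle by polarity of $\mathbb{C}^n_{=}$ (Lemma~\ref{le:capacity}) or by localization, and the time spent at $0$ when applying Lévy's theorem.
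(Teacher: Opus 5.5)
Your route is the same as the paper's. You sum and project the coordinate equations using $\sum_j b_j=0$ and the translation invariance of $b$, get independent $B^S$ and $B^W$ from the orthogonality of $L$ and $L^\perp$, apply Itô's formula to $|S|^2$ and $n|W|^2$ with the identity $\sum_j w_j\cdot b_j(w)=\frac{n(n-1)}{2}$, and conclude with Lévy's characterization.

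There is one arithmetic slip in your displayed drift of $R$. The interaction contribution is $\frac{2\alpha_n}{n^2}\cdot 2n\cdot\frac{n(n-1)}{2}=2\alpha_n(n-1)$, not $\frac{2\alpha_n(n-1)}{n}$. As written, your left-hand side equals $\frac{6\alpha_n(n-1)}{n}\neq\frac{4\alpha_n}{n}\kappa$ for $n\geq2$. With the correct value you get $\frac{4\alpha_n(n-1)}{n}+2\alpha_n(n-1)=\frac{2\alpha_n(n-1)(n+2)}{n}=\frac{4\alpha_n}{n}\kappa$, as required.

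For the independence of $\beta^S$ and $\beta^R$, it is cleaner to argue as the paper does: use $\langle\beta^S,\beta^R\rangle=0$ and the two-dimensional Lévy characterization. This avoids having to say "built from independent noises" when $S_0$ and $W_0$ are dependent.
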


\begin{proof}
  The interaction is translation invariant and satisfies
  $\sum_{j=1}^n b_j(z)=0$. Summing the coordinate equations of
  \eqref{eq:intro-sde} gives the equation for $S$. Orthogonal projection onto
  $L^\perp$ gives the equation for $W$. The corresponding projected Brownian
  motions are independent. Since the two equations have separated
  coefficients, their transition semigroups form a product. The Itô formula
  applied to $U=|S|^2$ gives the CIR equation for $U$. Applying it to
  $R=n|W|^2$ and using
  \[
    \sum_{j=1}^n
    w_j\cdot
    \sum_{k\neq j}\frac{w_j-w_k}{|w_j-w_k|^2}
    =
    \frac{n(n-1)}{2}
  \]
  gives the CIR SDE for $R$. The martingale parts $M^U$ and $M^R$ in the
  preceding Itô formulas satisfy
  \[
    \mathrm{d}\langle M^U\rangle_t = \frac{8\alpha_n}{n}U_t\,\mathrm{d}t,
    \quad
    \mathrm{d}\langle M^R\rangle_t = \frac{8\alpha_n}{n}R_t\,\mathrm{d}t,
    \quad
    \mathrm{d}\langle M^U,M^R\rangle_t=0.
  \]
  The last identity follows from the orthogonality of the Brownian motions on
  $L$ and $L^\perp$. After normalizing these martingale parts by their
  respective diffusion coefficients, and completing the definition on their
  zero sets in the usual way, the Lévy characterization yields a
  two-dimensional Brownian motion $(\beta^S,\beta^R)$. Its two components are
  independent, and the displayed CIR equations follow.
  % Following for instance \cite{BoursierChafaiLabbe}, the Brownian motions
  % $\beta$ are identified by computing the angle bracket in the Itô formula,
  % by mean of $A_nS$ and $A_nR$, the fact that $S$ and $R-\kappa$ are
  % eigenfunctions of $A_n$, and using the Lévy characterization of Brownian
  % motion among continuous local martingales.
  The invariant Gamma distributions follow from the corresponding
  one-dimensional generalized Laguerre generators
  $\frac{4\alpha_n}{n}\left(x\frac{\mathrm{d}^2}{\mathrm{d}x^2}+(\eta-x)\frac{\mathrm{d}}{\mathrm{d}x}\right)$,
  with $\eta=1$ or $\eta=\kappa$.
\end{proof}

Note that Theorem \ref{th:split} can also be seen as a corollary of Theorem \ref{th:split:dynamical}!

\subsection{Polynomial eigenfunctions}

It is convenient to write $A_n$ in symmetrized form as
\begin{equation}
  A_nf
  =\frac{\alpha_n}{\beta_n}\sum_{j=1}^n\Delta_jf
  -\alpha_n\frac{2}{n}\sum_{j=1}^nz_j\cdot\nabla_j f
  +\alpha_n\frac{2}{n^2}\sum_{j<k}\frac{1}{\bar{z}_j-\bar{z}_k}\cdot(\nabla_j-\nabla_k)f
\end{equation}
for every $f\in\mathcal{C}^\infty(\mathbb{C}^n_{\neq},\mathbb{R})$. This shows
that $A_n$ preserves symmetry. However, and in contrast with its real
counterpart, the Dyson--Ornstein--Uhlenbeck operator, considered in
\cite{ChafaiLehec}, the operator $A_n$ does not preserve symmetric polynomials
in general when $n\geq2$. A counterexample is given by the symmetric
polynomial $f(z)=z_1^2+\cdots+z_n^2$, which is holomorphic, and for which
$A_nf(z)=-\frac{4\alpha_n}{n}f+\frac{4\alpha_n}{n^2}\sum_{j<k}\frac{z_j-z_k}{\bar{z}_j-\bar{z}_k}$
is symmetric but is not a polynomial in $z$ and $\bar{z}$. Nevertheless, only
the interaction part of $A_n$ fails to preserve polynomials, and suggests
considering divisibility.

At this step, it is enlightening to rewrite $A_n$ using the complex
derivatives \eqref{eq:complex-derivatives}, namely
\begin{equation}\label{eq:An:complex}
  \frac{n^2}{2\alpha_n}A_nf
  =2
  \sum_{j=1}^n\partial_{z_j}\bar\partial_{z_j}f
  -n
  \sum_{j=1}^n \left(z_j\partial_{z_j}+\overline z_j\bar\partial_{z_j}\right)f
  +\sum_{j<k}\Bigl( \frac{(\partial_{z_j}-\partial_{z_k})f} {\overline z_j-\overline z_k}
  + \frac{(\bar\partial_{z_j}-\bar\partial_{z_k})f} {z_j-z_k} \Bigr).
\end{equation}
This suggests that $A_nf$ is a polynomial when $f$ is a polynomial and
$(\partial_{z_j}-\partial_{z_k})f$ is divisible by $\bar{z}_j-\bar{z}_k$ and
$(\bar\partial_{z_j}-\bar\partial_{z_k})f$ is divisible by $z_j-z_k$, for all $j<k$.
This leads to the following theorem.

\begin{theorem}[Polynomial eigenfunctions]\label{th:poly}
  Assume that $n\geq2$, and consider the following:
  \begin{itemize}
  \item Two symmetric functions, which are polynomials with respect to $z$ and $\bar{z}$:
    \begin{equation}\label{eq:S-R}
      S(z)=\sum_{j=1}^n z_j
      \quad\text{and}\quad
      R(z)=\sum_{1\leq j<k\leq n}|z_j-z_k|^2=n\sum_{j=1}^n\left|z_j-\frac{S(z)}n\right|^2.
    \end{equation}
  \item The normalized complex Hermite polynomial\footnote{The Rodrigues
      formula is
      $\mathrm H_{a,b}(s,\overline
      s)=\frac{(-1)^{a+b}}{\sqrt{a!b!}}\mathrm{e}^{|s|^2}\bar\partial_s^{a}\partial_s^{b}\mathrm{e}^{-|s|^2}$.}$\mathrm
    H_{a,b}$ of bidegree $(a,b)$ for $\gamma_1$.
  \item The classical generalized Laguerre polynomial\footnote{Defined by the
      Laguerre equation \eqref{eq:laguerre} and normalized by
      $L^{\alpha}_m(0)=\binom{m+\alpha}{m}$. These polynomials are orthogonal
      with respect to the probability distribution
      $\mathrm{Gamma}(\kappa,1)=\frac{1}{\Gamma(\kappa)}x^{\kappa-1}\mathrm{e}^{-x}\mathbf{1}_{x\geq0}\mathrm{d}x$.
      However, they are not orthonormal.} $L_m^{(\kappa-1)}$ of degree $m$,
    with $\kappa=\frac{(n-1)(n+2)}{2}$.
  \end{itemize}
  Then for every $a,b,m\in\mathbb N$, the symmetric polynomial
  \begin{equation}\label{eq:eigenpolynomial}
    P_{a,b,m}(z)
    =\mathrm H_{a,b}\bigl(S(z),\overline{S(z)}\bigr)
    L_m^{(\kappa-1)}\bigl(R(z)\bigr)
  \end{equation}
  is an eigenfunction of $A_n$ in the sense that
  \begin{equation}\label{eq:eigenvalue}
    -A_nP_{a,b,m}  =\lambda_{a,b,m}P_{a,b,m}
    \quad\text{where}\quad
    \lambda_{a,b,m}=2\frac{\alpha_n}{n}(a+b+2m).
  \end{equation}
  Moreover, the family $\{P_{a,b,m}:a,b,m\in\mathbb{N}\}$ is a basis of the
  polynomial algebra $\mathbb C[S,\overline S,R]$. In particular, this algebra
  is invariant under $A_n$ in the sense that
  $A_n\bigl(\mathbb C[S,\overline S,R]\bigr)\subset\mathbb C[S,\overline
  S,R]$. Furthermore, the elements of the family are mutually orthogonal in
  $L^2(\mu_n)$.
\end{theorem}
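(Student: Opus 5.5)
The plan is to work through the change of variables $z\mapsto(S,W)$ of Theorem~\ref{th:split}, and use the splitting of the generator behind Theorem~\ref{th:split:dynamical}. For a function of the form $f(z)=g(S(z),\overline{S(z)})\,h(R(z))$, I will first compute $A_n$ acting on functions of $S$ alone and on functions of $R$ alone, and then on the product.

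For a function $g(S,\overline S)$, the interaction term in \eqref{eq:An:complex} vanishes, because $(\partial_{z_j}-\partial_{z_k})S=0$. A direct computation then gives
\[
A_ng=\frac{2\alpha_n}{n}\bigl(\partial_S\bar\partial_S-S\partial_S-\overline S\bar\partial_S\bigr)g,
\]
which is the complex Ornstein--Uhlenbeck operator for $\mathcal N_{\mathbb C}(0,1)$. Its eigenfunctions are the $\mathrm H_{a,b}$, with eigenvalue $-(a+b)$; this follows from the Rodrigues formula, or from the ladder structure.

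For $h(R)$, I use the Itô computation from the proof of Theorem~\ref{th:split:dynamical}, i.e.\ the identity $\sum_j w_j\cdot b_j(w)=n(n-1)/2$. This gives
\[
A_nh(R)=\frac{4\alpha_n}{n}\bigl(Rh''(R)+(\kappa-R)h'(R)\bigr),
\]
the Laguerre operator. By the Laguerre equation $xL''+(\kappa-x)L'=-mL$ for $L=L_m^{(\kappa-1)}$, the eigenvalue is $-\frac{4\alpha_n}{n}m$.

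The cross term in $A_n(gh)$ is $\frac{2\alpha_n}{\beta_n}\nabla g\cdot\nabla h$. It vanishes because $\nabla S$ and $\nabla\overline S$ lie in the complexified $L$ directions, while $\nabla R$ lies in $L^\perp$, since $R$ depends only on $W$. In complex form, $\sum_j\partial_{z_j}S\,\bar\partial_{z_j}R=\sum_j\bar\partial_{z_j}R=0$ by translation invariance, and similarly with $\partial$ and $\bar\partial$ exchanged. Hence $A_n(gh)=hA_ng+gA_nh$, and \eqref{eq:eigenvalue} follows with $\lambda_{a,b,m}=\frac{2\alpha_n}{n}(a+b)+\frac{4\alpha_n}{n}m$.

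\textbf{Expected main obstacle.} The step I expect to require care is the computation of $A_nh(R)$ with the singular interaction term. I will check directly that
\[
\sum_{j<k}\frac{(\bar\partial_{z_j}-\bar\partial_{z_k})R}{z_j-z_k}
\]
is a constant. Since $\bar\partial_{z_j}R=\sum_{k\neq j}(z_j-z_k)$ up to the convention factor, each pair contributes $n$ times a constant, so the divisibility is exact and no singularity remains. This is also what makes the computation rigorous pointwise on $\mathbb C^n_{\neq}$; by the paper's remark $\mathbb C[z,\bar z]\subset\mathcal D(L_n)$, it then holds in $L^2(\mu_n)$.

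For the basis claim, note that $\mathrm H_{a,b}$ has leading term $S^b\overline S^a$ up to a constant (or the reverse, depending on convention), and $L_m^{(\kappa-1)}$ has degree exactly $m$. The map $(a,b,m)\mapsto P_{a,b,m}$ is therefore triangular with respect to the monomials $S^b\overline S^aR^m$. These monomials are linearly independent in $\mathbb C[z,\bar z]$ for $n\geq2$, because $S$, $\overline S$, $R$ are algebraically independent: $R$ is not constant on $L^\perp$, and $S$ is free. Invariance of $\mathbb C[S,\overline S,R]$ under $A_n$ then follows. Orthogonality in $L^2(\mu_n)$ follows from the independence in Theorem~\ref{th:split}, which factorizes
\[
\langle P_{a,b,m},P_{a',b',m'}\rangle
=\langle \mathrm H_{a,b},\mathrm H_{a',b'}\rangle_{\gamma_1}\,\langle L_m,L_{m'}\rangle_{\mathrm{Gamma}(\kappa,1)},
\]
together with the orthogonality of the complex Hermite polynomials and of the Laguerre polynomials. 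Alternatively, orthogonality follows from self-adjointness when the eigenvalues differ, with the Hermite orthogonality handling equal eigenvalues.
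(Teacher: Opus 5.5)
Your proposal is correct and follows essentially the paper's proof. It computes from \eqref{eq:An:complex} that $A_n$ acts on functions of $(S,\overline S,R)$ as a complex Ornstein--Uhlenbeck operator plus a Laguerre operator, with the singular interaction terms cancelling exactly. It then uses triangularity for the basis claim and the factorization of Theorem~\ref{th:split} for orthogonality. The one organizational difference is that you split $g(S,\overline S)h(R)$ via the product rule and a vanishing cross term, whereas the paper treats a general $F(S,\overline S,R)$ at once, which also gives the invariance of $\mathbb C[S,\overline S,R]$ directly.

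There is one slip to fix. Since $\sum_j\partial_{z_j}\bar\partial_{z_j}g=n\,\partial_S\bar\partial_Sg$ and \eqref{eq:An:complex} carries a factor $2$ on that term, the correct formula is $A_ng=\frac{2\alpha_n}{n}\bigl(2\partial_S\bar\partial_S-S\partial_S-\overline S\bar\partial_S\bigr)g$. With your coefficient $1$ the operator would be the one for $\mathcal N_{\mathbb C}(0,1/2)$, and $\mathrm H_{1,1}=|S|^2-1$ would not be an eigenfunction. With the factor $2$ restored, your eigenvalue $-(a+b)$ and your final $\lambda_{a,b,m}$ are correct.
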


A proof of Theorem \ref{th:poly} is given in Section \ref{se:proof:th:poly}.

Here are, for instance, the first seven polynomial eigenfunctions provided by Theorem \ref{th:poly}:
\begin{equation}
  P_{0,0,0}=1,\ %
  P_{1,0,0}=S,\ %
  P_{0,1,0}=\overline{S},\ %
  P_{0,0,1}=\kappa-R,%
\end{equation}
\begin{equation}
  P_{2,0,0}=\frac{S^2}{\sqrt{2}},\ %
  P_{1,1,0}=|S|^2-1,\ %
  P_{0,2,0}=\frac{\overline{S}^2}{\sqrt{2}}.
\end{equation}

A standard approximation argument by cutoff shows that all the polynomials
provided by Theorem \ref{th:poly} belong to $H^1(\mu_n)$, and by integration
by parts, to $\mathcal{D}(L_n)$. This provides the following corollary.

\begin{corollary}[Eigenvalues and eigenfunctions of $L_n$]
  The spectrum of $L_n$ contains
  \begin{equation}
    \Bigl\{-2\frac{\alpha_n}{n}k:k\in\mathbb{N}\Bigr\}.
  \end{equation}
\end{corollary}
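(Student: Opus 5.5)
The corollary follows from Theorem \ref{th:poly} once each polynomial $P_{a,b,m}$ is known to be a genuine eigenvector of the self-adjoint operator $L_n$, not just a solution of the differential identity $-A_nP_{a,b,m}=\lambda_{a,b,m}P_{a,b,m}$ on $\mathbb{C}^n_{\neq}$. A nonzero vector $f\in\mathcal{D}(L_n)$ with $L_nf=-\lambda f$ puts $-\lambda$ in the point spectrum, hence in the spectrum. Every $k\in\mathbb{N}$ is reached, for instance with $(a,b,m)=(k,0,0)$, which gives $\lambda_{k,0,0}=2\frac{\alpha_n}{n}k$. This choice also covers $n=1$, where $P_{k,0,0}$ is a complex Hermite polynomial of the Ornstein--Uhlenbeck generator; for $n\geq2$ one may equally use $(0,0,m)$ with $k$ even, or other combinations. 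So the plan reduces to two facts: membership of $P=P_{k,0,0}$ in $\mathcal{D}(L_n)$, and the identity $L_nP=A_nP$.

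\textbf{Step 1: $P\in H^1(\mu_n)$.} Take a radial cutoff $\chi_R(z)=\chi(|z|/R)$ with $\chi\in\mathcal{C}^\infty_c$, $\chi=1$ near $0$, and consider $P\chi_R\in\mathcal{C}^\infty_c(\mathbb{C}^n,\mathbb{C})$. Since $\mu_n$ has Gaussian tails, polynomials and their gradients lie in $L^2(\mu_n)$. Dominated convergence then gives $P\chi_R\to P$ in the graph norm \eqref{eq:graphnorm}, and $H^1(\mu_n)$ is exactly the closure of $\mathcal{C}^\infty_c(\mathbb{C}^n,\mathbb{C})$ by Lemma \ref{le:closure}.

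\textbf{Step 2: $P\in\mathcal{D}(L_n)$ with $L_nP=A_nP=-\lambda P$.} I must show that $\mathcal{E}_n(P,\varphi)=\langle\lambda P,\varphi\rangle_{L^2(\mu_n)}$ for all $\varphi\in H^1(\mu_n)$. By density (Lemma \ref{le:closure}), it suffices to take $\varphi\in\mathcal{C}^\infty_c(\mathbb{C}^n_{\neq},\mathbb{C})$. Both sides are continuous in $\varphi$ for the graph norm, because $P\in H^1(\mu_n)$ and $\lambda P\in L^2(\mu_n)$. For such $\varphi$, the support avoids the singular set $\mathbb{C}^n_{=}$ and is compact, and the density $\rho_n$ is smooth and positive there. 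Classical integration by parts then gives
\[
  \mathcal{E}_n(P,\varphi)
  =\frac{\alpha_n}{\beta_n}\int\nabla P\cdot\overline{\nabla\varphi}\,\rho_n
  =-\int A_nP\,\overline{\varphi}\,\mathrm{d}\mu_n,
\]
and $A_nP=-\lambda P$ holds pointwise on $\mathbb{C}^n_{\neq}$ by Theorem \ref{th:poly}. This yields the domain condition with $a=-\lambda P\in L^2(\mu_n)$. Finally, $P\neq0$ in $L^2(\mu_n)$ because it is a nonzero polynomial and $\mu_n$ has a positive density on a set of full measure.

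\textbf{Main obstacle.} The delicate point is the density of $\mathcal{C}^\infty_c(\mathbb{C}^n_{\neq})$ in $H^1(\mu_n)$, which lets us ignore the Coulomb singularities during integration by parts. This is supplied by Lemma \ref{le:closure}, resting on the capacity statement of Lemma \ref{le:capacity}. Without it, I would integrate by parts directly on $\mathbb{C}^n$ with the cutoff $\chi_R$. That route requires the boundary terms around the diagonals to vanish, which follows because $|V_n|^2$ vanishes to second order there while the drift singularity is only $|z_j-z_k|^{-1}$.
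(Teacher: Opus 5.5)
Your proposal is correct and follows the paper's route: a cutoff approximation puts the polynomial eigenfunctions of Theorem \ref{th:poly} in $H^1(\mu_n)$, and integration by parts then gives membership in $\mathcal{D}(L_n)$ with $L_nP=-\lambda P$; you make the integration by parts rigorous by testing against the dense core $\mathcal{C}^\infty_c(\mathbb{C}^n_{\neq},\mathbb{C})$ from Lemma \ref{le:closure}. Choosing $P_{k,0,0}=S^k/\sqrt{k!}$ is a good touch, since it also covers $n=1$, where Theorem \ref{th:poly} is not stated; note only that the paper deduces Lemma \ref{le:capacity} from Lemma \ref{le:closure}, not the other way round, which does not affect your argument.
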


The self-adjointness of $L_n$ implies automatically that the eigenfunctions
associated to distinct eigenvalues are orthogonal in $L^2(\mu_n)$. But Theorem
\ref{th:poly} provides more orthogonality.

\begin{remark}[Incompleteness]
  The polynomial eigenfunctions in Theorem \ref{th:poly} are orthogonal in
  $L^2(\mu_n)$, but the closure of their span does not give all symmetric
  elements of $L^2(\mu_n)$. For instance
  $Q(z)=\sum_{j=1}^n(z_j-\frac{S(z)}{n})^2$ is a non-zero symmetric polynomial
  which is orthogonal in $L^2(\mu_n)$ to $\mathbb{C}[S,\overline{S},R]$.
  Indeed, if we define
  $T_\theta(z)=S(z)v+\mathrm{e}^{\mathrm{i}\theta}(z-S(z)v)$ where
  $v=(1/n,\ldots,1/n)$. Then $S(T_\theta(z))=S(z)$, $R(T_\theta(z))=R(z)$,
  whereas $Q(T_\theta(z))=\mathrm{e}^{2\mathrm{i}\theta}Q(z)$. For every
  $F\in\mathbb{C}[S,\overline{S},R]$, $F\circ T_\theta=F$,
  $Q\circ T_\theta=\mathrm{e}^{2\mathrm{i}\theta}Q$. Since $T_\theta$
  preserves $\mu_n$, we get
  $\langle Q,F\rangle_{L^2(\mu_n)}=\mathrm{e}^{2\mathrm{i}\theta}\langle
  Q,F\rangle_{L^2(\mu_n)}$ for all $\theta$. Hence $Q\perp F$. Note that
  $\sum_{j=1}^nz_j^2=\frac{S^2}{n}+Q$. From this point of view, Theorem
  \ref{th:poly} can only be seen as a partial counterpart of
  \cite{LassalleHermite,LassalleLaguerre} for the complex Ginibre log-gas.
\end{remark}

The study of the polynomial eigenfunctions of reversible Markov diffusion
operators is a fascinating subject. We refer to
\cite{BakryPolynomialEigenvectors,BakryOrevkovZani}. The differential operator
can then be seen as the multivariate ordinary differential equation associated
to the family of orthogonal polynomials with respect to the equilibrium, see
for instance
\cite{LassalleHermite,LassalleJacobi,LassalleLaguerre,BakerForresterCalogeroSutherland}.

\subsection{Curvatures and deficits}
\label{se:BE}

In this subsection, we take $\alpha_n=n$ for $A_n$ to be aligned with Theorem
\ref{th:main}. The overdamped Langevin operator $A_n$ can then be written on
$\mathbb{C}^n_{\neq}$ as
\begin{equation}\label{eq:generator-alpha-n}
  A_n
  =
  \frac{1}{n}
  \left(
    \Delta-\nabla\Phi_n\cdot\nabla
  \right)
  \quad\text{where}\quad
  \Phi_n(z)
  =n^2E_n(z)
  =n\sum_{j=1}^n|z_j|^2
  -
  2\sum_{j<k}\log|z_j-z_k|.
\end{equation}
The $\Gamma$ and $\Gamma_2$ functional quadratic forms of $A_n$ are given, for all
$f\in\mathcal{C}^\infty_c(\mathbb{C}^n_{\neq},\mathbb{R})$, by
\begin{equation}\label{eq:gamma-two-bochner}
  \Gamma(f)=\Gamma(f,f)=\frac{1}{n}|\nabla f|^2
  \quad\text{and}\quad
  \Gamma_2(f)=\Gamma_2(f,f)
  =\frac{1}{n^2}\bigl(\|\nabla^2f\|_{\mathrm{HS}}^2
  +\langle\nabla^2\Phi_n\nabla f,\nabla f\rangle\bigr).
\end{equation}
The formula for $\Gamma_2$ is a consequence of the Bochner commutation formula
$\nabla A_n=A_n\nabla-\frac{1}{n}\nabla^2\Phi_n\nabla$. Moreover, invariance and
reversibility of $\mu_n$ give, by integration by parts, for all
$f\in\mathcal{C}^\infty_c(\mathbb{C}^n_{\neq},\mathbb{R})$,
\begin{equation}
  \int_{\mathbb{C}^n}\Gamma(f)\mathrm{d}\mu_n =\mathcal{E}_{\mu_n}(f,f)
  \quad\text{and}\quad
  \int_{\mathbb{C}^n}\Gamma_2(f)\mathrm{d}\mu_n =\int_{\mathbb{C}^n}(A_nf)^2\mathrm{d}\mu_n,
\end{equation}
see \cite{BakryPolynomialEigenvectors} or \cite{AneEtAlLSI}. In this context,
the Bakry--Émery curvature is simply the largest constant for which
$\Gamma_2\geq\rho\Gamma$, in other words the infimum over
$\mathbb{C}^n_{\neq}$ of the smallest eigenvalue of the Hessian matrix
$\frac{1}{n}\nabla^2\Phi_n$. The following lemma is taken from
\cite{BolleyChafaiFontbona}.

\begin{lemma}[Lack of lower bound on curvature]
  The Bakry--Émery curvature is $-\infty$.
\end{lemma}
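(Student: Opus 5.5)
The plan is to compute the Hessian of $\Phi_n$ explicitly and exhibit a direction along which the quadratic form is arbitrarily negative near the collision set $\mathbb{C}^n_{=}$. Write $\Phi_n(z)=n|z|^2-2\sum_{j<k}\log|z_j-z_k|$. The quadratic part contributes the constant Hessian $2n\,I_{2n}$. Each pair term depends only on the difference $z_j-z_k$, so everything reduces to the planar function $h(w)=-2\log|w|$ on $\mathbb{R}^2\setminus\{0\}$.

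First, compute the Hessian of $h$. We have $\nabla\log|w|=w/|w|^2$, hence
\[
  \nabla^2\log|w|=\frac{1}{|w|^2}\Bigl(I_2-2\frac{ww^{\mathsf T}}{|w|^2}\Bigr),
\]
which has eigenvalues $+1/|w|^2$ (along $w$) and $-1/|w|^2$ (along $w^\perp$). Its trace is zero, as expected from harmonicity. Consequently $\nabla^2 h(w)$ has eigenvalue $-2/|w|^2$ in the radial direction $w/|w|$.

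Second, lift this to $\mathbb{C}^n$ using the test vector $u\in\mathbb{R}^{2n}$ with $u_1=e/\sqrt2$, $u_2=-e/\sqrt2$ and $u_k=0$ for $k\geq3$. Here $e=w/|w|$ and $w=z_1-z_2$. Then
\[
  \langle\nabla^2\Phi_n(z)u,u\rangle=2n+\langle\nabla^2h(w)(\sqrt2 e),\sqrt2 e\rangle+\sum_{k\geq3}(\text{pair terms }(1,k),(2,k)).
\]
The middle term equals $-4/|w|^2$. For the remaining pairs, each $(j,k)$ term with $j\in\{1,2\}$, $k\geq3$ is bounded in absolute value by $C/|z_j-z_k|^2$.

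Third, make a concrete choice of configuration. Fix $z_k=k$ for $k\geq3$, $z_1=\varepsilon$ and $z_2=-\varepsilon$. Then the other pair terms stay bounded, and the quadratic form is $2n-1/\varepsilon^2+O(1)\to-\infty$ as $\varepsilon\to0$. Dividing by $n$, the smallest eigenvalue of $\frac1n\nabla^2\Phi_n$ is unbounded below on $\mathbb{C}^n_{\neq}$, so the curvature is $-\infty$ for $n\geq2$.

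The only delicate point is bookkeeping of signs: $\Phi_n$ carries $-2\log$, and it is the radial direction of $w$ that gives the negative eigenvalue. For $n=1$ the curvature is $2$, so the lemma implicitly assumes $n\geq2$, consistent with the surrounding discussion.
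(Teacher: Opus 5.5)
Your overall strategy matches the paper's (split off the constant confinement Hessian $2nI_{2n}$, reduce to the planar Hessian of $h=-2\log|\cdot|$, and test along the relative direction of a colliding pair while the other particles stay away). Your formula $\nabla^2\log|w|=|w|^{-2}(I_2-2ww^{\mathsf T}/|w|^2)$ is also correct. However, you read off its eigenvalues backwards, and this breaks the key step.

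Since $(I_2-2ee^{\mathsf T})e=-e$ for $e=w/|w|$, the matrix $\nabla^2\log|w|$ has eigenvalue $-1/|w|^2$ along $w$ and $+1/|w|^2$ along $w^\perp$; this is just the concavity of $\log r$ in $r$. Hence $\nabla^2h(w)$ has eigenvalue $+2/|w|^2$ in the radial direction and $-2/|w|^2$ in the tangential direction, exactly as the paper states. With your test vector $u_1=e/\sqrt2$, $u_2=-e/\sqrt2$, the pair term is $\langle\nabla^2h(w)\sqrt2e,\sqrt2e\rangle=+4/|w|^2$. In your configuration $z_1=\varepsilon$, $z_2=-\varepsilon$, the quadratic form is therefore $2n+1/\varepsilon^2+O(1)\to+\infty$, not $-\infty$. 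Your closing remark that ``it is the radial direction of $w$ that gives the negative eigenvalue'' is precisely the error.

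The fix is to rotate the test direction by a right angle: take $e=\mathrm{i}w/|w|$. In your configuration this means $u_1=\mathrm{i}/\sqrt2$, $u_2=-\mathrm{i}/\sqrt2$, and $u_k=0$ for $k\geq3$, still with $|u|=1$. Then $u_1-u_2=\sqrt2\,\mathrm{i}$ is tangential to $w=2\varepsilon$, and $\langle\nabla^2\Phi_n(z)u,u\rangle=2n-1/\varepsilon^2+O(1)$. The $O(1)$ collects the pairs $(1,k)$ and $(2,k)$ with $k\geq3$, for which $|z_j-z_k|\geq2$ when $\varepsilon\leq1$. The conclusion then follows as you wrote, for $n\geq2$; your remark that the curvature is $2$ when $n=1$ is correct.
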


\begin{proof} 
  Let us separate confinement and interaction terms as
  \begin{equation}
    \Phi_n(z) = n|z|^2+\Phi_n^{\mathrm{int}}(z),
    \quad\text{where}\quad
    \Phi_n^{\mathrm{int}}(z) = -2\sum_{j<k}\log|z_j-z_k|.
  \end{equation}
  In the relative variable $u=z_j-z_k$,
  \[
    \nabla^2\bigl(-2\log|u|\bigr)
    =
    \frac{2}{|u|^2}
    \left(
      2\frac{u}{|u|}\otimes\frac{u}{|u|}
      -
      I_2
    \right),
  \]
  whose radial and tangential eigenvalues are respectively $2/|u|^2$ and
  $-2/|u|^2$. In particular, the interaction curvature is unbounded from below
  as $u$ tends to $0$. More precisely, consider a path along which $z_j-z_k$
  tends to zero while all other pairwise differences remain bounded away from
  zero, and take a unit relative direction tangential to $z_j-z_k$. The
  contribution of the pair $(j,k)$ tends to $-\infty$, whereas the confinement
  term and all other interaction contributions remain bounded. Hence the
  smallest eigenvalue of $\frac{1}{n}\nabla^2\Phi_n$ tends to $-\infty$ when
  approaching $\mathbb{C}^n_{=}$, the edge of $\mathbb{C}^n_{\neq}$.
\end{proof}

\begin{remark}[Constant mean curvature]
  On $\mathbb{C}^n_{\neq}$, the interaction is harmonic, thus
  $\Delta\Phi_n^{\mathrm{int}}=0$. Since the ambient real dimension is $2n$,
  we get $\Delta\Phi_n=4n^2$. Thus the normalized trace of the Bakry--Émery
  curvature matrix is
  \begin{equation}\label{eq:mean-curvature-two}
    \frac{1}{2n}\operatorname{Tr}\Bigl(\frac{1}{n}\nabla^2\Phi_n\Bigr)=2
    \quad\text{ on }\mathbb{C}^n_{\neq}.
  \end{equation}
  In this limited sense, the mean curvature is equal to $2$. This trace
  identity does not imply either the pointwise inequality
  $\Gamma_2\geq2\Gamma$ or its integrated version. It is tempting to ask if
  after factorizing the center-of-mass Ornstein--Uhlenbeck process, the
  remaining recentered process has then a plain Bakry--Émery curvature equal
  to $2$. Unfortunately, this is not the case when $n\geq2$. The curvature of
  the recentered process remains equal to $-\infty$. Curvature is a second
  order notion. The obstruction is the local tangential direction near a pair
  collision, which belongs to $L^\perp$.
  % related to the fact that $\mathbb{C}^n_{\neq}$ has dimension $\geq2$.
\end{remark}

Following \cite{BakryGentilLedoux}, the Poincaré inequality for $\mu_n$ with
constant $1/2$ in Theorem \ref{th:main} is equivalent to the integrated
Bakry--Émery $\Gamma_2$ criterion with curvature $2$:
\begin{equation}\label{eq:gamma2int}
  \int_{\mathbb{C}^n}(A_nf)^2\mathrm{d}\mu_n
  \geq\frac{2}{n}\int_{\mathbb{C}^n}|\nabla f|^2\mathrm{d}\mu_n
  \quad\text{in other words}\quad 
  \|A_nf\|_{L^2(\mu_n)}^2\geq2\mathcal{E}_{\mu_n}(f,f),
\end{equation}
for all symmetric $f\in\mathcal{C}^\infty_c(\mathbb{C}^n_{\neq},\mathbb{R})$.
From \eqref{eq:gamma-two-bochner} we get that for all
$f\in\mathcal{C}^\infty_c(\mathbb{C}^n_{\neq},\mathbb{R})$,
\begin{equation}\label{eq:gamma-two-deficit}
  \Gamma_2(f)-2\Gamma(f)
  =\frac{1}{n^2}\Bigl(\|\nabla^2f\|_{\mathrm{HS}}^2
  +\langle\nabla^2\Phi_n^{\mathrm{int}}\nabla f,\nabla f\rangle\Bigr).
\end{equation}
The right-hand side is not pointwise nonnegative. Nevertheless, Theorem
\ref{th:main} says precisely that its integral with respect to $\mu_n$ is
nonnegative when $f$ is symmetric. This suggests seeking an expression of the
integral of the right-hand side of \eqref{eq:gamma-two-deficit} as a sum of
squares, when $f$ is symmetric.

In the following couple of theorems, the map $U$ and the set $\mathcal{H}_n$ are as in Section
\ref{se:proof:th:main}.

\begin{theorem}[Sum of squares for the Poincaré and $\Gamma_2$ deficits]
  \label{th:deficit1}
  For all symmetric $f\in\mathcal{C}^\infty_c(\mathbb{C}^n_{\neq},\mathbb{R})$, 
  \begin{align}
    \mathcal{E}_{\mu_n}(f,f)
    -2\operatorname{Var}_{\mu_n}(f)
    &=2\|r\|_{L^2(\mu_n)}^2
    +4\sum_{m=2}^{\infty}(m-1)\|g_m\|_{L^2(\gamma_n)}^2\label{eq:poincare-deficit-sum-of-squares}\\
    \|A_nf\|_{L^2(\mu_n)}^2-2\mathcal{E}_{\mu_n}(f,f)
    &=
    \|(A_n+2)\widetilde f\|_{L^2(\mu_n)}^2
    +4\|r\|_{L^2(\mu_n)}^2
    +8\sum_{m=2}^{\infty}(m-1)\|g_m\|_{L^2(\gamma_n)}^2\label{eq:integrated-gamma-two-sum-of-squares}
  \end{align}
  where
  \begin{itemize}
  \item $\widetilde f=f-\int_{\mathbb{C}^n}f\mathrm{d}\mu_n$
  \item $g=\sum_{m=0}^\infty g_m$ is the Hermite expansion of
    $g=U\widetilde f$ in $L^2(\gamma_n)$, with $m=|q|$
  \item $r=\widetilde f-h-\overline{h}$
  \item
    $h=U^{-1}g_0=U^{-1}\operatorname{proj}_{\mathcal{H}_n}(U\widetilde{f})$,
    $\overline{h}$ denotes the pointwise complex conjugate of $h$
  \end{itemize}
\end{theorem}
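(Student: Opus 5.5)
The plan is to transfer everything to the Gaussian side through the Vandermonde transform and read the two deficits off a Hermite expansion. I take $U$ (as in Section \ref{se:proof:th:main}) to be the unitary map $L^2(\mu_n)\to L^2(\gamma_n)$, $Uf=(c_n')^{-1/2}V_nf$, which sends symmetric functions to antisymmetric ones. I take $\mathcal{H}_n$ to be the (antisymmetric) Bargmann--Fock space of holomorphic elements of $L^2(\gamma_n)$. Fix a real symmetric $f\in\mathcal{C}^\infty_c(\mathbb{C}^n_{\neq},\mathbb{R})$ and set $g=U\widetilde f$. Then $g$ is smooth and compactly supported, so every expansion below converges and every integration by parts is legitimate.

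\emph{Energy and variance.} Since $V_n$ is holomorphic, $\bar\partial_{z_j}g=(c_n')^{-1/2}V_n\bar\partial_{z_j}f$. Because $f$ is real, $|\nabla f|^2=4\sum_j|\bar\partial_{z_j}f|^2$, and therefore
\[
\mathcal{E}_{\mu_n}(f,f)=\frac4n\sum_j\|\bar\partial_{z_j}g\|_{L^2(\gamma_n)}^2 .
\]
For $\gamma_n=\mathcal{N}_{\mathbb{C}^n}(0,1/n)$, the operator $\frac1n\sum_j\bar\partial_{z_j}^*\bar\partial_{z_j}$ is the antiholomorphic number operator. It is diagonal on the complex Hermite basis $\mathrm{H}_{p,q}$, with eigenvalue $|q|=m$. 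Grouping the expansion $g=\sum_m g_m$ by $m=|q|$ gives
\[
\mathcal{E}_{\mu_n}(f,f)=4\sum_m m\|g_m\|^2,
\qquad
\operatorname{Var}_{\mu_n}(f)=\|g\|^2=\sum_m\|g_m\|^2 .
\]
Hence
\[
\mathcal{E}-2\operatorname{Var}=-2\|g_0\|^2+2\sum_{m\geq1}\|g_m\|^2+4\sum_{m\geq2}(m-1)\|g_m\|^2
=2\bigl(\|g\|^2-2\|g_0\|^2\bigr)+4\sum_{m\geq2}(m-1)\|g_m\|^2 .
\]
So \eqref{eq:poincare-deficit-sum-of-squares} reduces to the identity $\|r\|_{L^2(\mu_n)}^2=\|\widetilde f\|^2-2\|h\|^2$.

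\emph{The projection identity.} First, $g_0=\operatorname{proj}_{\mathcal{H}_n}g$ is holomorphic and antisymmetric, hence divisible by $V_n$. Therefore $h=U^{-1}g_0$ is a symmetric holomorphic function. Next, I expand
\[
\|r\|^2=\|\widetilde f\|^2-2\Re\langle\widetilde f,h+\bar h\rangle+\|h+\bar h\|^2 .
\]
Unitarity of $U$ gives $\langle\widetilde f,h\rangle=\langle g,g_0\rangle=\|h\|^2$, and since $\widetilde f$ is real, $\langle\widetilde f,\bar h\rangle=\overline{\langle\widetilde f,h\rangle}=\|h\|^2$. Also $\|h+\bar h\|^2=2\|h\|^2+2\Re\int h^2\,\mathrm{d}\mu_n$. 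It therefore remains to show that $\int h^2\,\mathrm{d}\mu_n=\langle h^2V_n,V_n\rangle_{\mathcal{H}}$ vanishes (up to the constant $c_n'$).

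\begin{itemize}
\item $V_n$ is homogeneous of degree $d_n$, and homogeneous components of different degrees are orthogonal in Fock space. Hence only the degree-$d_n$ part of $h^2V_n$ contributes, namely $h(0)^2V_n$.
\item On the other hand, $h(0)\|V_n\|^2=\langle hV_n,V_n\rangle$, which is proportional to $\langle g_0,V_n\rangle=\langle g,V_n\rangle$. The latter is proportional to $\int\widetilde f\,\mathrm{d}\mu_n=0$.
\end{itemize}
So $h(0)=0$ and $\int h^2\,\mathrm{d}\mu_n=0$, which gives $\|r\|^2=\|\widetilde f\|^2-2\|h\|^2$ and proves the first identity.

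\emph{The $\Gamma_2$ deficit.} Using $A_n\widetilde f=A_nf$ and $-\langle A_nf,\widetilde f\rangle=\mathcal{E}_{\mu_n}(f,f)$,
\[
\|(A_n+2)\widetilde f\|^2=\|A_nf\|^2-4\mathcal{E}_{\mu_n}(f,f)+4\operatorname{Var}_{\mu_n}(f).
\]
Hence $\|A_nf\|^2-2\mathcal{E}=\|(A_n+2)\widetilde f\|^2+2(\mathcal{E}-2\operatorname{Var})$, and \eqref{eq:integrated-gamma-two-sum-of-squares} follows from the first identity.

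The main obstacle I expect is the bookkeeping in the Gaussian step. One must pin down that the $1/n$ variance makes $\frac1n\bar\partial^*\bar\partial$ exactly the antiholomorphic degree count. One must also justify the divisibility of $g_0$ by $V_n$ so that $h$ is a genuine holomorphic symmetric function; I would get this from antisymmetry of holomorphic functions vanishing on each hyperplane $\{z_j=z_k\}$.
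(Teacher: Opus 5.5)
Your proof is correct. Its skeleton is the paper's: the Vandermonde transform, the Hermite grading by $|q|$, divisibility of $g_0$ by $V_n$, and the purely algebraic reduction of the $\Gamma_2$ deficit to the Poincaré deficit.

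The one place where you argue differently is the identity $\|\widetilde f\|^2=2\|h\|^2+\|r\|^2$.
\begin{itemize}
\item The paper obtains it from Lemma~\ref{le:projection-geometry}. That lemma splits the symmetric part of $L^2(\mu_n)$ into rotation-weight sectors and identifies $\overline h=Q\widetilde f$. It concludes that $h$, $\overline h$, $r$ are pairwise orthogonal.
\item You instead expand $\|r\|^2$ and isolate the single cross term $\langle h,\overline h\rangle_{L^2(\mu_n)}=\int h^2\,\mathrm{d}\mu_n$. You kill it by a degree count against $V_n$ together with $h(0)=0$.
\end{itemize}
This is the same rotation-invariance mechanism, used only where it is needed. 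Your version is a bit more economical: it needs no operator $Q$ and no separate orthogonality of $r$ to $h$ and $\overline h$. The paper's lemma is more structural, and it is reused elsewhere, e.g.\ through the two-projection bound \eqref{eq:two-projection-bound} in the spectral proof.

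One technical point needs repair. Applying Fock-space orthogonality of homogeneous components to $h^2V_n$ presupposes $h^2V_n\in L^2(\gamma_n)$, i.e.\ $h\in L^4(\mu_n)$. You have not justified this, and it does not follow from $h\in L^2(\mu_n)$ alone.

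The cleanest fix is to average over the circle action $z\mapsto\mathrm{e}^{\mathrm{i}\theta}z$, which preserves $\mu_n$. Fubini applies because $|h|^2\in L^1(\mu_n)$. Combined with the mean value property of the entire function $\zeta\mapsto h(\zeta z)^2$ on the unit circle, this gives directly $\int h^2\,\mathrm{d}\mu_n=h(0)^2$. The same averaging gives $\int h\,\mathrm{d}\mu_n=h(0)$. On the other hand, $\int h\,\mathrm{d}\mu_n=\langle g_0,U1\rangle_{L^2(\gamma_n)}=\langle g,U1\rangle_{L^2(\gamma_n)}=\int\widetilde f\,\mathrm{d}\mu_n=0$, since $U1\in\mathcal{H}_n$. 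Hence $h(0)=0$, and this argument needs only $L^1$ integrability.
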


The proof of Theorem \ref{th:deficit1} is given in Section \ref{se:proof:th:deficit}.

The following theorem provides an alternative complex analytic way to express the deficits.

\begin{theorem}[Sum of squares for the Poincaré and $\Gamma_2$ deficits]
  \label{th:deficit2}
  For all symmetric
  $f\in\mathcal{C}^\infty_c(\mathbb{C}^n_{\neq},\mathbb{R})$,
  \begin{align}
    \mathcal{E}_{\mu_n}(f,f)-2\operatorname{Var}_{\mu_n}(f)
    &=2\|r\|_{L^2(\mu_n)}^2
      +\frac{4}{n}\sum_{j,k=1}^n\|\bar\partial_{z_j}\bar\partial_{z_k}v\|_{L^2(\gamma_n)}^2
      \label{eq:poincare-deficit-differential-squares}\\
    \|A_nf\|_{L^2(\mu_n)}^2-2\mathcal{E}_{\mu_n}(f,f)
    &=\|(A_n+2)\widetilde f\|_{L^2(\mu_n)}^2
      +4\|r\|_{L^2(\mu_n)}^2
      +\frac{8}{n}\sum_{j,k=1}^n\|\bar\partial_{z_j}\bar\partial_{z_k}v\|_{L^2(\gamma_n)}^2
      \label{eq:gamma-two-deficit-differential-squares}
  \end{align}    
  where
  \begin{itemize}
  \item
    $v =
    \mathcal{N}_n^{-1/2}\operatorname{proj}_{\mathcal{H}_n^\perp}(U\widetilde
    f)$ where
    $\mathcal{N}_n
    =\sum_{j=1}^n\mathcal{N}_{\bar{z}_j}=\sum_{j=1}^n(-\partial_{z_j}+n\overline
    z_j)\bar\partial_{z_j}$.
  \item $\widetilde f$, $h$, $\overline h$, and $r$ are as in Theorem \ref{th:deficit1}
  \end{itemize}
\end{theorem}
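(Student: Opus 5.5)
The plan is to derive Theorem~\ref{th:deficit2} directly from Theorem~\ref{th:deficit1}. The terms $\|(A_n+2)\widetilde f\|^2$ and $\|r\|^2$ are literally the same in both statements, so it suffices to prove the single identity
\begin{equation}
  \frac{1}{n}\sum_{j,k=1}^n\|\bar\partial_{z_j}\bar\partial_{z_k}v\|_{L^2(\gamma_n)}^2
  =\sum_{m=2}^\infty(m-1)\|g_m\|_{L^2(\gamma_n)}^2 .
\end{equation}
Substituting it into \eqref{eq:poincare-deficit-sum-of-squares} and \eqref{eq:integrated-gamma-two-sum-of-squares} gives \eqref{eq:poincare-deficit-differential-squares} and \eqref{eq:gamma-two-deficit-differential-squares}, with factors $4$ and $8$ respectively.

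\textbf{Step 1: diagonalize $\mathcal{N}_n$.} In $L^2(\gamma_n)$, where $\gamma_n$ has variance $1/n$, the adjoint of $\bar\partial_{z_j}$ is $\bar\partial_{z_j}^*=-\partial_{z_j}+n\bar z_j$. Hence $\mathcal{N}_{\bar z_j}=\bar\partial_{z_j}^*\bar\partial_{z_j}$ and $\mathcal{N}_n=\sum_j\bar\partial_{z_j}^*\bar\partial_{z_j}$ is a nonnegative number operator. On the complex Hermite basis, $\mathcal{N}_n$ acts on the component of antiholomorphic degree $m=|q|$ by the scalar $nm$. A check on $\bar z_j$ confirms the normalization: $\bar\partial_{z_j}\bar z_j=1$, and then $\bar\partial_{z_j}^*1=n\bar z_j$. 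Thus $\mathcal{H}_n=\ker\mathcal{N}_n$ is exactly the $m=0$ part, and
\begin{equation}
  v=\sum_{m\geq1}(nm)^{-1/2}g_m ,
  \qquad \|v_m\|^2=\frac{\|g_m\|^2}{nm}.
\end{equation}

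\textbf{Step 2: commutation relation.} From $[\bar\partial_{z_j},\bar\partial_{z_k}^*]=n\delta_{jk}$ we get $\bar\partial_{z_j}\mathcal{N}_n=(\mathcal{N}_n+n)\bar\partial_{z_j}$. This is the statement that $\bar\partial_{z_j}$ lowers the antiholomorphic degree by one. It yields
\begin{equation}
  \sum_{j,k}\|\bar\partial_{z_k}\bar\partial_{z_j}v\|^2
  =\sum_j\langle\mathcal{N}_n\bar\partial_{z_j}v,\bar\partial_{z_j}v\rangle
  =\sum_j\langle\bar\partial_{z_j}(\mathcal{N}_n-n)v,\bar\partial_{z_j}v\rangle
  =\langle(\mathcal{N}_n-n)v,\mathcal{N}_nv\rangle .
\end{equation}
Expanding in degrees, the last quantity equals
\begin{equation}
  \sum_{m\geq1}n(m-1)\,nm\,\|v_m\|^2=n\sum_{m\geq2}(m-1)\|g_m\|^2 ,
\end{equation}
since the $m=1$ term vanishes. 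Dividing by $n$ gives the required identity. Equivalently, one can compute degree by degree: $\bar\partial\bar\partial$ applied to a degree-$m$ component has squared norm $n^2m(m-1)$ times its own squared norm.

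\textbf{Step 3: justification, the main obstacle.} The remaining work is analytic. One must ensure that $g=U\widetilde f$ lies in the domain where these manipulations are legitimate: that $v$ is well defined, that $\bar\partial_j\bar\partial_kv\in L^2(\gamma_n)$, and that the series may be handled termwise. Since $f$ is smooth and compactly supported in $\mathbb{C}^n_{\neq}$, I expect $U\widetilde f$ to be $V_n$ times a function that is smooth off a compact set, plus $V_n$ times a constant. It is therefore smooth with polynomial growth, and all its $\bar\partial$-derivatives lie in $L^2(\gamma_n)$.

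With this regularity the plan is as follows. Apply the identity first to the finite truncations $\sum_{m\leq M}g_m$. Then pass to the limit $M\to\infty$ by monotone convergence of the nonnegative series. Finally use closedness of $\bar\partial$ to identify the limit with $\sum_{j,k}\|\bar\partial_j\bar\partial_kv\|^2$. If the domain argument proves delicate, the fallback is to interpret both sides as elements of $[0,\infty]$. Theorem~\ref{th:deficit1} already shows that the right-hand side is finite, since it is bounded by quantities finite for such $f$, and the identity then holds in this extended sense.
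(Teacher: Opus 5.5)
Your proof is correct, but it is organized differently from the paper's.

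\textbf{Your route.} You take Theorem~\ref{th:deficit1} as given and reduce everything to the single identity $\frac1n\sum_{j,k}\|\bar\partial_{z_j}\bar\partial_{z_k}v\|_{L^2(\gamma_n)}^2=\sum_{m\geq2}(m-1)\|g_m\|_{L^2(\gamma_n)}^2$. You verify it in the Hermite basis, where $\mathcal{N}_n$ acts by $nm$ on antiholomorphic degree $m$ and $v=\sum_{m\geq1}(nm)^{-1/2}g_m$.

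\textbf{The paper's route.} The paper gives a self-contained argument that uses neither Theorem~\ref{th:deficit1} nor any Hermite expansion.
\begin{enumerate}
\item It proves the Gaussian Bochner--Kodaira identity $\|\mathcal{N}_nw\|^2-n\langle w,\mathcal{N}_nw\rangle=\sum_{j,k}\|\bar\partial_{z_j}\bar\partial_{z_k}w\|^2$ on $\mathcal{C}^\infty_c$, and extends it by an operator-core argument.
\item It deduces $\ker\mathcal{N}_n=\mathcal{H}_n$ and $\operatorname{spec}(\mathcal{N}_n)\subset\{0\}\cup[n,\infty)$ from $\mathcal{N}_n^2\geq n\mathcal{N}_n$ alone.
\item Using spectral calculus together with the divisibility and projection-geometry lemmas, it writes the Poincar\'e deficit as $2\|r\|^2+\frac4n\|(\mathcal{N}_n-nI)^{1/2}u\|^2$, where $u=(I-\Pi_n)g$.
\item It applies the Bochner--Kodaira identity to $v$, using $u=\mathcal{N}_n^{1/2}v$.
\end{enumerate}

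\textbf{Comparison.} The algebraic core is shared. Your Step~2 computation, $\sum_{j,k}\|\bar\partial_{z_j}\bar\partial_{z_k}v\|^2=\langle(\mathcal{N}_n-n)v,\mathcal{N}_nv\rangle$, is exactly that identity applied to $v$. Your route is short once Theorem~\ref{th:deficit1} is available. It also shows that the two deficit formulas agree term by term: the $\bar\partial\bar\partial v$ term is a basis-free rewriting of the Hermite tail $\sum_{m\geq2}(m-1)\|g_m\|^2$. The paper's route does not need an explicit eigenbasis. The gap and the sum of squares come only from the commutation relation $[\bar\partial_{z_j},\bar\partial_{z_k}^*]=n\delta_{jk}$ and the spectral theorem. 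That commutator is the complex Hessian of the weight, which is the mechanism behind the H\"ormander--Berndtsson estimate.

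\textbf{A point of rigor in your Step~3.} When you invoke closedness of $\bar\partial$, you need $L^2(\gamma_n)$ convergence of the truncated second derivatives, not just monotone convergence of their norms. This follows because $\bar\partial_{z_j}\bar\partial_{z_k}$ sends antiholomorphic degree $m$ to degree $m-2$. The contributions of distinct degrees are therefore mutually orthogonal, and the truncations are Cauchy as soon as $\sum_m m\|g_m\|^2=\frac1n\sum_k\|\bar\partial_{z_k}g\|^2<\infty$. That finiteness holds here, since $U\widetilde f$ is a compactly supported smooth function plus a constant multiple of $V_n$. So your $[0,\infty]$ fallback is not needed.
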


%The formula is global and nonlocal through $\operatorname{proj}_{\mathcal{H}_n}$ and $\mathcal{N}_n^{-1/2}$.

The proof of Theorem \ref{th:deficit2} is given in Section \ref{se:proof:th:deficit}.

\subsection{Log-Sobolev inequalities}

Given Theorem \ref{th:main}, it is quite natural to ask if for all $n\geq1$,
there exists a constant $0<C_n<\infty$ such that for all
$f\in\mathcal{C}^\infty_c(\mathbb{C}^n,\mathbb{R})$,
\begin{equation}\label{eq:LSI}
  \operatorname{Ent}_{\mu_n}(f^2)\leq C_n\mathcal{E}_{\mu_n}(f,f),
\end{equation}
where $\mathcal{E}_{\mu_n}(f,f)$ is as in Theorem \ref{th:main}, and where, using the convention $0\log(0)=0$,
\begin{equation}
  \operatorname{Ent}_{\mu_n}(f^2)
  =\int_{\mathbb{C}^n}f^2\log(f^2)\mathrm{d}\mu_n
  -\Bigl(\int_{\mathbb{C}^n}f^2\mathrm{d}\mu_n\Bigr)\log\int_{\mathbb{C}^n}f^2\mathrm{d}\mu_n.
\end{equation}
We can pass from $f\in\mathcal{C}^\infty_c(\mathbb{C}^n,\mathbb{R})$ to
$H^1_{\mathbb{R}}(\mu_n)$ by standard approximation methods. Following
\cite{Gross}, the inequality \eqref{eq:LSI} is known as a logarithmic Sobolev
inequality of constant $C_n$ for functions in
$\mathcal{C}^\infty_c(\mathbb{C}^n,\mathbb{R})$. The linearization
$f=1+\varepsilon g$ allows to extract from \eqref{eq:LSI} a Poincaré
inequality with constant $C_n/2$ for $g$. Together with Theorem \ref{th:main},
this leads to formulate the following problem.

\begin{problem}[Logarithmic Sobolev inequalities]\label{pb:lsi}
  Find the sharp (best or smallest) constant for the logarithmic Sobolev
  inequality \eqref{eq:LSI} for $\mu_n$, for symmetric functions. Does it
  depend on $n$? Is it equal to $1$? Alternatively, find a counterexample
  (sequence of functions). Possibly, further specialize the problem to linear
  statistics, as done in Section \ref{se:linear-statistics} for the Poincaré
  inequality.
\end{problem}

We give now an answer for symmetric radial functions, based on the moduli
decoupling in $\mu_n$ which goes back to \cite{Kostlan}. The constant is
dimension-free, as in Theorem \ref{th:main}.

\begin{theorem}[Uniform logarithmic Sobolev inequality for symmetric radial functions]\label{th:rlsi}
  For all $n\geq1$, and all symmetric
  $F\in\mathcal{C}^\infty_c((0,\infty)^n,\mathbb{R})$, the function
  $f(z_1,\ldots,z_n)=F(|z_1|,\ldots,|z_n|)$ satisfies
  \begin{equation}\label{eq:rlsi}
    \operatorname{Ent}_{\mu_n}(f^2)
    \leq
    %\frac{1}{n}
    %\int_{\mathbb{C}^n}|\nabla f|^2\,\mathrm{d}\mu_n
    %=
    \mathcal{E}_{\mu_n}(f,f).
  \end{equation}
  Moreover, this remains valid for the $H^1_{\mathbb{R}}(\mu_n)$-closure of
  such smooth symmetric radial functions.
\end{theorem}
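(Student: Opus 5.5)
The plan is to reduce to a product measure on $(0,\infty)^n$ via Kostlan's theorem \cite{Kostlan}, and then apply the Bakry--Émery criterion coordinatewise and tensorize. Kostlan's result states that if $\mathbf{Z}\sim\mu_n$, then the \emph{unordered} collection of moduli $\{|\mathbf{Z}_1|,\ldots,|\mathbf{Z}_n|\}$ has the same law as $\{\mathbf{r}_1,\ldots,\mathbf{r}_n\}$, where $\mathbf{r}_1,\ldots,\mathbf{r}_n$ are independent and $n\mathbf{r}_k^2\sim\mathrm{Gamma}(k,1)$. With the $1/n$ scaling of the weight $\mathrm{e}^{-n|z|^2}$, this means $\mathbf{r}_k$ has density proportional to $r^{2k-1}\mathrm{e}^{-nr^2}$ on $(0,\infty)$. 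Call this law $\eta_k$ and set $\eta=\eta_1\otimes\cdots\otimes\eta_n$. Since $F$ is symmetric, $F(|z_1|,\ldots,|z_n|)$ depends only on the unordered set of moduli. Hence $\int \Psi(f)\,\mathrm{d}\mu_n=\int\Psi(F)\,\mathrm{d}\eta$ for every reasonable $\Psi$, and in particular $\operatorname{Ent}_{\mu_n}(f^2)=\operatorname{Ent}_\eta(F^2)$.

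Next we treat the energy. Since $F$ is supported in $(0,\infty)^n$, the chain rule gives $|\nabla_{z_k} f|=|\partial_kF(|z_1|,\ldots,|z_n|)|$ with no trouble at $z_k=0$. Thus
\[
|\nabla f|^2=G(|z_1|,\ldots,|z_n|),
\qquad G=\sum_{k=1}^n(\partial_kF)^2 .
\]
Symmetry of $F$ implies $(\partial_kF)(x_\sigma)=(\partial_{\sigma(k)}F)(x)$, so $G$ is again symmetric. Applying Kostlan's identity in law once more gives
\[
\mathcal{E}_{\mu_n}(f,f)=\frac1n\int G\,\mathrm{d}\eta .
\]
This matching of the energy is the step I expect to require the most care. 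The labelled coordinates of the product measure do not correspond to labelled particles, so the argument must use only symmetric functionals, which is exactly why symmetry of $F$ is needed for both terms.

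For each $k$ I will write $\eta_k\propto\mathrm{e}^{-V_k}$ with $V_k(r)=nr^2-(2k-1)\log r$. Then
\[
V_k''(r)=2n+\frac{2k-1}{r^2}\geq 2n \quad\text{on }(0,\infty),
\]
so $\eta_k$ is $2n$-strongly log-concave on the half-line. The Bakry--Émery criterion, which applies on the convex domain $(0,\infty)$, then gives $\operatorname{Ent}_{\eta_k}(g^2)\leq\frac{2}{2n}\int g'^2\,\mathrm{d}\eta_k=\frac1n\int g'^2\,\mathrm{d}\eta_k$. Tensorization of log-Sobolev inequalities yields
\[
\operatorname{Ent}_\eta(F^2)\leq\frac1n\int\sum_k(\partial_kF)^2\,\mathrm{d}\eta=\mathcal{E}_{\mu_n}(f,f),
\]
which is \eqref{eq:rlsi}. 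Alternatively, $\eta$ is itself $2n$-strongly log-concave on the convex set $(0,\infty)^n$, so the criterion applies to $\eta$ directly.

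Finally, to extend to the $H^1_{\mathbb{R}}(\mu_n)$-closure, take a sequence $f_j$ of smooth symmetric radial functions converging to $f$ in $H^1_{\mathbb{R}}(\mu_n)$. Then $f_j\to f$ in $L^2(\mu_n)$ and $\mathcal{E}_{\mu_n}(f_j,f_j)\to\mathcal{E}_{\mu_n}(f,f)$. Along an almost-everywhere convergent subsequence, Fatou's lemma applied to $f^2\log f^2+\mathrm{e}^{-1}\geq0$, together with the $L^2$ convergence of the normalizing term, gives lower semicontinuity of the entropy. Hence the inequality passes to the limit.
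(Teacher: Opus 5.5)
Your proof is correct. Its skeleton matches the paper's. The Kostlan identity transfers both the entropy and the energy (through the symmetric function $\sum_k(\partial_kF)^2$) to the product measure $\nu_{n,1}\otimes\cdots\otimes\nu_{n,n}$. The extension to the closure is the same lower-semicontinuity argument.

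The genuine difference is how you obtain the log-Sobolev inequality for the product measure. The paper never works on the half-line. It realizes $\nu_{n,k}$ as the law of $|X_k|$, where $X_k$ is a centered Gaussian vector of $\mathbb{R}^{2k}$ with covariance $(2n)^{-1}I_{2k}$. It then applies the classical Gaussian log-Sobolev inequality, with constant $1/n$ on $\mathbb{R}^{n(n+1)}$, to $H(x)=F(|x_1|,\ldots,|x_n|)$. Finally it pushes forward using $|\nabla H|^2=\sum_k(\partial_kF)^2$. The factor $r^{2k-1}$ thus appears as a polar Jacobian rather than as part of a potential. You instead absorb it into $V_k(r)=nr^2-(2k-1)\log r$, then use $V_k''\geq 2n$, the Bakry--Émery criterion on the convex domain $(0,\infty)$, and tensorization.

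What each route buys:
\begin{itemize}
\item The paper's lift only invokes the Gaussian inequality on a whole Euclidean space, so no domain or boundary issue ever arises.
\item Your route relies on the Bakry--Émery criterion for a strongly log-concave measure restricted to a convex set. This is standard but deserves a justification or citation. For instance, $\eta_k$ is a log-concave perturbation of $\mathcal{N}_{\mathbb{R}}(0,\frac{1}{2n})$ by the convex weight $-(2k-1)\log r$, extended by $+\infty$ on $(-\infty,0]$.
\item In exchange, your argument is one-dimensional. It also makes visible the extra curvature $(2k-1)/r^2$ contributed by the Jacobian, which the Gaussian lift hides.
\end{itemize}
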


The proof of Theorem \ref{th:rlsi} is given in Section \ref{se:proof:th:rlsi}.

We could think of proving Theorem \ref{th:rlsi} using the Gaussian optimal
logarithmic Sobolev inequality for complex Ginibre random matrices.
Unfortunately, for such non-normal matrices, the eigenvalues map is not
Lipschitz. Still we can extract something using this matrix lifting approach.

Let $\gamma_n^{\mathrm{mat}}$ denote the law on
$\mathbb{C}^{n\times n}\equiv\mathbb{R}^{2n^2}$ of a random matrix $G_n$
whose entries are independent with distribution
$\mathcal{N}_{\mathbb{C}}(0,1/n)$. Thus $\gamma_n^{\mathrm{mat}}$ has density
proportional to
\[
  G\longmapsto\mathrm{e}^{-n\operatorname{Tr}(GG^*)}.
\]
The spectrum of $G_n$ is almost surely simple, that is, has pairwise distinct
eigenvalues. On the simple-spectrum set, choose any measurable labeling
$\lambda_1(G),\ldots,\lambda_n(G)$ of the eigenvalues, and choose right and
left eigenvectors $r_j(G)$ and $\ell_j(G)$ normalized by
\begin{equation}\label{eq:left-right-eigenvectors}
  Gr_j=\lambda_jr_j,
  \qquad
  \ell_j^*G=\lambda_j\ell_j^*,
  \qquad
  \ell_j^*r_k=\delta_{jk}.
\end{equation}
The matrices $\Pi_j=r_j\ell_j^*$ are the spectral projectors of $G$. Define
the overlap Gram matrix
\begin{equation}\label{eq:overlap-gram}
  \mathcal{O}_{jk}(G) = \operatorname{Tr}(\Pi_k\Pi_j^*),\quad 1\leq j,k\leq n.
\end{equation}
It is Hermitian and nonnegative, since
\begin{equation}\label{eq:overlap-positive}
  c^*\mathcal{O}(G)c = \Bigl\| \sum_{j=1}^n c_j\Pi_j \Bigr\|_{\mathrm{HS}}^2 \geq0.
\end{equation}
Moreover, $\mathcal{O}(G)$ does not depend on the normalization of the left
and right eigenvectors. We refer to
\cite{BourgadeDubachOverlaps} for the study of these overlaps in the complex
Ginibre ensemble.

\begin{theorem}[Functional inequalities from the matrix lift and eigenvector
  overlaps]\label{th:mlsi}
  For every symmetric $f\in\mathcal{C}^1(\mathbb{C}^n,\mathbb{R})$ such that
  $\Phi_f(G)=f(\lambda_1(G),\ldots,\lambda_n(G))\in
  H^1(\gamma_n^{\mathrm{mat}})$, we have
  \begin{equation}
    \operatorname{Var}_{\mu_n}(f)
    \leq
    \frac{2}{n}
    \mathbb{E}\left[
      a_f(G_n)^*\mathcal{O}(G_n)a_f(G_n)
    \right]
    \text{ and }
    \operatorname{Ent}_{\mu_n}(f^2)
    \leq
    \frac{4}{n}
    \mathbb{E}\left[
      a_f(G_n)^*\mathcal{O}(G_n)a_f(G_n)
    \right]
  \end{equation}
  where $a_f(G)$ is the random vector of $\mathbb{C}^n$ given by
  $(a_f(G))_j=\partial_{z_j}f(\lambda_1(G),\ldots,\lambda_n(G))$. Moreover the
  quadratic form on the right-hand sides is independent of the labeling of the
  eigenvalues.
\end{theorem}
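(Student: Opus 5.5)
The idea is to transport the Gaussian Poincaré and log-Sobolev inequalities on matrix space through the eigenvalue map. Since $G_n$ has law $\gamma_n^{\mathrm{mat}}$, the eigenvalues $(\lambda_1(G_n),\ldots,\lambda_n(G_n))$, uniformly labeled, have law $\mu_n$. For symmetric $f$, the function $\Phi_f(G)=f(\lambda(G))$ does not depend on the labeling, and $\operatorname{Var}_{\mu_n}(f)=\operatorname{Var}_{\gamma_n^{\mathrm{mat}}}(\Phi_f)$, with the same identity for entropies. The measure $\gamma_n^{\mathrm{mat}}$ is a product of real Gaussians of variance $1/(2n)$ on $\mathbb{R}^{2n^2}$. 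It therefore satisfies
\[
\operatorname{Var}(\Phi)\le\frac{1}{2n}\int|\nabla\Phi|^2\mathrm{d}\gamma_n^{\mathrm{mat}}
\quad\text{and}\quad
\operatorname{Ent}(\Phi^2)\le\frac{1}{n}\int|\nabla\Phi|^2\mathrm{d}\gamma_n^{\mathrm{mat}}
\]
for $\Phi\in H^1(\gamma_n^{\mathrm{mat}})$, where $\nabla$ is the real gradient on $\mathbb{R}^{2n^2}$. So it suffices to show that, on the simple-spectrum set (of full measure),
\[
|\nabla\Phi_f(G)|^2=4\,a_f(G)^*\mathcal{O}(G)a_f(G).
\]

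For the gradient computation, note that $\Phi_f$ is $\mathcal{C}^1$ near a matrix with simple spectrum, because there the eigenvalues are locally holomorphic functions of $G$ after a local labeling. First-order perturbation gives $\mathrm{d}\lambda_j=\ell_j^*(\mathrm{d}G)r_j=\operatorname{Tr}(\Pi_j\,\mathrm{d}G)$, and $\mathrm{d}\bar\lambda_j$ is its conjugate. Since $f$ is real, the chain rule gives
\[
\mathrm{d}\Phi_f=2\Re\sum_j\partial_{z_j}f\,\mathrm{d}\lambda_j=2\Re\operatorname{Tr}(M\,\mathrm{d}G),
\qquad M=\sum_j a_j\Pi_j .
\]
Identify $\mathbb{C}^{n\times n}$ with $\mathbb{R}^{2n^2}$ using the real inner product $\Re\operatorname{Tr}(A^*B)$. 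Then $\Re\operatorname{Tr}(M\,H)=\langle M^*,H\rangle$, so the real gradient is $2M^*$ and
\[
|\nabla\Phi_f|^2=4\|M\|_{\mathrm{HS}}^2=4\sum_{j,k}\bar a_j a_k\operatorname{Tr}(\Pi_k\Pi_j^*)=4\,a^*\mathcal{O}a,
\]
using the expansion in \eqref{eq:overlap-positive}. Plugging this into the two Gaussian inequalities gives $\operatorname{Var}\le\frac{1}{2n}\cdot4\,\mathbb{E}[a^*\mathcal{O}a]=\frac{2}{n}\mathbb{E}[a^*\mathcal{O}a]$ and $\operatorname{Ent}\le\frac{1}{n}\cdot4\,\mathbb{E}[a^*\mathcal{O}a]=\frac{4}{n}\mathbb{E}[a^*\mathcal{O}a]$, which are the claimed constants. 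As a sanity check, if the $\Pi_j$ were orthogonal projectors then $\mathcal{O}=I$, and the bound becomes $\frac{2}{n}\sum_j|\partial_{z_j}f|^2=\frac{1}{2n}|\nabla f|^2$, matching Theorem~\ref{th:main}.

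For labeling independence, a permutation $\sigma$ of the labels permutes the $\Pi_j$, and by symmetry of $f$ it permutes the entries of $a_f$ in the same way. Hence $M$, and therefore $a^*\mathcal{O}a=\|M\|_{\mathrm{HS}}^2$, is unchanged. Independence from the eigenvector normalization was already noted in the paper.

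The main obstacle is justifying that the weak gradient of $\Phi_f$, an assumed element of $H^1(\gamma_n^{\mathrm{mat}})$, coincides almost everywhere with the pointwise gradient computed above. I would argue as follows. The complement of the simple-spectrum set is the zero set of the discriminant, a nonzero polynomial, so it is a closed null set. On the open simple-spectrum set, $\Phi_f$ is $\mathcal{C}^1$, so its distributional gradient there is the classical one. Since $\Phi_f\in H^1$ by hypothesis, its weak gradient restricted to that open set agrees with the classical gradient, and the null exceptional set does not affect the $L^2$ norm. The Gaussian inequalities, applied to $\Phi_f$ directly in $H^1$, then conclude the proof.
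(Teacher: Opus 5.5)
Your proposal is correct and follows essentially the same route as the paper's proof. The shared steps are the first-order perturbation formula $\mathrm{D}\lambda_j(G)[H]=\operatorname{Tr}(\Pi_jH)$, the real Hilbert--Schmidt gradient $2\sum_j\overline{a_j}\Pi_j^*$ with squared norm $4a_f^*\mathcal{O}a_f$, the Gaussian Poincaré and log-Sobolev inequalities with constants $1/(2n)$ and $1/n$, and the transfer of variance and entropy through the eigenvalue map. Your extra argument, which identifies the weak gradient of $\Phi_f$ with the classical one off the closed, Lebesgue-null discriminant locus, is a useful justification that the paper leaves implicit.
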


Theorem \ref{th:mlsi} produces a Dirichlet form weighted by eigenvector
overlaps. It does not by itself give a comparison with $\mathcal{E}_{\mu_n}$,
since such a comparison would require an additional estimate of
$a_f(G_n)^*\mathcal{O}(G_n)a_f(G_n)$ in terms of $\sum_j|\partial_{z_j}f|^2$.
Thus the matrix lift alone does not solve Problem \ref{pb:lsi}. This is in
contrast with the situation for GUE, for which the overlap matrix is the
identity due to the normality (not in the sense of Gaussianity!) of the
matrix.
% The Sobolev assumption on $\Phi_f$ should not be removed without a separate
% analysis near the multiple-spectrum set.

\begin{proof}
  Let $G$ have simple spectrum. First-order perturbation theory gives, for
  every $H\in\mathbb{C}^{n\times n}$,
  \begin{equation}\label{eq:eigenvalue-differential}
    \mathrm{D}\lambda_j(G)[H]
    =
    \ell_j^*Hr_j
    =
    \operatorname{Tr}(\Pi_jH).
  \end{equation}
  Since $f$ is real-valued,
  \begin{align}
    \mathrm{D}\Phi_f(G)[H]
    &=
    2\operatorname{Re}
    \sum_{j=1}^n
    \partial_{z_j}f(\lambda_1(G),\ldots,\lambda_n(G))
    \operatorname{Tr}(\Pi_jH).
  \end{align}
  Equip $\mathcal{M}_n(\mathbb{C})=\mathbb{C}^{n\times n}$ with the real
  Hilbert--Schmidt scalar product
  \[
    \langle K,H\rangle_{\mathrm{HS},\mathbb{R}}
    =
    \operatorname{Re}\operatorname{Tr}(K^*H).
  \]
  It follows that
  $\nabla\Phi_f(G)=2\sum_{j=1}^n\overline{\partial_{z_j}f(\lambda_1(G),\ldots,\lambda_n(G))}\Pi_j^*$,
  and thus
  \begin{equation}\label{eq:matrix-lift-gradient}
    \|\nabla\Phi_f(G)\|_{\mathrm{HS}}^2
    =
    4a_f(G)^*\mathcal{O}(G)a_f(G).
  \end{equation}
  In real coordinates, the probability distribution $\gamma_n^{\mathrm{mat}}$
  is the centered Gaussian measure with covariance matrix $(2n)^{-1}I_{2n^2}$.
  Its Gaussian Poincaré inequality, see
  \cite[Ch.~5]{BakryPolynomialEigenvectors}, gives
  \begin{equation}
    \operatorname{Var}_{\gamma_n^{\mathrm{mat}}}(\Phi_f)
    \leq
    \frac{1}{2n}
    \int_{\mathbb{C}^{n\times n}}
    \|\nabla\Phi_f(G)\|_{\mathrm{HS}}^2
    \,\mathrm{d}\gamma_n^{\mathrm{mat}}(G),
  \end{equation}
  while its Gaussian logarithmic Sobolev inequality gives
  \begin{equation}
    \operatorname{Ent}_{\gamma_n^{\mathrm{mat}}}(\Phi_f^2)
    \leq
    \frac{1}{n}\int_{\mathbb{C}^{n\times n}}
    \|\nabla\Phi_f(G)\|_{\mathrm{HS}}^2
    \mathrm{d}\gamma_n^{\mathrm{mat}}(G).
  \end{equation}
  Since $f$ is symmetric, $\Phi_f$ is independent of the chosen labeling.
  The unordered spectrum of $G_n$ has the distribution induced by $\mu_n$.
  Hence
  \begin{equation}
    \operatorname{Var}_{\gamma_n^{\mathrm{mat}}}(\Phi_f)
    =\operatorname{Var}_{\mu_n}(f)
    \quad\text{and}\quad
    \operatorname{Ent}_{\gamma_n^{\mathrm{mat}}}(\Phi_f^2)
    =\operatorname{Ent}_{\mu_n}(f^2).
  \end{equation}
  Combining these identities with \eqref{eq:matrix-lift-gradient} proves the
  desired inequalities.

  Finally, under a permutation of the eigenvalue labels, the entries of
  $a_f$ and the rows and columns of $\mathcal{O}$ are permuted in the same
  way. The quadratic form $a_f^*\mathcal{O}a_f$ is therefore unchanged.
\end{proof}

\subsection{Non-quadratic potential}

\begin{theorem}[Determinantal log-gases with non-quadratic confining potential]
  \label{th:nonquad}
  Take $n\geq1$ and let
  $V\in\mathcal{C}^2(\mathbb{C}\equiv\mathbb{R}^2,\mathbb{R})$ be rotationally
  invariant, and such that
  \begin{equation}
    Z_n^V=\int_{\mathbb{C}^n}\mathrm{e}^{-n\sum_{k=1}^nV(z_k)}\prod_{j<k}|z_j-z_k|^2\mathrm{d}z_1\cdots\mathrm{d}z_n <\infty.
  \end{equation}
  Consider the Boltzmann--Gibbs probability measure on $\mathbb{C}^n$ defined by
  \begin{equation}
    \mu_n^V=\frac{1}{Z_n^V}\mathrm{e}^{-n\sum_{k=1}^nV(z_k)}\prod_{j<k}|z_j-z_k|^2\mathrm{d}z_1\cdots\mathrm{d}z_n.
  \end{equation}
  Then for every constant $\rho>0$, the following properties are satisfied.
  \begin{enumerate}
  \item If $V$ is $\rho$-subharmonic, in the sense that
    $V-\frac{\rho}{2}\left|\cdot\right|^2$ is subharmonic, then $\mu_n^V$
    satisfies a Poincaré inequality for symmetric functions: for all symmetric
    $f\in\mathcal{C}^\infty_c(\mathbb{C}^n,\mathbb{R})$,
    \begin{equation}
      \operatorname{Var}_{\mu_n^V}(f)%
      \leq\frac{1}{\rho n}\int_{\mathbb{C}^n}|\nabla f|^2\mathrm{d}\mu_n^V.
    \end{equation}
  \item If $V$ is $\rho$-convex, in the sense that
    $V-\frac{\rho}{2}\left|\cdot\right|^2$ is convex, then $\mu_n^V$ satisfies
    a logarithmic Sobolev inequality for symmetric radial functions: for all
    symmetric $F\in\mathcal{C}^\infty_c((0,\infty)^n,\mathbb{R})$, the
    function $f(z_1,\ldots,z_n)=F(|z_1|,\ldots,|z_n|)$ satisfies
    \begin{equation}
      \operatorname{Ent}_{\mu_n^V}(f^2)%
      \leq\frac{2}{\rho n}\int_{\mathbb{C}^n}|\nabla f|^2\mathrm{d}\mu_n^V.
    \end{equation}
  \end{enumerate}
\end{theorem}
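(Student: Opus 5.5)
The plan is to treat the two items separately. Item (1) transposes the Vandermonde-transform and holomorphic-projection strategy used for Theorem \ref{th:main}, replacing the Gaussian weight by $\phi(z)=n\sum_{k}V(z_k)$. Item (2) reduces to independent one-dimensional log-concave laws via Kostlan's decoupling of the moduli, as in Theorem \ref{th:rlsi}.

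\emph{Item (1).} Fix a symmetric $f\in\mathcal{C}^\infty_c(\mathbb{C}^n,\mathbb{R})$ and set $\widetilde f=f-\int f\,\mathrm{d}\mu_n^V$.

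First, I use the isometry $Uf=V_nf$. It maps symmetric elements of $L^2(\mu_n^V)$ onto antisymmetric elements of $L^2(\mathrm{e}^{-\phi}\mathrm{d}z/Z_n^V)$.

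Second, let $P$ be the orthogonal projection onto the closed subspace of holomorphic functions in $L^2(\mathrm{e}^{-\phi})$. Since $\phi$ is symmetric, $P$ commutes with permutations, so $PU\widetilde f$ is antisymmetric and holomorphic. By the divisibility property it equals $V_nh$ with $h$ holomorphic and symmetric. Hence $U$ identifies the holomorphic symmetric part of $L^2(\mu_n^V)$ with the holomorphic antisymmetric part of the weighted space.

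Third, I apply the Hörmander--Berndtsson estimate to $u=U\widetilde f-V_nh$, which is orthogonal to holomorphic functions.
\begin{itemize}
\item The hypothesis that $V-\frac{\rho}{2}|\cdot|^2$ is subharmonic gives $\Delta V\geq2\rho$. Hence $\partial_{z}\bar\partial_{z}V\geq\rho/2$, and the complex Hessian of $\phi$ satisfies $\partial\bar\partial\phi\geq\frac{n\rho}{2}I$.
\item The estimate then gives $\|u\|^2\leq\frac{2}{n\rho}\sum_j\|\bar\partial_{z_j}u\|^2$.
\item Since $V_n$ and $V_nh$ are holomorphic, $\bar\partial_{z_j}u=V_n\bar\partial_{z_j}f$.
\item For real $f$ we have $|\bar\partial_{z_j}f|^2=\frac14|\nabla_{z_j}f|^2$.
\end{itemize}
Combining these yields $\|\widetilde f-h\|_{L^2(\mu_n^V)}^2\leq\frac{1}{2n\rho}\int|\nabla f|^2\,\mathrm{d}\mu_n^V$.

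Fourth comes the half-distance step, which is where rotational invariance of $V$ enters.
\begin{itemize}
\item The rotations $z\mapsto\mathrm{e}^{\mathrm{i}\theta}z$ preserve $\mu_n^V$. Expanding in homogeneous components shows that holomorphic and antiholomorphic functions of zero mean are orthogonal in $L^2(\mu_n^V)$.
\item Because $\widetilde f$ is real, its projection onto antiholomorphic symmetric functions is $\overline h$. Here $h$ has zero mean, since the constants are holomorphic and $\widetilde f\perp1$.
\item Therefore $\|\widetilde f\|^2\geq\|h\|^2+\|\overline h\|^2=2\|h\|^2$, and so $\|\widetilde f-h\|^2=\|\widetilde f\|^2-\|h\|^2\geq\frac12\|\widetilde f\|^2$.
\end{itemize}
This gives $\operatorname{Var}_{\mu_n^V}(f)\leq\frac{1}{n\rho}\int|\nabla f|^2\,\mathrm{d}\mu_n^V$. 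As a consistency check, the choice $V=|\cdot|^2$, $\rho=2$ recovers Theorem \ref{th:main}.

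\emph{Item (2).} By rotational invariance, $V(z)=v(|z|)$. Kostlan's argument should extend to this setting: expanding $|V_n|^2$ and integrating out the angles leaves only diagonal terms. This yields that the unordered set $\{|Z_1|,\ldots,|Z_n|\}$ under $\mu_n^V$ has the same law as that of independent variables $R_1,\ldots,R_n$, where $R_k$ has density proportional to $r^{2k-1}\mathrm{e}^{-nv(r)}$ on $(0,\infty)$. The argument is as follows.
\begin{itemize}
\item The potential of $R_k$ is $nv(r)-(2k-1)\log r$. Its second derivative is at least $n\rho$, because $\rho$-convexity of $V$ gives $v''\geq\rho$ and $-\log$ is convex.
\item The Bakry--Émery criterion then gives a logarithmic Sobolev inequality with constant $\frac{2}{n\rho}$ for each $R_k$, and tensorization extends it to the product law.
\item Since $F$ is symmetric, $f$ has the same law as $F(R_1,\ldots,R_n)$. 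Moreover $|\nabla f|^2=\sum_k(\partial_kF)^2(|z_1|,\ldots,|z_n|)$, which equals in law the squared gradient of $F$ in the product setting.
\end{itemize}
This yields $\operatorname{Ent}_{\mu_n^V}(f^2)\leq\frac{2}{n\rho}\int|\nabla f|^2\,\mathrm{d}\mu_n^V$.

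\emph{Expected main obstacle.} The delicate step is the functional-analytic justification in item (1). The holomorphic projection must be well defined in $L^2(\mathrm{e}^{-\phi})$ for a general $\mathcal{C}^2$ plurisubharmonic weight, and the divisibility of antisymmetric $L^2$-holomorphic functions by $V_n$ must hold beyond polynomials. Finally, the Hörmander estimate must be applied to $u$, which is not compactly supported. I would first prove the estimate on a dense class (compactly supported data and weight truncations), then pass to the limit, and handle divisibility locally via the Weierstrass division theorem on the hyperplanes $z_j=z_k$.
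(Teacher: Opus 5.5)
Your proposal is correct and follows the paper's proof. Item (1) uses the same chain as the paper: the Vandermonde isometry onto the weighted space $\mathrm{e}^{-\phi}$, holomorphic projection plus divisibility, the Hörmander--Berndtsson estimate with gap $n\rho/2$, and the rotation-invariance half-distance lemma. Item (2) uses the same Kostlan decoupling followed by Bakry--Émery and tensorization.

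The only deviation is in item (2). You apply Bakry--Émery directly to the one-dimensional law $r^{2k-1}\mathrm{e}^{-nv(r)}\mathrm{d}r$ on $(0,\infty)$, using $v''\geq\rho$ and the convexity of $-\log$. The paper instead realizes this law as that of $|X_k|$ for a uniformly log-concave vector $X_k$ on $\mathbb{R}^{2k}$, which requires checking that $\psi(|\cdot|)$ remains convex there, and then contracts via the $1$-Lipschitz map $x\mapsto|x|$. Both routes give the constant $2/(n\rho)$.
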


The constants may not be sharp and the optimal functions are unknown. For
$\rho=2$ and $V=\left|\cdot\right|^2$, we recover the Poincaré inequality of
Theorem \ref{th:main} and the log-Sobolev inequality of Theorem \ref{th:rlsi}.

The first property is reminiscent of \cite{DePhilippisShenfeld}.

\begin{proof}
  Follows by a careful reading of the proof of Theorem \ref{th:main} in
  Section \ref{se:proof:th:main} and of Theorem \ref{th:rlsi} in Section
  \ref{se:proof:th:rlsi}. Indeed, for the Poincaré inequality, the Hermite
  proof of the Gaussian $\bar\partial$ Poincaré inequality of Lemma
  \ref{le:dbar-gap}, which is purely Gaussian, can be replaced by the
  Hörmander--Berndtsson estimate discussed in Remark \ref{rm:horber}, with gap
  $n\rho/2$. The rotational invariance of $V$ is needed for the
  holomorphic--antiholomorphic orthogonality in Lemma
  \ref{le:projection-geometry}, whose proof applies verbatim to $\mu_n^V$.

  For the log-Sobolev inequality, following
  \cite[Th.~26]{HoughKrishnapurPeresViragIndependence}, the Kostlan
  observation remains available since we deal with a planar determinantal
  point process of $n$ particles with a radial potential, and we get this
  time,
  denoting $V(z)=Q(|z|)$, the probability measure
  \begin{equation}
    \nu^V_{n,k}=\frac{1}{Z^V_{n,k}}r^{2k-1}
    \mathrm{e}^{-nQ(r)}\mathbf{1}_{r\geq0}\mathrm{d}r,
  \end{equation}
  see also \cite{zbMATH06347751}. This is the law of $|X_k|$ where $X_k$ is a
  random vector of $\mathbb{R}^{2k}$ with density proportional to
  $\mathrm{e}^{-nQ(\left|\cdot\right|)}$. Since $V$ is radial and
  $V-\frac{\rho}{2}\left|\cdot\right|^2$ is convex on $\mathbb{R}^2$, the
  function $Q(\left|\cdot\right|)-\frac{\rho}{2}\left|\cdot\right|^2$ is
  convex on $\mathbb{R}^{2k}$. Indeed, the convexity of
  $\psi(\left|\cdot\right|)$ on $\mathbb{R}^2$, where
  $\psi(r)=Q(r)-\frac{\rho}{2}r^2$, implies that $\psi$ is convex and
  nondecreasing on $[0,\infty)$, hence $\psi(\left|\cdot\right|)$ is convex in
  any Euclidean space. Therefore, the Bakry--Émery curvature
  criterion\footnote{Alternatively, the Caffarelli contraction theorem, or the
    Cordero--Erausquin transport theorem.}, see \cite{BakryGentilLedoux},
  gives that the law of $X_k$ satisfies a log-Sobolev inequality with constant
  $2/(n\rho)$, for the pure Euclidean gradient. The same holds for $\nu^V_{n,k}$
  above since $x\mapsto|x|$ is $1$-Lipschitz. The remaining tensorization
  argument and gradient transformation are completely the same as in the
  quadratic potential case.
\end{proof}

\subsubsection{Open problems}

The personal interest of the author in these questions emerged during the
preparation of \cite*{zbMATH06371853}, see also
\cite{RosenzweigSerfatyModulatedLSI} for another side. At least for linear
statistics, they make sense given the available central limit theorems.

\begin{problem}[Arbitrary inverse temperatures]
  % The proofs of Theorem \ref{th:main} and Theorem \ref{th:rlsi} rely on the
  % fact that $\beta_n=n^2$ in \eqref{eq:boltzmanngibbs}.
  How about Poincaré and log-Sobolev inequalities for the log-gas obtained
  when replacing $\beta_n=n^2$ by an arbitrary positive value in
  \eqref{eq:boltzmanngibbs}? Dimension-free log-Sobolev inequalities when
  $\beta_n$ is proportional to $n$ are obtained in \cite{rosenzweig}.
\end{problem}

\begin{problem}[Higher-dimensional log-gases]
  How about Poincaré and log-Sobolev inequalities for high-dimensional
  Riesz/Coulomb log-gases? Uniform Poincaré inequalities for mean-field
  Coulomb gases in dimension $d\geq3$ under weak coupling are obtained in
  \cite{becker-menegaki}.
\end{problem}

\subsection{About this article}

The author completed the first version of this article in January 2026, and
decided to explore related questions before making it public. Although the
author wrote and independently checked the final manuscript, the ChatGPT AI
was used for mathematical exploration and proofreading, and the Gemini AI was
used as a second proofreading tool. The mathematical motivation to study the
complex Ginibre log-gas came almost twenty years ago, while discovering random
matrix theory with a culture in statistical mechanics and probabilistic
functional analysis. First attempts were made in \cite*{zbMATH06371853},
\cite*{BolleyChafaiFontbona}, and \cite*{ChafaiHardyMaida}. The author had
already in mind the statement of Theorem \ref{th:main}, which is the direct
analogue of the GUE case in \cite{ChafaiLehec}, and the possible usage of
holomorphicity and Hörmander--Berndtsson theory as in
\cite{CorderoErausquinBerndtssonPrekopa}. However, the final idea of the
decomposition used in the proof of Theorem \ref{th:main} is due to ChatGPT.
Seven months after completing this proof, the author became aware of the
independent preprint of \cite{suzuki2026spectralgapunlabelledginibre}, made
public on August 18, 2026. Up to normalization of the Dirichlet form, its
finite-particle result is equivalent to Theorem \ref{th:main}, and its proof
uses essentially the same method. Our presentation nevertheless differs in
organization and emphasis and may be of independent interest. The first
version of this article was made public on August 19, 2026.

Still regarding computer-based automation, the Julia programming language was
used on ordinary laptop and desktop computers to conduct the numerical
experiments and create the graphics.

Finally, the author is grateful to Jeff Yao and the Tianyuan Mathematics
Research Center, as well as to Pierre Tarrès and the NYU-ECNU Institute of
Mathematical Sciences at NYU Shanghai, for providing remarkable working
conditions.

\section{Proof of Theorem \ref{th:main}}
\label{se:proof:th:main}

We use several steps. We first transfer the Ginibre Dirichlet form to a
Gaussian complex-analytic form by multiplication with the Vandermonde
polynomial. We then identify the relevant holomorphic projection and establish
the projection geometry behind the real half-distance estimate. Finally, we
verify sharpness on the center-of-mass observables and conclude the theorem.

\subsection{Vandermonde transform}

This step exploits the symmetry of the test functions, as well as the
holomorphy and antisymmetry of the Vandermonde determinant $V_n$. The
Vandermonde transform maps $f:\mathbb{C}^n\to\mathbb{C}$ to the product
$g=V_nf$, and we have
\begin{equation}\label{eq:vand-isometry}
  \|g\|_{L^2(\gamma_n)}^2=c_n'\|f\|_{L^2(\mu_n)}^2.
\end{equation}
It is convenient to introduce the normalized Vandermonde transform
$U:L^2(\mu_n)\to L^2(\gamma_n)$:
\begin{equation}\label{eq:U}
  Uf=\frac{1}{\sqrt{c_n'}}V_nf.
\end{equation}
Since $V_n\neq0$ almost everywhere, the linear map $U$ is unitary, it is in
particular an isometry: 
\begin{equation}
  \|Uf\|_{L^2(\gamma_n)}=\|f\|_{L^2(\mu_n)}
\end{equation}
for all $f\in L^2(\mu_n)$. In mathematical physics, this is known as a ground
state transform.

Since $V_n$ is a determinant, it is antisymmetric (or alternating) under
particle permutations:
\begin{equation}\label{eq:vand-alternating}
  V_n(z_{\sigma(1)},\ldots,z_{\sigma(n)})=\operatorname{sgn}(\sigma)V_n(z_1,\ldots,z_n).
\end{equation}
Also if $f$ is symmetric then $g$ is antisymmetric: the antisymmetry of the
Vandermonde polynomial $V_n$ is preserved when multiplied by a symmetric
function.

We introduce the partial complex derivatives with respect to
$z_k=x_k+\mathrm{i}y_k$ and $\bar z_k=x_k-\mathrm{i}y_k$:
\begin{equation}\label{eq:complex-derivatives}
  \partial_{z_k}=\frac12(\partial_{x_k}-\mathrm{i}\partial_{y_k}),
  \quad\text{and}\quad
  \bar\partial_{z_k}=\frac12(\partial_{x_k}+\mathrm{i}\partial_{y_k}).
\end{equation}

\begin{lemma}[Dirichlet form of real-valued functions]\label{le:grad}
  For all $n\geq1$ and $f\in\mathcal{C}^\infty_c(\mathbb{C}^n,\mathbb{R})$,
  \begin{equation}
    \int_{\mathbb{C}^n}|\nabla f|^2\mathrm{d}\mu_n
    =\frac{4}{c_n'}\sum_{k=1}^n\|\bar\partial_{z_k}g\|_{L^2(\gamma_n)}^2
    \quad\text{where}\quad
    g=fV_n.\label{eq:trans}
  \end{equation}
\end{lemma}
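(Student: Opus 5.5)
The plan is a pointwise computation followed by integration; the only ingredients are the holomorphy of $V_n$ and the real-valuedness of $f$. First, for a real-valued $\mathcal{C}^1$ function $f$, the complex derivatives \eqref{eq:complex-derivatives} give
\[
|\nabla_{z_k}f|^2=(\partial_{x_k}f)^2+(\partial_{y_k}f)^2=4|\bar\partial_{z_k}f|^2 .
\]
The reason is that $\bar\partial_{z_k}f=\frac12(\partial_{x_k}f+\mathrm{i}\partial_{y_k}f)$, and its modulus squared is $\frac14|\nabla_{z_k}f|^2$ precisely because both partial derivatives are real. Summing over $k$ yields $|\nabla f|^2=4\sum_{k=1}^n|\bar\partial_{z_k}f|^2$ pointwise on $\mathbb{C}^n$.

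Second, since $V_n$ is a polynomial in $z_1,\ldots,z_n$ only, it is holomorphic, so $\bar\partial_{z_k}V_n=0$ for every $k$. The Leibniz rule then gives $\bar\partial_{z_k}g=\bar\partial_{z_k}(fV_n)=V_n\,\bar\partial_{z_k}f$ pointwise, and therefore
\[
|\bar\partial_{z_k}g|^2=|V_n|^2|\bar\partial_{z_k}f|^2 .
\]
Everything here is smooth because $f$ is $\mathcal{C}^\infty_c$ and $V_n$ is polynomial, so $g$ is smooth and compactly supported and all integrals below are finite.

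Third, I integrate against $\gamma_n$ and use \eqref{eq:mu}, namely $\mathrm{d}\mu_n=\frac{1}{c_n'}|V_n|^2\mathrm{d}\gamma_n$:
\[
\frac{4}{c_n'}\sum_{k=1}^n\int|\bar\partial_{z_k}g|^2\,\mathrm{d}\gamma_n
=4\sum_{k=1}^n\int|\bar\partial_{z_k}f|^2\,\frac{|V_n|^2}{c_n'}\,\mathrm{d}\gamma_n
=\int|\nabla f|^2\,\mathrm{d}\mu_n .
\]
This is exactly \eqref{eq:trans}.

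There is no real obstacle. The only point needing care is that $f$ must be real-valued; for complex $f$ the identity would also pick up the terms $|\partial_{z_k}f|^2$. Neither symmetry nor compact support away from $\mathbb{C}^n_{=}$ is needed at this stage.
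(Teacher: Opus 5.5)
Your proposal is correct and follows the paper's proof essentially verbatim: the pointwise identity $|\nabla f|^2=4\sum_{k}|\bar\partial_{z_k}f|^2$ for real-valued $f$, then $\bar\partial_{z_k}(V_nf)=V_n\bar\partial_{z_k}f$ by holomorphy of $V_n$, then integration using $\mathrm{d}\mu_n=\frac{1}{c_n'}|V_n|^2\mathrm{d}\gamma_n$. Your side remark is also accurate: for complex-valued $f$ one would get $2\sum_k\bigl(|\partial_{z_k}f|^2+|\bar\partial_{z_k}f|^2\bigr)$ instead.
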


\begin{proof}
  Take $f\in\mathcal{C}^\infty_c(\mathbb{C}^n,\mathbb{R})$. Then
  $4|\partial_{z_k}f|^2=4|\bar\partial_{z_k}f|^2=|\partial_{x_k}f|^2+|\partial_{y_k}f|^2=|\nabla_{z_k}f|^2$,
  in particular
  \begin{equation}\label{eq:real-gradient}
    |\nabla f|^2=\sum_{k=1}^n|\nabla_{z_k}f|^2=4\sum_{k=1}^n |\bar\partial_{z_k}f|^2.
  \end{equation}
  Moreover, since $V_n$ is holomorphic, we have
  \begin{equation}\label{eq:dbar-vand}
    \bar\partial_{z_k}(V_n f)=V_n\bar\partial_{z_k}f.
  \end{equation}
  Putting all together, it follows that if
  $f\in\mathcal{C}^\infty_c(\mathbb{C}^n,\mathbb{R})$, then, with $g=V_nf$,
  \begin{align}
    \int_{\mathbb{C}^n}|\nabla f|^2\mathrm{d}\mu_n
    &=4\int_{\mathbb{C}^n}\sum_{k=1}^n|\bar\partial_{z_k} f|^2\mathrm{d}\mu_n\\
    &=\frac{4}{c_n'}\int_{\mathbb{C}^n}\sum_{k=1}^n|V_n\bar\partial_{z_k}f|^2\mathrm{d}\gamma_n\\
    &=\frac{4}{c_n'}\sum_{k=1}^n\int_{\mathbb{C}^n}|\bar\partial_{z_k}g|^2\mathrm{d}\gamma_n
      =\frac{4}{c_n'}\sum_{k=1}^n\|\bar\partial_{z_k}g\|_{L^2(\gamma_n)}^2.%\label{eq:trans}.
  \end{align}
\end{proof}

\subsection{Gaussian Hörmander--Berndtsson estimate}

This step exploits the strict plurisubharmonicity of the energy $n|z|^2$ of
the Gaussian Boltzmann--Gibbs measure $\gamma_n\propto\mathrm{e}^{-n|z|^2}$.

We denote by $\mathcal{C}^\infty_c(\mathbb{C}^n,\mathbb{C})$ the space of
functions $\mathbb{C}^n\to\mathbb{C}$ with real and imaginary parts in
$\mathcal{C}^\infty_c(\mathbb{C}^n,\mathbb{R})$. We recall the
differential-form-valued Dolbeault operators, given for 
$g\in\mathcal{C}^\infty(\mathbb{C}^n,\mathbb{C})$ by
\begin{equation}
  \partial g=\sum_{k=1}^n\partial_{z_k}g\mathrm{d}z_k
  \quad\text{and}\quad
  \bar\partial g=\sum_{k=1}^n\bar\partial_{z_k} g\mathrm{d}\bar{z}_k.
\end{equation}
See \cite{HormanderComplexAnalysisSeveralVariables},
\cite{BerndtssonThingsDbar}, and \cite{demailly}.

\begin{lemma}[Gaussian $\bar\partial$ Poincaré inequality or spectral
  gap]\label{le:dbar-gap}
  For all $n\geq1$ and $g\in\mathcal{C}^\infty_c(\mathbb{C}^n,\mathbb{C})$, 
  \begin{equation}%\label{eq:dbar-gap}
    \sum_{k=1}^n\|\bar\partial_{z_k}g\|_{L^2(\gamma_n)}^2
    \geq
    n\operatorname{dist}_{L^2(\gamma_n)}(g,\mathcal{H}_n)^2
  \end{equation}
  where $\mathcal{H}_n$ is the space of holomorphic functions on
  $\mathbb{C}^n$ that are in $L^2(\gamma_n)$.
\end{lemma}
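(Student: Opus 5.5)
The plan is to expand $g$ in the orthonormal complex Hermite basis of $L^2(\gamma_n)$ and to check that each operator $\bar\partial_{z_k}$ lowers the antiholomorphic degree in a controlled way. For one variable, with the Gaussian $\gamma_1=\mathcal{N}_{\mathbb{C}}(0,1/n)$, let $\mathrm{H}^{(n)}_{a,b}(z)=\mathrm{H}_{a,b}(\sqrt n z,\sqrt n\bar z)$, where $\mathrm{H}_{a,b}$ is the normalized complex Hermite polynomial from the footnote to Theorem \ref{th:poly}. Tensor products over $k=1,\ldots,n$ of these polynomials, indexed by multi-indices $(p,q)\in\mathbb{N}^n\times\mathbb{N}^n$, give an orthonormal basis of $L^2(\gamma_n)$. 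I expect this basis to satisfy the following.
\begin{enumerate}
\item $\bar\partial_{z_k}\mathrm{H}^{(n)}_{p,q}=\sqrt{n q_k}\,\mathrm{H}^{(n)}_{p,q-e_k}$. This is the standard annihilation relation, since $\bar\partial_{z}$ acts as $\sqrt n$ times the annihilation operator on the $\bar z$-degree.
\item $\mathcal{H}_n$ is exactly the closed span of the basis elements with $q=0$. These are the normalized monomials $\sqrt{n^{|p|}/p!}\,z^p$, which form an orthonormal basis of the Bargmann--Fock space.
\end{enumerate}
I would verify relation 1 from the Rodrigues formula, or equivalently from the generating function $\exp(sz+t\bar z-st/n)$ up to scaling.

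Granting these two facts, take $g=\sum_{p,q}c_{p,q}\mathrm{H}^{(n)}_{p,q}$. For distinct $q$, the vectors $q-e_k$ are distinct, so the images remain orthogonal and
\begin{equation}
  \sum_{k=1}^n\|\bar\partial_{z_k}g\|_{L^2(\gamma_n)}^2
  =\sum_{p,q}n|q|\,|c_{p,q}|^2
  \geq n\sum_{p,\,q\neq0}|c_{p,q}|^2
  =n\operatorname{dist}_{L^2(\gamma_n)}(g,\mathcal{H}_n)^2 .
\end{equation}
Here $|q|=q_1+\cdots+q_n\geq1$ whenever $q\neq0$. Equivalently, this says that the number operator $\mathcal{N}_n=\sum_k\bar\partial_{z_k}^*\bar\partial_{z_k}$ (in the notation of Theorem \ref{th:deficit2}) is diagonal with eigenvalue $n|q|$, and that its kernel is $\mathcal{H}_n$.

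The main obstacle is justifying the first equality, meaning the interchange of $\bar\partial_{z_k}$ with the Hermite series, for $g\in\mathcal{C}^\infty_c$. I would handle it by duality rather than by termwise differentiation. The coefficient of $\bar\partial_{z_k}g$ on $\mathrm{H}^{(n)}_{p,q-e_k}$ is $\langle\bar\partial_{z_k}g,\mathrm{H}^{(n)}_{p,q-e_k}\rangle_{L^2(\gamma_n)}$. Integrating by parts against the Gaussian weight, which is legitimate because $g$ has compact support, this equals $\langle g,\bar\partial_{z_k}^*\mathrm{H}^{(n)}_{p,q-e_k}\rangle$. The adjoint $\bar\partial_{z_k}^*=-\partial_{z_k}+n\bar z_k$ is the creation operator, and it sends $\mathrm{H}^{(n)}_{p,q-e_k}$ to $\sqrt{nq_k}\,\mathrm{H}^{(n)}_{p,q}$. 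This gives the coefficient formula, and Parseval applied to $\bar\partial_{z_k}g\in L^2(\gamma_n)$ then yields the identity. As an alternative to this Hermite argument, one could integrate the commutation relation $[\bar\partial_{z_k},\bar\partial_{z_k}^*]=n$ to obtain Hörmander's $L^2$ estimate with weight $n|z|^2$. The Hermite route is more direct, and it also produces the sum-of-squares identity used in Theorem \ref{th:deficit1}.
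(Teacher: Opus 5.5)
Your proposal is correct and follows the paper's proof essentially verbatim. Like the paper, you expand $g$ in the complex Hermite basis of $L^2(\gamma_n)$, identify $\mathcal{H}_n$ with the closed span of the $q=0$ modes, obtain the coefficients of $\bar\partial_{z_k}g$ by Gaussian integration by parts against the creation operator $-\partial_{z_k}+n\bar z_k$ (namely $\sqrt{n(q_k+1)}\,c_{p,q+e_k}$), and conclude by Parseval using $|q|\geq1$ away from the holomorphic modes; your duality justification of the interchange is exactly the step the paper uses.
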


\begin{proof}
  Let $H_{p,q}$ be the Hermite polynomials associated to $\gamma_n$, see
  Section \ref{se:hermite}. For all
  $g\in\mathcal{C}^\infty_c(\mathbb{C}^n,\mathbb{C})$, we write
  $g=\sum_{p,q}c_{p,q}H_{p,q}$ in $L^2(\gamma_n)$, which gives
  \begin{equation}
    \operatorname{proj}_{\mathcal{H}_n}(g)=\sum_p c_{p,0}H_{p,0},
    \quad\text{and hence}\quad
    \operatorname{dist}(g,\mathcal{H}_n)^2
    =\sum_{p}\sum_{q\neq0}|c_{p,q}|^2.
  \end{equation}
  % On the other hand, we have the ``lowering relation''
  % $\bar\partial_{z_k}H_{p,q}=\sqrt{nq_k}\,H_{p,q-e_k}$ where $e_k$ is the
  % $k$-th element of the canonical basis of $\mathbb{R}^n$, with convention
  % $H_{p,q-e_k}=0$ when $q_k=0$.
  By integration by parts and the lowering relation
  \eqref{eq:hermite:lowering}, we get
  $\langle\bar\partial_{z_k}g,H_{p,q}\rangle_{L^2(\gamma_n)}=\sqrt{n(q_k+1)}c_{p,q+e_k}$.
  Next, the Parseval identity gives, for all
  $g\in\mathcal{C}^\infty_c(\mathbb{C}^n,\mathbb{C})$,
  \begin{equation}
    \sum_{k=1}^n\|\bar\partial_{z_k}g\|_2^2
    =n\sum_{p,q}|q|\,|c_{p,q}|^2
    \ge n\sum_{p,q\neq0}|c_{p,q}|^2,
  \end{equation}
  which is the desired inequality. Equality holds if and only if $c_{p,q}=0$
  for all $|q|\ge2$.
\end{proof}

\begin{remark}[Class of functions and equality cases]
  Lemma \ref{le:dbar-gap} remains valid for all $g\in L^2(\gamma_n)$ with
  $\bar\partial_{z_k}g\in L^2(\gamma_n)$ for all $1\leq k\leq n$ where the
  derivatives are taken in a weak sense (Schwartz distributions). Indeed, a
  standard approximation argument (cutoff and mollification) shows that
  $\mathcal{C}^\infty_c(\mathbb{C}^n,\mathbb{C})$ is dense in the closed
  Gaussian $\bar\partial$-form domain for the graph norm. As an alternative to
  the Hermite approximation proof, there is the Hörmander--Berndtsson
  complex-analytic approach mentioned in Remark \ref{rm:horber}. The proof of
  Lemma \ref{le:dbar-gap} shows that equality is achieved if and only if
  \begin{equation}
    g=\sum_{p\in\mathbb{N}^n}a_pH_{p,0}+\sum_{k=1}^n\sum_{p\in\mathbb{N}^n}b_{k,p}H_{p,e_k}
  \end{equation}
  with
  $\sum_{p\in\mathbb{N}^n}|a_p|^2+\sum_{k=1}^n\sum_{p\in\mathbb{N}^n}|b_{k,p}|^2<\infty$.
  In other words, the equality space takes the form
  \begin{equation}
    \mathcal{H}_n\oplus\overline{\operatorname{span}}\{H_{p,e_k}:p\in\mathbb{N}^n,1\leq
    k\leq n\}.
  \end{equation}
  In particular, under the condition $g\perp\mathcal{H}_n$, equality holds if
  and only if
  \begin{equation}
    g=\sum_{k=1}^n\sum_{p\in\mathbb{N}^n}b_{k,p}H_{p,e_k}.
  \end{equation}
  With our normalization, we get $H_{p,0}(z)=\sqrt{\frac{n^{|p|}}{p!}}z^p$,
  which gives
  \begin{equation}
    H_{p,e_k}
    =\sqrt{n}\bar{z}_kH_{p,0}-\sqrt{p_k}H_{p-e_k,0}
    =(I-\operatorname{proj}_{\mathcal{H}_n})(\sqrt{n}\bar{z}_kH_{p,0})
  \end{equation}
  with $\sqrt{p_k}H_{p-e_k,0}=0$ when $p_k=0$. Finally, note that the zero
  function is the sole compactly supported test function satisfying the
  equality. Every equality case is real-analytic, hence a compactly supported
  equality case must vanish identically.
\end{remark}

\begin{remark}[Hörmander--Berndtsson inequality]\label{rm:horber}
  Lemma~\ref{le:dbar-gap} is the flat constant-curvature case of the weighted
  $L^2$ theory for the $\bar\partial$ equation developed by Lars Hörmander and
  Bo Berndtsson. More precisely, the density of $\gamma_n$ is proportional to
  $\mathrm{e}^{-\Phi(z)}$ where $\Phi(z):=n\sum_{k=1}^n|z_k|^2$, and
  \begin{equation}
    % {(\Phi_{j\bar k})}_{1\leq j,k\leq n}=
    {(\partial_{z_j}\bar\partial_{z_k}\Phi)}_{1\leq j,k\leq n}=nI_n.
  \end{equation}
  Since $\Phi-n\left|\cdot\right|^2$ is plurisubharmonic, it follows from
  \citet[Th.~1.1.1]{BerndtssonDbarMethods} that for all $\bar\partial$-closed
  $(0,1)$-form $\alpha=\sum_{k=1}^n\alpha_k\mathrm{d}{\bar z_k}$, in the sense
  that $\bar\partial\alpha=0$, with
  $ \sum_{k=1}^n\|\alpha_k\|_{L^2(\gamma_n)}^2<\infty$, there exists a
  solution $u\in L^2(\gamma_n)$ of the equation $\bar\partial u=\alpha$, in
  the sense of Schwartz distributions, satisfying %the quantitative estimate
  \begin{equation}\label{eq:flat-berndtsson}
    \|u\|_{L^2(\gamma_n)}^2
    \leq
    \int_{\mathbb{C}^n}\langle (\Phi_{j\bar k})^{-1}\alpha,\alpha\rangle_{\mathbb{C}^n}\mathrm{d}\gamma_n
    =\frac{1}{n}\sum_{k=1}^n\int_{\mathbb{C}^n}|\alpha_k|^2\mathrm{d}\gamma_n.
  \end{equation}
  This cannot hold for all solutions: if $h\in\mathcal{H}_n$, then $u+h$ is
  also a solution and can have arbitrarily large $L^2(\gamma_n)$ norm. To get
  Lemma~\ref{le:dbar-gap}, we take\footnote{It is closed since
    $\bar\partial\alpha=\bar\partial^2g=0$. The converse, stating that every
    $\bar\partial$-closed form is locally $\bar\partial$-exact is known as the
    $\bar\partial$-Poincaré or Dolbeault %--Grothendieck
    lemma, see \cite[Th.~2.3.3 and
    p.59]{HormanderComplexAnalysisSeveralVariables}. The Hörmander--Berndtsson
    theorem is stronger: under weighted $L^2$ hypotheses, it constructs a
    global solution and gives a quantitative norm estimate.}
  $\alpha=\bar\partial g$ and note that \eqref{eq:flat-berndtsson} holds for
  $u_*=g-\operatorname{proj}_{\mathcal{H}_n}(g)$. Indeed, from
  $\bar\partial(u-u_*)=0$ we get $u-u_*\in\mathcal{H}_n$, while
  $u_*=g-\operatorname{proj}_{\mathcal{H}_n}(g)\perp\mathcal{H}_n$, therefore
  $\|u\|_{L^2(\gamma_n)}^2=\|u_*\|_{L^2(\gamma_n)}^2+\|u-u_*\|_{L^2(\gamma_n)}^2\geq
  \|u_*\|_{L^2(\gamma_n)}^2$ (Pythagoras), hence \eqref{eq:flat-berndtsson}
  for $u_*$.
\end{remark}

\begin{remark}[Closedness of $\mathcal{H}_n$]
  The space $\mathcal{H}_n$ is known as the Segal--Bargmann space, Bargmann
  space, or Bargmann--Fock space associated to the weight $\gamma_n$. It is a
  closed subspace of $L^2(\gamma_n)$, see
  \cite[Prop.~14.15,~pp.~295--296]{HallQuantumTheory}. The proof uses a local
  pointwise estimate to show that an $L^2$-convergent sequence of entire
  functions converges locally uniformly, so its $L^2$ limit has an entire
  representative. It has nothing to do with probability or Gaussianity, and
  relies on the fact that the Radon--Nikodym derivative of the weight is
  bounded below by a positive constant on each compact.
\end{remark}

\subsection{Vandermonde holomorphic divisibility}

This step exploits symmetry of $f$ and of the density of $\gamma_n$, together
with holomorphy and antisymmetry for $V_n$.

\begin{lemma}[Divisibility]\label{le:divisibility}
  Let $\mathcal{H}_n^{\mathrm{asym}}$ be the subspace of $\mathcal{H}_n$ of
  antisymmetric elements of $\mathcal{H}_n$. Then
  \begin{equation}
    \mathcal{H}_n^{\mathrm{asym}}=V_n\mathcal{H}_n^{\mathrm{div}}
    \quad\text{where}\quad
    \mathcal{H}_n^{\mathrm{div}} = \{h\in
    L^2(\mu_n):V_nh\in\mathcal{H}_n^{\mathrm{asym}}\}.
  \end{equation}
  Equivalently, for every antisymmetric holomorphic $g\in L^2(\gamma_n)$, we have
  $g=V_nf$ for a symmetric holomorphic $f\in L^2(\mu_n)$. Moreover, for every
  symmetric $f\in L^2(\mu_n)$,
  \begin{equation}%\label{eq:distance-transfer}
    \operatorname{dist}_{L^2(\gamma_n)}(V_nf,\mathcal{H}_n)^2
    =
    c_n'\operatorname{dist}_{L^2(\mu_n)}(f,\mathcal{H}_n^{\mathrm{div}})^2.
  \end{equation}
\end{lemma}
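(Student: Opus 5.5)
We prove the two assertions of Lemma \ref{le:divisibility} in turn: first the factorization $\mathcal{H}_n^{\mathrm{asym}}=V_n\mathcal{H}_n^{\mathrm{div}}$, then the distance identity.

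\textbf{Factorization.} Let $g\in\mathcal{H}_n$ be antisymmetric. For each pair $j<k$, $g$ vanishes on the hyperplane $\{z_j=z_k\}$, since swapping $z_j$ and $z_k$ changes the sign of $g$ while fixing every point of that hyperplane. An entire function vanishing on $\{z_j-z_k=0\}$ is divisible by $z_j-z_k$ in the ring $\mathcal{O}(\mathbb{C}^n)$ of entire functions. To see this, change linear coordinates so that $w=z_j-z_k$ is one of them and use
\begin{equation}
  g(w,\cdot)=w\int_0^1\partial_wg(tw,\cdot)\,\mathrm{d}t ,
\end{equation}
whose quotient is entire. The linear forms $z_j-z_k$ are pairwise non-associate irreducibles. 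We iterate over the pairs, each time noting that the quotient still vanishes on the remaining hyperplanes $\{z_{j'}=z_{k'}\}$ off a proper analytic subset, hence everywhere by continuity. This gives $g=V_nf$ with $f$ entire. Since $V_n$ is antisymmetric and $g$ is antisymmetric, $f$ is symmetric off $\mathbb{C}^n_{=}$, hence everywhere by continuity.

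Integrability is automatic from the isometry \eqref{eq:vand-isometry}:
\begin{equation}
  c_n'\|f\|_{L^2(\mu_n)}^2=\|g\|_{L^2(\gamma_n)}^2<\infty .
\end{equation}
So $f\in\mathcal{H}_n^{\mathrm{div}}$. The reverse inclusion is the definition of $\mathcal{H}_n^{\mathrm{div}}$. This also shows that $\mathcal{H}_n^{\mathrm{div}}$ consists of symmetric entire functions.

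\textbf{Distance identity.} Let $f\in L^2(\mu_n)$ be symmetric, so $g=V_nf$ is antisymmetric. For $\sigma\in\mathfrak{S}_n$ set
\begin{equation}
  (T_\sigma h)(z)=\operatorname{sgn}(\sigma)\,h(z_{\sigma(1)},\ldots,z_{\sigma(n)}) ,
\end{equation}
and let $P=\frac{1}{n!}\sum_\sigma T_\sigma$. The measure $\gamma_n$ is permutation invariant, so each $T_\sigma$ is unitary on $L^2(\gamma_n)$ and preserves $\mathcal{H}_n$. Hence $P$ is the orthogonal projection onto the antisymmetric subspace, and it commutes with $\operatorname{proj}_{\mathcal{H}_n}$. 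Since $Pg=g$, we get
\begin{equation}
  \operatorname{proj}_{\mathcal{H}_n}g=P\operatorname{proj}_{\mathcal{H}_n}g\in\mathcal{H}_n^{\mathrm{asym}},
  \qquad
  \operatorname{dist}(g,\mathcal{H}_n)=\operatorname{dist}(g,\mathcal{H}_n^{\mathrm{asym}}).
\end{equation}

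Now transport through the unitary $U$ of \eqref{eq:U}. By the factorization step, $U$ maps $\mathcal{H}_n^{\mathrm{div}}$ onto $c_n'^{-1/2}\mathcal{H}_n^{\mathrm{asym}}$, which is the same subspace $\mathcal{H}_n^{\mathrm{asym}}$. As $g=\sqrt{c_n'}\,Uf$ and $U$ is an isometry,
\begin{equation}
  \operatorname{dist}_{L^2(\gamma_n)}(g,\mathcal{H}_n^{\mathrm{asym}})^2
  =c_n'\operatorname{dist}_{L^2(\gamma_n)}(Uf,U\mathcal{H}_n^{\mathrm{div}})^2
  =c_n'\operatorname{dist}_{L^2(\mu_n)}(f,\mathcal{H}_n^{\mathrm{div}})^2 ,
\end{equation}
which is the claimed identity. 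Closedness of $\mathcal{H}_n^{\mathrm{div}}$ in $L^2(\mu_n)$ follows because $\mathcal{H}_n^{\mathrm{asym}}$ is closed, by the closedness of $\mathcal{H}_n$ and continuity of $P$. So the distances are attained by orthogonal projections.

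\textbf{Main obstacle.} The delicate step is the global divisibility in the ring of entire functions, namely passing from vanishing on each hyperplane to divisibility by the full product. The cleanest route is the one-variable division formula above in adapted linear coordinates, applied inductively. The rest is Hilbert-space bookkeeping.
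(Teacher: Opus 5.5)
Your proof is correct. Its overall architecture matches the paper's: vanishing on the diagonal hyperplanes, division by $V_n$, symmetry of the quotient by continuity, membership in $L^2(\mu_n)$ from the Vandermonde isometry, and the distance identity via commutation of the permutation action with $\operatorname{proj}_{\mathcal{H}_n}$. Your signed average is only a repackaging of the paper's observation that each $U_\sigma$ commutes with that projection.

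The genuine difference is how you divide by the full product. The paper argues locally. At each point $p$ it uses that $\mathcal{O}_p$ is a unique factorization domain in which the forms $\ell_{jk}$ vanishing at $p$ are nonassociate primes. Hence the germ of $G$ lies in the ideal generated by their product, and $G/V_n$, already defined on $\mathbb{C}^n_{\neq}$, extends holomorphically near every point.

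You instead work globally and elementarily. The formula $g(w,\cdot)=w\int_0^1\partial_wg(tw,\cdot)\,\mathrm{d}t$, in linear coordinates adapted to $w=z_j-z_k$, gives an entire quotient for one factor. Vanishing of the successive quotients on the remaining hyperplanes then propagates by continuity, because each previously used hyperplane meets a remaining one in a proper linear subspace.

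Your route avoids the Weierstrass division theorem, the factoriality of $\mathcal{O}_p$, and any local-to-global patching. The price is that it uses the fact that the divisors are hyperplanes with global adapted linear coordinates. The paper's local-algebra argument is heavier, but it is the standard one and does not depend on the linearity of the $\ell_{jk}$.
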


It is immediate that $\mathcal{H}_n^{\mathrm{asym}}$ is a closed subspace of
$L^2(\gamma_n)$. Note that $\mathcal{H}_n^{\mathrm{div}}$ is a closed subspace
of $L^2(\mu_n)$ since multiplication by $V_n$ is an isometry from $L^2(\mu_n)$
to $L^2(\gamma_n)$, up to the factor $c_n'^{1/2}$.

We have $\mathcal{H}_n^{\mathrm{asym}}\supset V_n\mathcal{H}_n^{\mathrm{div}}$
by definition, and the lemma states that equality holds.

For a symmetric $f\in L^2(\mu_n)$, the product $V_nf$ is automatically
antisymmetric and in $L^2(\gamma_n)$, and membership in
$\mathcal{H}_n^{\mathrm{div}}$ additionally requires that this product be
holomorphic. The lemma states that this is equivalent to $f$ having an entire
symmetric representative. A counterexample is $\sum_{k=1}^n|z_k|^2$, which is
symmetric and in $L^2(\mu_n)$ but does not belong to
$\mathcal{H}_n^{\mathrm{div}}$ because it is not holomorphic.

\begin{proof}
  For $\sigma\in\mathfrak{S}_n$ and $G\in L^2(\gamma_n)$, we set
  $U_\sigma G(z)=G(z_{\sigma(1)},\ldots,z_{\sigma(n)})$, $z\in\mathbb{C}^n$.
  Since the density of $\gamma_n$ is symmetric, the linear map $U_\sigma$ is a
  unitary transform of $L^2(\gamma_n)$, for all $\sigma\in\mathfrak{S}_n$.
  Moreover, since holomorphy is preserved by coordinate permutation, we also
  get $U_\sigma\mathcal{H}_n=\mathcal{H}_n$ for all $\sigma\in\mathfrak{S}_n$.
  This gives the commutation
  $U_\sigma\operatorname{proj}_{\mathcal{H}_n}=\operatorname{proj}_{\mathcal{H}_n}U_\sigma$
  for all $\sigma\in\mathfrak{S}_n$. It follows that the orthogonal projection
  of an antisymmetric element $G$ of $L^2(\gamma_n)$ on $\mathcal{H}_n$ is
  antisymmetric. In particular
  \begin{equation}
    \operatorname{dist}(G,\mathcal{H}_n)=\operatorname{dist}(G,\mathcal{H}_n^{\mathrm{asym}}).
  \end{equation}
  Let $G\in\mathcal{H}_n$ be antisymmetric. If the transposition $(jk)$
  exchanges $z_j$ and $z_k$, then $G\circ(jk)=-G$. Hence $G=0$ on the
  hyperplane $\ell_{jk}:=z_j-z_k=0$. In local coordinates with $\ell_{jk}$ as
  one coordinate, the one-variable Weierstrass divisibility theorem, see
  \cite[Sec.~6.1 and 6.2]{HormanderComplexAnalysisSeveralVariables}, gives
  divisibility by $\ell_{jk}$. Still following \cite[Sec.~6.1 and
  6.2]{HormanderComplexAnalysisSeveralVariables}, to divide by all factors at
  once, we argue locally. Fix a point $p\in\mathbb{C}^n$. In the local ring
  $\mathcal O_p$ of holomorphic germs, the relevant linear forms $\ell_{jk}$
  with $\ell_{jk}(p)=0$ are nonassociate prime elements. Since the germ of $G$
  lies in each principal ideal $(\ell_{jk})$, the Weierstrass division theorem
  or unique-factorization property of $\mathcal O_p$, implies that it lies in
  the ideal generated by their product. The remaining factors $\ell_{jk}$ with
  $\ell_{jk}(p)\ne0$ are units near $p$. Thus $G/V_n$, initially defined on
  $\mathbb{C}^n_{\neq}$, has a holomorphic extension near every point $p$.
  Hence $G=V_n h$ with $h$ entire. On $\mathbb{C}^n_{\neq}$,
  \[
    h(\sigma z)=\frac{G(\sigma z)}{V_n(\sigma z)}
    =\frac{\operatorname{sgn}(\sigma)G(z)}{\operatorname{sgn}(\sigma)V_n(z)}=h(z),
  \]
  and the identity extends across \(\mathbb{C}^n_{=}\), hence \(h\) is
  symmetric. The norm identity
  \[
    \|h\|_{L^2(\mu_n)}^2=\frac{1}{c_n'}\|V_nh\|_{L^2(\gamma_n)}^2
  \]
  gives $h\in L^2(\mu_n)$, and the preceding symmetry and holomorphy
  properties give $h\in\mathcal{H}_n^{\mathrm{div}}$. Conversely, if
  $h\in\mathcal{H}_n^{\mathrm{div}}$, then $V_nh\in\mathcal{H}_n$ and is
  antisymmetric. This proves the first statement in the lemma. The distance
  identity follows from this bijection and the Vandermonde isometry
  \eqref{eq:vand-isometry} for $f-h$.
\end{proof}

\subsection{Holomorphic--antiholomorphic projection geometry}

Set
\[
  \mathcal{H}_n^{\mathrm{div},0}
  =\mathcal{H}_n^{\mathrm{div}}\cap\operatorname{span}\{1\}^{\perp},
  \qquad
  \overline{\mathcal{H}_n^{\mathrm{div},0}}
  =\{\overline h:h\in\mathcal{H}_n^{\mathrm{div},0}\},
\]
and let
$\overline{\mathcal{H}_n^{\mathrm{div}}}
=\{\overline h:h\in\mathcal{H}_n^{\mathrm{div}}\}$.
On the closed subspace of symmetric elements of $L^2(\mu_n)$, denote by
$P$, $Q$, and $P_0$ the orthogonal projections onto
$\mathcal{H}_n^{\mathrm{div}}$,
$\overline{\mathcal{H}_n^{\mathrm{div}}}$, and the constants, respectively.

\begin{lemma}[Holomorphic--antiholomorphic projection geometry]
  \label{le:projection-geometry}
  The following properties hold.
  \begin{enumerate}
  \item
    $\mathcal{H}_n^{\mathrm{div},0}$ and
    $\overline{\mathcal{H}_n^{\mathrm{div},0}}$ are orthogonal.
  \item For every symmetric $f\in L^2(\mu_n)$,
    \begin{equation}\label{eq:two-projection-bound}
      \|Pf\|_{L^2(\mu_n)}^2+\|Qf\|_{L^2(\mu_n)}^2
      \leq
      \|f\|_{L^2(\mu_n)}^2+\|P_0f\|_{L^2(\mu_n)}^2.
    \end{equation}
  \item If $f\in L^2(\mu_n)$ is real-valued, symmetric, and centered, then
    $h=Pf$, its pointwise complex conjugate $\overline h$, and
    $r=f-h-\overline h$, are three pairwise orthogonal functions in $L^2(\mu_n)$, and
    \begin{align}
      \|f\|_{L^2(\mu_n)}^2
      &=2\|h\|_{L^2(\mu_n)}^2+\|r\|_{L^2(\mu_n)}^2,
      \label{eq:projection-geometry-decomposition}\\
      \operatorname{dist}_{L^2(\mu_n)}
      \bigl(f,\mathcal{H}_n^{\mathrm{div}}\bigr)^2
      &=\frac{1}{2}\|f\|_{L^2(\mu_n)}^2
        +\frac{1}{2}\|r\|_{L^2(\mu_n)}^2.
      \label{eq:exact-half-distance}
    \end{align}
    In particular, we have the half-distance estimate 
    \begin{equation}\label{eq:half-distance-paper}
      \operatorname{dist}_{L^2(\mu_n)}
      \bigl(f,\mathcal{H}_n^{\mathrm{div}}\bigr)^2
      \geq\frac{1}{2}\|f\|_{L^2(\mu_n)}^2.
    \end{equation}
  \end{enumerate}
\end{lemma}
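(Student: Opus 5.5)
The plan is to prove (1) by a rotation argument, then derive (2) and (3) from elementary Hilbert space geometry.

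\textbf{Step 1: orthogonality.} Take $h_1,h_2\in\mathcal{H}_n^{\mathrm{div},0}$. We must show $\langle h_1,\overline{h_2}\rangle_{L^2(\mu_n)}=\int h_1h_2\,\mathrm{d}\mu_n=0$. The measure $\mu_n$ is invariant under the diagonal rotation $z\mapsto\mathrm{e}^{\mathrm{i}\theta}z$, since $|V_n|^2$ and $|z|^2$ are invariant. Set $F=h_1h_2$. It is holomorphic and in $L^1(\mu_n)$ by Cauchy--Schwarz, so
\[
  I(\theta)=\int F(\mathrm{e}^{\mathrm{i}\theta}z)\,\mathrm{d}\mu_n(z)=\int F\,\mathrm{d}\mu_n
\]
is constant in $\theta$. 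Expand $F(\mathrm{e}^{\mathrm{i}\theta}z)=\sum_{d\ge0}\mathrm{e}^{\mathrm{i}d\theta}F_d(z)$ into homogeneous components. Averaging over $\theta$ kills every $d\geq1$, which gives $\int F\,\mathrm{d}\mu_n=\int F_0\,\mathrm{d}\mu_n=F(0)$.

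To justify this cleanly, I would work in $L^2(\gamma_n)$. There, $V_nh_i$ expands in the orthogonal monomials $z^p$, and homogeneous components are orthogonal in $L^2(\mu_n)$ by the same rotation invariance. So $h_i=\sum_d h_{i,d}$ with orthogonal homogeneous pieces. Since $\int h_i\,\mathrm{d}\mu_n=\langle h_i,1\rangle=0$ and $1$ is the degree-$0$ piece, we get $h_i(0)=0$. Then
\[
  \int h_1h_2\,\mathrm{d}\mu_n=\sum_{d,d'}\int h_{1,d}h_{2,d'}\,\mathrm{d}\mu_n .
\]
Rotation invariance makes each term vanish unless $d+d'=0$, which forces $d=d'=0$, and those pieces are zero. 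Exchanging the sums is justified by $L^2$ convergence of each expansion together with bilinearity. This step is the main technical point: one must ensure the homogeneous expansion converges in $L^2(\mu_n)$. That follows from the unitary $U$ and the monomial orthogonality in the Bargmann space $\mathcal{H}_n^{\mathrm{asym}}$.

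\textbf{Step 2: the two-projection bound.} On symmetric functions, both $\mathcal{H}_n^{\mathrm{div}}$ and its conjugate contain the constants. Write $P=P_0+P'$ and $Q=P_0+Q'$, where $P'$ and $Q'$ are the projections onto $\mathcal{H}_n^{\mathrm{div},0}$ and its conjugate. By (1) these two ranges are orthogonal, and both are orthogonal to the constants. Hence $P_0+P'+Q'$ is an orthogonal projection, and
\[
  \|P_0f\|^2+\|P'f\|^2+\|Q'f\|^2\leq\|f\|^2 .
\]
Adding $\|P_0f\|^2$ to both sides yields \eqref{eq:two-projection-bound}.

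\textbf{Step 3: the real, centered case.} Let $f$ be real, symmetric, and centered, so $P_0f=0$ and $h=Pf=P'f\in\mathcal{H}_n^{\mathrm{div},0}$. Since complex conjugation is an antiunitary map exchanging $\mathcal{H}_n^{\mathrm{div},0}$ with its conjugate space and fixing the real $f$, we get $Q'f=\overline{P'f}=\overline h$. Therefore $f-h-\overline h$ is orthogonal to both $\mathcal{H}_n^{\mathrm{div},0}$ and $\overline{\mathcal{H}_n^{\mathrm{div},0}}$. Combined with $h\perp\overline h$ from (1), this makes $h$, $\overline h$, and $r$ pairwise orthogonal. Pythagoras, with $\|\overline h\|=\|h\|$, gives \eqref{eq:projection-geometry-decomposition}.

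For the distance, $f$ is centered and hence orthogonal to the constants, so
\[
  \operatorname{dist}(f,\mathcal{H}_n^{\mathrm{div}})^2=\|f\|^2-\|h\|^2 .
\]
Using $\|h\|^2=(\|f\|^2-\|r\|^2)/2$ from \eqref{eq:projection-geometry-decomposition}, this equals $\tfrac12\|f\|^2+\tfrac12\|r\|^2$, which is \eqref{eq:exact-half-distance}. Dropping the $\|r\|^2$ term gives the half-distance estimate \eqref{eq:half-distance-paper}.
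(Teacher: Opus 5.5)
Your proposal is correct and follows essentially the paper's route: rotation invariance of $\mu_n$ and the homogeneous expansion of $V_nh$ in the Bargmann space give nonnegative rotation weights, hence (1), and Bessel/Pythagoras with complex conjugation exchanging the two spaces (the paper's $Q=CPC$) then give (2)--(3). The one step you leave implicit is that each homogeneous component $G_m$ of $V_nh_i$ is itself divisible by $V_n$, which ensures that all weights $m-d_n$ are $\geq0$ and that the weight-zero part is a constant; the paper obtains this by applying the divisibility lemma to each antisymmetric $G_m$, and you could equally get it from the Taylor expansion of the entire representative of $h_i$.
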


\begin{proof}
  For $\theta\in\mathbb{R}$, let
  $T_\theta f(z)=f(\mathrm{e}^{\mathrm{i}\theta}z)$. Since $\mu_n$ is
  invariant under global rotations, ${(T_\theta)}_{\theta\in\mathbb{R}}$ is a
  group of unitary transformations of $L^2(\mu_n)$. Let
  $h\in\mathcal{H}_n^{\mathrm{div}}$, set $G=V_nh$, and write
  $G=\sum_{m\geq0}G_m$ for the homogeneous expansion of $G$ in
  $\mathcal{H}_n$. Each $G_m$ is antisymmetric. By the divisibility Lemma
  \ref{le:divisibility}, each nonzero $G_m$ is divisible by $V_n$. Since $V_n$
  is homogeneous of degree $d_n=n(n-1)/2$, this gives $G_m=V_nh_{m-d_n}$ for a
  symmetric homogeneous holomorphic polynomial $h_{m-d_n}$ of degree $m-d_n$,
  in particular, $G_m=0$ for $m<d_n$. The Vandermonde isometry yields
  \[
    h=\sum_{k\geq0}h_k
    \quad\text{in }L^2(\mu_n),
  \]
  where $h_k$ has rotation weight $k$. Thus $\mathcal{H}_n^{\mathrm{div},0}$
  is the orthogonal sum of the strictly positive rotation-weight spaces,
  whereas $\overline{\mathcal{H}_n^{\mathrm{div},0}}$ is the orthogonal sum of
  the strictly negative rotation-weight spaces. This proves the first
  assertion. Let $P_+$ and $P_-$ be the orthogonal projections onto these two
  centered subspaces. Then $P=P_0+P_+$ and $Q=P_0+P_-$. To get
  \eqref{eq:two-projection-bound}, we observe that the ranges of $P_0$, $P_+$,
  and $P_-$ are pairwise orthogonal, hence the Bessel inequality gives
  \begin{align*}
    \|Pf\|_2^2+\|Qf\|_2^2
    &=2\|P_0f\|_2^2+\|P_+f\|_2^2+\|P_-f\|_2^2\\
    &\leq2\|P_0f\|_2^2+\|f-P_0f\|_2^2
      =\|f\|_2^2+\|P_0f\|_2^2.
  \end{align*}

  Let $C$ be complex conjugation. It is an antiunitary involution and $Q=CPC$.
  If $f$ is real-valued and centered, then $Qf=C(Pf)=\overline h$ and
  $P_0f=0$. Hence $h$, $\overline h$, and $r=f-h-\overline h$ are the
  orthogonal components of $f$ in the ranges of $P_+$, $P_-$, and their common
  orthogonal complement. This proves
  \eqref{eq:projection-geometry-decomposition}. Finally,
  \eqref{eq:exact-half-distance} follows from
  \eqref{eq:projection-geometry-decomposition} since
  \[
    \operatorname{dist}_{L^2(\mu_n)}
    \bigl(f,\mathcal{H}_n^{\mathrm{div}}\bigr)^2
    =\|f-h\|_2^2
    =\|\overline h+r\|_2^2
    =\|h\|_2^2+\|r\|_2^2.
  \]
\end{proof}

\subsection{Proof of the Poincaré inequality}
\label{ss:proof:poincare}

For all $n\geq1$ and all symmetric
$f\in\mathcal{C}^\infty_c(\mathbb{C}^n,\mathbb{R})$ centered with respect to
$\mu_n$, the successive usage of Lemmas \ref{le:grad}, \ref{le:dbar-gap},
\ref{le:divisibility}, and \ref{le:projection-geometry}, gives, denoting
$g=V_nf$,
\begin{align}
  \int_{\mathbb{C}^n}|\nabla f|^2\mathrm{d}\mu_n
  &=\frac{4}{c_n'}\sum_{k=1}^n\int|\bar\partial_{z_k}g|^2\mathrm{d}\gamma_n\\
  &\geq\frac{4}{c_n'}n\operatorname{dist}_{L^2(\gamma_n)}(g,\mathcal{H}_n)^2\label{eq:dbar-gap:inline}\\
  &=4n\operatorname{dist}_{L^2(\mu_n)}(f,\mathcal{H}_n^{\mathrm{div}})^2 \\
  &\geq 2n\|f\|_{L^2(\mu_n)}^2\label{eq:half-distance:inline}.
\end{align}
This shows the desired Poincaré inequality for symmetric and centered elements of
$\mathcal{C}^\infty_c(\mathbb{C}^n,\mathbb{R})$.

Next, consider a symmetric $f\in H^1_{\mathbb{R}}(\mu_n)$ centered with
respect to $\mu_n$. We first take a sequence $(u_m)$ in
$\mathcal{C}^\infty_c(\mathbb{C}^n,\mathbb{R})$ such that $u_m\to f$ as
$m\to\infty$ in $H^1_{\mathbb{R}}(\mu_n)$. Consider its symmetrization
$v_m=\frac{1}{n!}\sum_{\sigma\in\mathfrak{S}_n}u_m(z_{\sigma(1)},\ldots,z_{\sigma(n)})$.
Then thanks to the symmetry of the density of $\mu_n$, the symmetrization
contracts the $L^2(\mu_n)$ norm and the $L^2(\mu_n)$ norm of the gradient,
hence $v_m\to f$ in $H^1_{\mathbb{R}}(\mu_n)$. Now take a fixed symmetric
$\psi\in\mathcal{C}^\infty_c(\mathbb{C}^n,\mathbb{R})$ of unit mean with
respect to $\mu_n$, and define
$w_m=v_m-(\int_{\mathbb{C}^n}v_m\mathrm{d}\mu_n)\psi$. Then
$w_m\in\mathcal{C}^\infty_c(\mathbb{C}^n,\mathbb{R})$, is symmetric, and
centered. Since $f$ is centered and $v_m\to f$ in $L^2(\mu_n)$, we get
$\int_{\mathbb{C}^n}v_m\mathrm{d}\mu_n\to0$, and thus $w_m\to f$ in
$H^1_{\mathbb{R}}(\mu_n)$. This gives the desired Poincaré inequality for
symmetric and centered elements of $H^1_{\mathbb{R}}(\mu_n)$.

Finally, for a general symmetric $f\in H^1_{\mathbb{R}}(\mu_n)$, it suffices
to apply the result to $f-\int_{\mathbb{C}^n}f\mathrm{d}\mu_n$, and to note
that the constants belong to $H^1_{\mathbb{R}}(\mu_n)$ by the usual cutoff
approximation.

%Finally, it can be shown by standard approximation arguments that any centered
%symmetric $f\in H^1_{\mathbb{R}}(\mu_n)$ can be approximated by a sequence of
%centered symmetric elements of
%$\mathcal{C}^\infty_c(\mathbb{C}^n,\mathbb{R})$. 

\subsection{Proof of sharpness}
\label{ss:proof:sharp}

The observables $\varphi_\Re$ and $\varphi_\Im$ are symmetric and in
$\mathcal{C}^\infty(\mathbb{C}^n,\mathbb{R})\cap H^1_{\mathbb{R}}(\mu_n)$. We
have $\nabla\varphi_\Re(z)=(1,0,\ldots,1,0)\in\mathbb{R}^{2n}$ and
$\nabla\varphi_\Im(z)=(0,1,\ldots,0,1)\in\mathbb{R}^{2n}$, for all
$z\in\mathbb{C}^n$, hence
\begin{equation}
  \int_{\mathbb{C}^n}|\nabla\varphi_\Re|^2\mathrm{d}\mu_n
  =\int_{\mathbb{C}^n}|\nabla\varphi_\Im|^2\mathrm{d}\mu_n
  =n.
\end{equation}
On the other hand, by the factorization of Theorem \ref{th:split}, the image
measure of $\mu_n$ by the map
$z\in\mathbb{C}^n\mapsto z_1+\cdots+z_n\in\mathbb{C}$ is
$\mathcal{N}_{\mathbb{C}}(0,1)=\mathcal{N}_{\mathbb{R}}(0,\frac{1}{2})+\mathrm{i}\mathcal{N}_{\mathbb{R}}(0,\frac{1}{2})\equiv\mathcal{N}_{\mathbb{R}}(0,\frac{1}{2})^{\otimes
  2}$, in particular
\begin{equation}
  \int\varphi_\Re\mathrm{d}\mu_n
  =\int\varphi_\Im\mathrm{d}\mu_n=0,\quad
  \int_{\mathbb{C}^n}\varphi_\Re\varphi_\Im\mathrm{d}\mu_n
  =0,
  \quad\text{and}\quad 
  \int_{\mathbb{C}^n}\varphi_\Re^2\mathrm{d}\mu_n
  = \int_{\mathbb{C}^n}\varphi_\Im^2\mathrm{d}\mu_n
  =\frac{1}{2}.
\end{equation}
Therefore equality is achieved in \eqref{eq:pi} when $f$ is any linear
combination $a\varphi_\Re+b\varphi_\Im$, $a,b\in\mathbb{R}$.

\begin{remark}[Equality cases in the two estimates]
  Alternatively, or just for the sake of completeness, we can check that
  equality is achieved in \eqref{eq:dbar-gap:inline} and
  \eqref{eq:half-distance:inline}, when $f=\varphi_\Re$ or $f=\varphi_\Im$.
  Note that in this case, the test function $f$ is
  $\mathcal{C}^\infty(\mathbb{C}^n,\mathbb{R})$ but is not compactly
  supported, however the inequalities \eqref{eq:dbar-gap:inline} and
  \eqref{eq:half-distance:inline} remain valid by approximation. For all
  $z\in\mathbb{C}^n$ we have
  \begin{equation}\label{eq:real-M-split}
    \varphi_\Re(z)=\frac{1}{2}S(z)+\frac{1}{2}\bar{S}(z)
    \quad\text{where}\quad
    S(z)=z_1+\cdots+z_n
    \quad\text{and}\quad
    \bar{S}(z)=\bar{z}_1+\cdots+\bar{z}_n.
  \end{equation}
  The function $S$ is holomorphic and symmetric, hence
  $S\in\mathcal{H}_n^{\mathrm{div},0}$. Its conjugate $\bar{S}$ belongs to
  $\overline{\mathcal{H}_n^{\mathrm{div},0}}$, and these two subspaces are
  orthogonal by Lemma~\ref{le:projection-geometry}. Therefore,
  \begin{equation}\label{eq:projection-real-M}
    \operatorname{proj}_{\mathcal{H}_n^{\mathrm{div}}}(\varphi_\Re)=\frac{1}{2}S,
    \quad\text{and}\quad
    \operatorname{dist}_{L^2(\mu_n)}(\varphi_\Re,\mathcal{H}_n^{\mathrm{div}})^2
    =\frac{1}{4}\|\bar{S}\|^2_{L^2(\mu_n)}
    =\frac{1}{2}\|\varphi_\Re\|^2_{L^2(\mu_n)}.
  \end{equation}
  This proves equality in the half-distance estimate
  \eqref{eq:half-distance:inline}. It remains to check equality in the
  Hörmander--Berndtsson estimate \eqref{eq:dbar-gap:inline}. Let us define
  \begin{equation}\label{eq:g-real-M}
    g = V_n\varphi_\Re = \frac{1}{2}V_nS+\frac{1}{2}V_n\bar{S}.
  \end{equation}
  The first term of the right-hand side is holomorphic and antisymmetric. For
  the second term, we note that
  $\operatorname{proj}_{\mathcal{H}_n}(\bar z_kF)=\frac{1}{n}\partial_{z_k}F$
  for every holomorphic polynomial $F$ and all $1\leq k\leq n$, which gives
  \begin{equation}\label{eq:proj-bar-M-vand}
    \operatorname{proj}_{\mathcal{H}_n}(\bar{S}V_n) = \frac{1}{n}\sum_{k=1}^n\partial_{z_k}V_n.
  \end{equation}
  Recall that the Vandermonde polynomial $V_n(z)$ is translation-invariant in
  the sense that we have $V_n(z_1+c,\ldots,z_n+c) = V_n(z_1,\ldots,z_n)$ for
  all $c\in\mathbb{C}$ and $z\in\mathbb{C}^n$. Differentiating at $c=0$ gives
  \begin{equation}\label{eq:sum-partial-vand-zero}
    \sum_{k=1}^n\partial_{z_k}V_n(z)=0.
  \end{equation}
  It follows that $V_n(z)\bar{S}$ is orthogonal to $\mathcal{H}_n$. Moreover
  it is a first antiholomorphic Hermite mode: it is the degree-one
  antiholomorphic component
  \begin{equation}\label{eq:first-antiholomorphic-mode}
    \sum_{k=1}^n\Bigl(\bar{z}_kV_n-\frac{1}{n}\partial_{z_k}V_n\Bigr)
  \end{equation}
  with the correction term equal to zero by \eqref{eq:sum-partial-vand-zero}.
  Hence it lies in the eigenspace of
  $\sum_{k=1}^n\bar\partial_{z_k}^*\bar\partial_{z_k}$ with eigenvalue $n$. This gives
  equality in Lemma \ref{le:dbar-gap} for
  $g-\operatorname{proj}_{\mathcal{H}_n}g=\frac{1}{2}V_n\bar{S}$, thus equality in \eqref{eq:dbar-gap:inline}.
\end{remark}

\section{Alternative spectral proof of Theorem \ref{th:main}}
\label{se:proof:th:main:spectral}

We give here an alternative proof of the Poincaré inequality of Theorem
\ref{th:main} using the complex expression \eqref{eq:An:complex} of $A_n$ and
a spectral argument. It shares some of the aspects of the previous proof. A
priori, it remains Gaussian and it is not well suited for an extension to
plurisubharmonic potentials. All differential identities below are first
established on the symmetric core
$\mathcal{C}_c^\infty(\mathbb{C}^n_{\neq},\mathbb{C})$.

\begin{proof}

\noindent\emph{Step 1: factorization of the complex expression of $A_n$.}
We use a two-sided factorization by the Vandermonde polynomial.
More precisely, from \eqref{eq:An:complex} with $\alpha_n=n$, we get
\begin{equation}\label{eq:generator-two-sided-factorization}
  -\frac{n}{2}A_nf
  =
  \frac{1}{V_n}\mathcal{N}_{\bar z}(V_nf)
  +
  \frac{1}{\overline V_n}\mathcal{N}_{z}(\overline V_nf)
\end{equation}
where $\mathcal{N}_{\bar z}$ and $\mathcal{N}_{z}$ are special
Ornstein--Uhlenbeck operators, known as Gaussian ``number operators'':
\begin{equation}\label{eq:two-number-operators}
  \mathcal{N}_{\bar z}
  =\sum_{j=1}^n
  \left(-\partial_{z_j}+n\overline z_j\right)\bar\partial_{z_j}
  \quad\text{and}\quad
  \mathcal{N}_{z}
  =\sum_{j=1}^n
  \left(-\bar\partial_{z_j}+nz_j\right)\partial_{z_j}.
\end{equation}
Indeed, since $\frac{\partial_{z_j}V_n}{V_n}=\sum_{k\neq j}\frac{1}{z_j-z_k}$
on $\mathbb{C}^n_{\neq}$, we get, for all
$f\in\mathcal{C}_c^\infty(\mathbb{C}^n_{\neq},\mathbb{C})$,
\begin{align}
  \frac{1}{V_n}\mathcal{N}_{\bar z}(V_nf)
  &=-\sum_{j=1}^n\partial_{z_j}\bar\partial_{z_j}f
  +n\sum_{j=1}^n\overline z_j\bar\partial_{z_j}f
  -\sum_{j<k}
  \frac{(\bar\partial_{z_j}-\bar\partial_{z_k})f}{z_j-z_k},
  \label{eq:first-vandermonde-factor}\\
  \frac{1}{\overline V_n}\mathcal{N}_{z}(\overline V_nf)
  &=-\sum_{j=1}^n\partial_{z_j}\bar\partial_{z_j}f
  +n\sum_{j=1}^nz_j\partial_{z_j}f
  -\sum_{j<k}
  \frac{(\partial_{z_j}-\partial_{z_k})f}
  {\overline z_j-\overline z_k}.
  \label{eq:second-vandermonde-factor}
\end{align}

\noindent\emph{Step 2: the two Gaussian number-operator gaps.}
Let $\mathcal{H}_n$ and $\overline{\mathcal{H}_n}$ be the closed subspaces of
$L^2(\gamma_n)$ formed by entire holomorphic and entire antiholomorphic
functions, respectively. Let $H_{p,q}$ be the Hermite polynomials associated
to $\gamma_n$, see Section \ref{se:hermite}. We have
\begin{equation}
  \mathcal{N}_{\bar z}H_{p,q} = n|q|H_{p,q}
  \quad\text{and}\quad
  \mathcal{N}_{z}H_{p,q} = n|p|H_{p,q}.
\end{equation}
Moreover,
\begin{equation}
  \mathcal{H}_n = \overline{\operatorname{span}} \{H_{p,0}:p\in\mathbb{N}^n\}
  \quad\text{and}\quad
  \overline{\mathcal{H}_n} = \overline{\operatorname{span}} \{H_{0,q}:q\in\mathbb{N}^n\}.
\end{equation}
Consequently, in the sense of quadratic forms,
\begin{equation}\label{eq:two-number-operator-gaps}
  \mathcal{N}_{\bar z} \geq n\bigl(I-\operatorname{proj}_{\mathcal{H}_n}\bigr)
  \quad\text{and}\quad
  \mathcal{N}_{z} \geq n\bigl(I-\operatorname{proj}_{\overline{\mathcal{H}_n}}\bigr).
\end{equation}

\medskip

\noindent\emph{Step 3: pullback by $V_n$ and $\overline V_n$.}
On the symmetric subspace of $L^2(\mu_n)$, define the normalized Vandermonde
and anti-Vandermonde transforms (we have $U_+=U$ as in \eqref{eq:U})
\begin{equation}
  U_+f = \frac{1}{\sqrt{c_n'}}V_nf
  \quad\text{and}\quad
  U_-f = \frac{1}{\sqrt{c_n'}}\overline V_nf.
\end{equation}
Since $\mathrm{d}\mu_n=\frac{1}{c_n'}|V_n|^2\,\mathrm{d}\gamma_n$, both $U_+$
and $U_-$ are unitary maps from the symmetric subspace of $L^2(\mu_n)$ onto
the antisymmetric subspace of $L^2(\gamma_n)$. The orthogonal projections onto
$\mathcal{H}_n$ and $\overline{\mathcal{H}_n}$ preserve antisymmetry, as in
the proof of Lemma \ref{le:divisibility}. Lemma \ref{le:divisibility} and
complex conjugation therefore give
\begin{equation}\label{eq:two-pulled-projections}
  P = U_+^{-1}\operatorname{proj}_{\mathcal{H}_n}U_+
  \quad\text{and}\quad
  Q = U_-^{-1}\operatorname{proj}_{\overline{\mathcal{H}_n}}U_-,
\end{equation}
where $P$ and $Q$ are the projections introduced before Lemma
\ref{le:projection-geometry}. Hence Lemma \ref{le:projection-geometry} gives
\eqref{eq:two-projection-bound}.

\medskip

\noindent\emph{Step 4: combination of the two factors.}
Let
$f\in\mathcal{C}_c^\infty(\mathbb{C}^n_{\neq},\mathbb{C})$
be symmetric. Taking the scalar product of
\eqref{eq:generator-two-sided-factorization} with $f$, and using the
unitarity of $U_+$ and $U_-$, gives
\begin{equation}
  \mathcal{E}_{\mu_n}(f,f)
  =-\langle A_nf,f\rangle_{L^2(\mu_n)}
  =\frac{2}{n}\left(
    \langle U_+f,\mathcal{N}_{\bar z}U_+f\rangle_{L^2(\gamma_n)}
    +
    \langle U_-f,\mathcal{N}_{z}U_-f\rangle_{L^2(\gamma_n)}
  \right).
  \label{eq:form-from-two-factors}
\end{equation}
By \eqref{eq:two-number-operator-gaps},
\eqref{eq:two-pulled-projections}, and
\eqref{eq:two-projection-bound},
\begin{equation}
  \mathcal{E}_{\mu_n}(f,f)
  \geq
  2\left(
    2\|f\|_{L^2(\mu_n)}^2
    -\|Pf\|_{L^2(\mu_n)}^2
    -\|Qf\|_{L^2(\mu_n)}^2
  \right)
  \geq
  2\left(
    \|f\|_{L^2(\mu_n)}^2
    -\|P_0f\|_{L^2(\mu_n)}^2
  \right).
  \label{eq:form-gap-from-two-factors}
\end{equation}
%The symmetric part of
%$\mathcal{C}_c^\infty(\mathbb{C}^n_{\neq},\mathbb{C})$ is a form core for
%the restriction of $-L_n$ to the symmetric subspace: this follows by
%symmetrizing a sequence in the full form core. Hence
Finally \eqref{eq:form-gap-from-two-factors} extends to every symmetric
element $f$ of $H^1(\mu_n)$ by approximation.

\medskip

\noindent\emph{Step 5: passage to the spectral gap.}
The preceding form inequality says that, on the symmetric subspace, in the
sense of quadratic forms,
\begin{equation}\label{eq:symmetric-operator-lower-bound}
  -L_n\geq2(I-P_0).
\end{equation}
Constants belong to the kernel of $-L_n$, and
\eqref{eq:symmetric-operator-lower-bound} shows that there is no other kernel
vector in the symmetric subspace. Therefore the spectral measure of $-L_n$
associated with any symmetric $f\in\mathcal{D}(L_n)$ is supported on
$\{0\}\cup[2,\infty)$, which is the desired spectral gap estimate.
\end{proof}

\section{Alternative polynomial proof of Theorem \ref{th:main}}
\label{se:proof:th:main:polynomials}

We give an alternative proof of the Poincaré inequality in Theorem
\ref{th:main}. The idea is to expand the antisymmetric function $fV_n$ using
Hermite--Slater polynomials. This approach is a priori purely Gaussian and not
adapted for an extension to plurisubharmonic potentials.

\begin{proof}
  We first prove the inequality for a real-valued symmetric function
  $f\in\mathcal{C}_c^\infty(\mathbb{C}^n,\mathbb{R})$ centered with respect to
  $\mu_n$: $\int_{\mathbb{C}^n}f\,\mathrm{d}\mu_n=0$. Recall that
  $\mathrm{d}\mu_n=\frac{1}{c_n'}|V_n|^2\mathrm{d}\gamma_n$ and
  $\mathrm{d}\gamma_n(z)=(\frac{n}{\pi})^n\mathrm{e}^{-n|z|^2}\mathrm{d}z$. We
  shall diagonalize the antiholomorphic part of the Dirichlet form in the
  antisymmetric Hermite basis.

  Let $h_{a,b}$ be the Hermite polynomials associated to
  $\mathcal{N}_{\mathbb{C}}(0,1/n)$ the one-dimensional complex marginal of
  $\gamma_n$. Fix once and for all an ordering of $\mathbb{N}^2$. If
  $\Lambda=((a_1,b_1),\ldots,(a_n,b_n))$ consists of $n$ distinct pairs,
  written in increasing order, we consider the Hermite--Slater polynomial
  \begin{equation}\label{eq:slater-hermite}
    \Psi_\Lambda(z_1,\ldots,z_n)
    =
    \frac{1}{\sqrt{n!}}
    \det\bigl[h_{a_r,b_r}(z_s)\bigr]_{1\leq r,s\leq n}.
  \end{equation}
  The exterior-product construction of an orthonormal basis shows that the
  family ${( \Psi_\Lambda )}_\Lambda$ is an orthonormal basis of the
  antisymmetric subspace of $L^2(\gamma_n)$. The lowering relation
  \eqref{eq:one-variable-hermite-lowering} implies that
  \begin{equation}\label{eq:slater-number-operator}
    \Bigl(\sum_{j=1}^n(-\partial_{z_j}+n\overline z_j)\bar\partial_{z_j}\Bigr)\Psi_\Lambda
    = nq(\Lambda)\Psi_\Lambda
    \quad\text{where}\quad
    q(\Lambda) = \sum_{r=1}^n b_r.
  \end{equation}
  Indeed, the operator on the left is the sum of the identical one-particle
  ``number operators'' acting on the $n$ variables, and the determinant in
  \eqref{eq:slater-hermite} is therefore an eigenfunction with eigenvalue
  equal to the sum of the one-particle eigenvalues.

  Let $U$ be the normalized Vandermonde transform defined as in \eqref{eq:U}.
  Now, since $g=Uf$ is antisymmetric, it has an expansion
  \begin{equation}\label{eq:slater-expansion}
    g=\sum_\Lambda c_\Lambda\Psi_\Lambda
  \end{equation}
  in $L^2(\gamma_n)$. We have
  $|\nabla f|^2=4\sum_{j=1}^n|\bar\partial_{z_j}f|^2$ because $f$ is
  real-valued, and $c_n'^{-1/2}V_n\bar\partial_{z_j}f=\bar\partial_{z_j}g$
  since $V_n$ is holomorphic. Consequently, by the Parseval identity,
  integration by parts, and \eqref{eq:slater-number-operator}, we get
  \begin{equation}
    \frac{1}{n}\int_{\mathbb{C}^n}|\nabla f|^2\,\mathrm{d}\mu_n
    =\frac{4}{n}\sum_{j=1}^n\|\bar\partial_{z_j}g\|_{L^2(\gamma_n)}^2
    =4\sum_\Lambda q(\Lambda)|c_\Lambda|^2.
    \label{eq:slater-energy}
  \end{equation}
  We now examine the case $q(\Lambda)=0$. In that case all the integers
  $b_1,\ldots,b_n$ vanish. After writing $0\leq a_1<\cdots<a_n$, equations
  \eqref{eq:slater-hermite} and \eqref{eq:holomorphic-hermite} show that
  for a constant $C_\Lambda\neq0$,
  \begin{equation}\label{eq:holomorphic-slater-alternant}
    \Psi_\Lambda(z) = C_\Lambda \det\bigl[z_s^{a_r}\bigr]_{1\leq r,s\leq n}.
  \end{equation}
  Every alternating polynomial is divisible by the Vandermonde polynomial
  $V_n$. Hence the ratio $\Psi_\Lambda/V_n$ is a symmetric holomorphic
  polynomial. It is homogeneous of degree
  \begin{equation}\label{eq:alternant-degree}
    m(\Lambda)
    =
    \sum_{r=1}^n a_r-\frac{n(n-1)}{2}.
  \end{equation}
  Since the $a_r$ are distinct nonnegative integers,
  \[
    \sum_{r=1}^na_r \geq 0+1+\cdots+(n-1) = \frac{n(n-1)}{2}.
  \]
  Equality holds if and only if $(a_1,\ldots,a_n)=(0,1,\ldots,n-1)$, and in
  this exceptional case,
  \[
    \Psi_\Lambda = \pm \frac{1}{\sqrt{c_n'}}V_n = \pm U1,
  \]
  because both sides are normalized and $\det[z_s^{r-1}]_{1\leq r,s\leq n}=V_n(z)$.
  
  Let $g_0 = \sum_{\Lambda:q(\Lambda)=0} c_\Lambda\Psi_\Lambda$ be the
  orthogonal projection of $g$ onto $\mathcal{H}_n$. It is antisymmetric.
  Lemma \ref{le:divisibility} therefore gives a unique
  $h\in\mathcal{H}_n^{\mathrm{div}}$ such that
  \[
    g_0=Uh.
  \]
  Since $U$ is unitary and $g_0=\operatorname{proj}_{\mathcal{H}_n}g$, we have
  $h=Pf$. The alternant discussion above gives an explicit polynomial
  description of the homogeneous components of $h$. Since $f$ is real-valued
  and centered, Lemma \ref{le:projection-geometry} gives
  \begin{equation}\label{eq:holomorphic-projection-half-bound}
    2\|h\|_{L^2(\mu_n)}^2
    \leq
    \|f\|_{L^2(\mu_n)}^2.
  \end{equation}

  We can now conclude. Since $q(\Lambda)$ is a nonnegative integer, we have
  $q(\Lambda)\geq1$ whenever $q(\Lambda)\neq0$. It follows then from
  \eqref{eq:slater-energy} that
  \begin{align}
    \frac{1}{n}
    \int_{\mathbb{C}^n}|\nabla f|^2\,\mathrm{d}\mu_n
    &=
    4\sum_\Lambda q(\Lambda)|c_\Lambda|^2
    \notag\\
    &\geq
    4\sum_{\Lambda:q(\Lambda)\geq1}|c_\Lambda|^2
    \notag\\
    &=
    4\left(
      \|g\|_{L^2(\gamma_n)}^2
      -
      \|g_0\|_{L^2(\gamma_n)}^2
    \right)
    \notag\\
    &=
    4\left(
      \|f\|_{L^2(\mu_n)}^2
      -
      \|h\|_{L^2(\mu_n)}^2
    \right)
    \notag\\
    &\geq
    2\|f\|_{L^2(\mu_n)}^2.\nonumber
      %\label{eq:hermite-slater-poincare}
  \end{align}
  %Since $f$ is centered, this is the Poincaré inequality of Theorem
  %\ref{th:main}.
  By the symmetrization, centering, and density argument used in the first
  proof of Theorem \ref{th:main} given in Section \ref{se:proof:th:main}, the
  inequality extends to every symmetric $f\in H^1_{\mathbb{R}}(\mu_n)$.
\end{proof}

\section{Proof of Theorem \ref{th:poly}}
\label{se:proof:th:poly}

\begin{proof}
  Let $F$ be a polynomial in three variables and set
  \begin{equation}
    f(z)=F\bigl(S(z),\overline{S(z)},R(z)\bigr).
  \end{equation}
  We write $F_s,F_{\overline s},F_x$ for the derivatives of $F$ evaluated at
  $(S(z),\overline{S(z)},R(z))$. Let us compute now $A_nf$ using
  \eqref{eq:An:complex}. The elementary identities
  $\partial_{z_j}R=n\overline z_j-\overline S$ and
  $\bar\partial_{z_j}R=nz_j-S$ give
  \begin{equation}
    \sum_{j=1}^n\partial_{z_j}\bar\partial_{z_j}f
    =nF_{s\overline s}+nR F_{xx}+n(n-1)F_x
    \quad\text{and}\quad
    \sum_{j=1}^n
    \left(z_j\partial_{z_j}+\overline z_j\bar\partial_{z_j}\right)f
    =SF_s+\overline S F_{\overline s}+2R F_x.
  \end{equation}
  Finally, for every $j<k$,
  \begin{equation}
    (\partial_{z_j}-\partial_{z_k})f
    =n(\overline z_j-\overline z_k)F_x
    \quad\text{and}\quad
    (\bar\partial_{z_j}-\bar\partial_{z_k})f
    =n(z_j-z_k)F_x.
  \end{equation}
  Hence the singular terms in $A_nf$ cancel and
  \begin{equation}\label{eq:interaction-computation}
    \sum_{1\leq j<k\leq n}
    \left(
      \frac{(\partial_{z_j}-\partial_{z_k})f}
      {\overline z_j-\overline z_k}
      +
      \frac{(\bar\partial_{z_j}-\bar\partial_{z_k})f}
      {z_j-z_k}
    \right)
    =n^2(n-1)F_x.
  \end{equation}
  Injecting these formulas in \eqref{eq:An:complex} yields
  \begin{equation}\label{eq:restricted-generator}
    A_n f = \frac{\alpha_n}{n}\Bigl(
    4F_{s\overline s} -2\bigl(SF_s +\overline S F_{\overline s}\bigr) +4R F_{xx}+4(\kappa-R)F_x\Bigr).
  \end{equation}
  It follows that the restriction of $A_n$ to $\mathbb C[S,\overline S,R]$ is
  the sum of independent (complex) Hermite and (generalized) Laguerre
  operators. Now, as in the proof of Lemma \ref{le:dbar-gap}, see also
  \cite{Ito}, we have
  \begin{equation}\label{eq:hermite-equation}
    \left[
      4\partial_s\bar\partial_s
      -2\left(s\partial_s+\overline s\bar\partial_s\right)
    \right]\mathrm H_{a,b}
    =-2(a+b)\mathrm H_{a,b}.
  \end{equation}
  On the other hand, the Laguerre equation is
  \begin{equation}\label{eq:laguerre}
    \left[
      x\frac{\mathrm d^2}{\mathrm dx^2}
      +(\kappa-x)\frac{\mathrm d}{\mathrm dx}
    \right]L_m^{(\kappa-1)}(x)
    =-mL_m^{(\kappa-1)}(x),
  \end{equation}
  see \cite{Szego}. Applying \eqref{eq:restricted-generator} to the product in
  \eqref{eq:eigenpolynomial}, and then using \eqref{eq:hermite-equation} and
  \eqref{eq:laguerre}, gives \eqref{eq:eigenvalue}. Finally, the complex
  Hermite polynomials form a triangular basis of $\mathbb C[s,\overline s]$,
  and the generalized Laguerre polynomials form a triangular basis of
  $\mathbb C[x]$. Their products therefore form a basis of
  $\mathbb C[S,\overline S,R]$. The invariance of this algebra follows from
  \eqref{eq:eigenvalue}.

  In other words, we exhibit an algebra of symmetric polynomials preserved by
  $A_n$. On this algebra, $A_n$ reduces to the sum of a complex
  Ornstein--Uhlenbeck operator and a Laguerre operator. Consequently, products
  of complex Hermite and Laguerre polynomials form an explicit polynomial
  eigenbasis. But instead of using the differential operator $A_n$ and the
  algebra of differential calculus, we could get the result probabilistically
  as a corollary of the dynamical factorization of Theorem
  \ref{th:split:dynamical}.

  Finally, the orthogonality in $L^2(\mu_n)$ comes from the splitting of
  Theorem \ref{th:split} by factorization of the scalar products and the
  Hermite and Laguerre orthogonality relations.
\end{proof}

\section{Proof of Theorems \ref{th:deficit1} and \ref{th:deficit2}}
\label{se:proof:th:deficit}

\begin{proof}
  Take a symmetric $f\in\mathcal{C}^\infty_c(\mathbb{C}^n_{\neq},\mathbb{R})$.
  Recall that here $\alpha_n=n$, so that
  \begin{equation}\label{eq:EfA}
    \mathcal{E}_{\mu_n}(f,f)=-\langle
    A_nf,f\rangle_{L^2(\mu_n)}=\frac{1}{n}\int_{\mathbb{C}^n}|\nabla
    f|^2\,\mathrm{d}\mu_n,\quad\text{and}\quad
    \mathcal{E}_{\mu_n}(\widetilde f,\widetilde f)=\mathcal{E}_{\mu_n}(f,f).
  \end{equation}
  Let $U$ be the normalized Vandermonde transform as in \eqref{eq:U}. Recall
  that $g=U\widetilde f$ satisfies
  $\|g\|_{L^2(\gamma_n)}=\|\widetilde f\|_{L^2(\mu_n)}$, and is antisymmetric
  since $\widetilde f$ is symmetric. Now we use the expansion with respect to
  the Hermite polynomials $H_{p,q}$ associated to $\gamma_n$, see Section
  \ref{se:hermite}. For every $m\in\mathbb{N}$, let $g_m$ be the orthogonal
  projection of $g$ onto the closed span of the polynomials $H_{p,q}$ such
  that $|q|=m$. Thus
  \begin{equation}\label{eq:gamma-two-hermite-decomposition}
    g=\sum_{m=0}^{\infty}g_m
    \quad\text{in }L^2(\gamma_n).
  \end{equation}
  The subspace corresponding to a fixed value of $m$ is invariant under
  coordinate permutations. Its orthogonal projection therefore commutes with
  the permutation action, and each $g_m$ is antisymmetric. The lowering
  relation \eqref{eq:hermite:lowering} and the Parseval identity give
  \begin{equation}\label{eq:gamma-two-hermite-energy}
    \mathcal{E}_{\mu_n}(\widetilde f,\widetilde f)
    =4\sum_{m=1}^{\infty}m\|g_m\|_{L^2(\gamma_n)}^2.
  \end{equation}
  Indeed, since $\widetilde{f}$ is real valued and since $V_n$ is holomorphic,
  \begin{equation}
    \mathcal{E}_{\mu_n}(\widetilde f,\widetilde f)
    =\frac{4}{n}\sum_{j=1}^n\|\bar\partial_{z_j}\widetilde f\|_{L^2(\mu_n)}^2
    =\frac{4}{n}\sum_{j=1}^n\|\bar\partial_{z_j}g\|_{L^2(\gamma_n)}^2
    =\frac{4}{n}n\sum_{m\geq1}m\|g_m\|_{L^2(\gamma_n)}^2.
  \end{equation}
  The component $g_0=\operatorname{proj}_{\mathcal{H}_n}g$ belongs to
  $\mathcal{H}_n$ and is antisymmetric. By the divisibility Lemma
  \ref{le:divisibility}, there exists a unique
  $h\in\mathcal{H}_n^{\mathrm{div}}$ such that $g_0=Uh$, and since $U$ is
  unitary, $h=P\widetilde f$. Define $r=\widetilde f-h-\overline h$. The
  function $\widetilde f$ is real-valued and centered. The projection geometry
  Lemma \ref{le:projection-geometry} therefore shows that $h$, $\overline h$,
  and $r$ are pairwise orthogonal and gives
  \eqref{eq:projection-geometry-decomposition}. On the other hand, 
  \begin{equation}\label{eq:gamma-two-zero-level-norm}
    \|g_0\|_{L^2(\gamma_n)}=\|h\|_{L^2(\mu_n)}.
  \end{equation}
  Now, from \eqref{eq:gamma-two-hermite-energy} and
  \eqref{eq:gamma-two-zero-level-norm}, we get
  \begin{align*}
    \mathcal{E}_{\mu_n}(\widetilde f,\widetilde f)
    &=4\sum_{m=1}^{\infty}\|g_m\|_{L^2(\gamma_n)}^2
      +4\sum_{m=2}^{\infty}(m-1)\|g_m\|_{L^2(\gamma_n)}^2\\
    &=4\left(
      \|\widetilde f\|_{L^2(\mu_n)}^2
      -\|h\|_{L^2(\mu_n)}^2
      \right)
      +4\sum_{m=2}^{\infty}(m-1)\|g_m\|_{L^2(\gamma_n)}^2.
  \end{align*}
  Subtracting $2\|\widetilde f\|_{L^2(\mu_n)}^2$ and using
  \eqref{eq:projection-geometry-decomposition} yields the desired sum-of-squares
  formula for the Poincaré deficit \eqref{eq:poincare-deficit-sum-of-squares}.
  Finally, since
  $\langle A_n\widetilde f,\widetilde
  f\rangle_{L^2(\mu_n)}=-\mathcal{E}_{\mu_n}(\widetilde f,\widetilde f)$, we
  get
  \begin{equation}\label{eq:gamma-two-algebraic-identity}
    \|A_nf\|_{L^2(\mu_n)}^2
    -2\mathcal{E}_{\mu_n}(f,f)
    =
    \|(A_n+2)\widetilde f\|_{L^2(\mu_n)}^2
    +2\Bigl(
      \mathcal{E}_{\mu_n}(\widetilde f,\widetilde f)
      -2\|\widetilde f\|_{L^2(\mu_n)}^2
    \Bigr).
  \end{equation}
  Combining \eqref{eq:poincare-deficit-sum-of-squares} and
  \eqref{eq:gamma-two-algebraic-identity}, we obtain the sum-of-squares
  formula for the $\Gamma_2$ deficit
  \eqref{eq:integrated-gamma-two-sum-of-squares}.
\end{proof}

\begin{proof}
  Recall that we take $\alpha_n=n$, giving \eqref{eq:EfA}. We first introduce
  the Gaussian operator that will replace the Hermite expansion. On
  $L^2(\gamma_n)$, consider the nonnegative functional quadratic form
  \begin{equation}\label{eq:gaussian-dbar-form}
    Q_n(f,f) = \sum_{j=1}^n \|\bar\partial_{z_j}f\|_{L^2(\gamma_n)}^2.
  \end{equation}
  %with domain consisting of the functions $w\in L^2(\gamma_n)$ whose weak
  %derivatives $\bar\partial_{z_j}w$ belong to $L^2(\gamma_n)$ for every
  %$j$.
  The associated nonnegative self-adjoint operator is
  $\mathcal{N}_n=\sum_{j=1}^n\mathcal{N}_{\bar{z}_j}=\sum_{j=1}^n(-\partial_{z_j}+n\overline
  z_j)\bar\partial_{z_j}$. Now we record the Gaussian Bochner--Kodaira
  identity. By Gaussian integration by parts, we get
  \[
    \mathcal{N}_n=\sum_{j=1}^nD_j^*D_j
    \qquad\text{and}\qquad
    [D_j,D_k^*]=n\delta_{jk}
    \quad\text{where}\quad
    D_j=\bar\partial_{z_j}
    \quad\text{and}\quad
    D_j^*=-\partial_{z_j}+n\overline z_j.
  \]
  Hence, for
  $w\in\mathcal{C}_c^\infty(\mathbb{C}^n,\mathbb{C})$,
  \begin{align}
    \|\mathcal{N}_nw\|_{L^2(\gamma_n)}^2
    &=
    \sum_{j,k=1}^n
    \langle D_j^*D_jw,D_k^*D_kw\rangle_{L^2(\gamma_n)}
    \notag\\
    &=
    \sum_{j,k=1}^n
    \langle D_jw,D_jD_k^*D_kw\rangle_{L^2(\gamma_n)}
    \notag\\
    &=
    \sum_{j,k=1}^n
    \langle D_jw,D_k^*D_jD_kw\rangle_{L^2(\gamma_n)}
    +
    n\sum_{j=1}^n
    \|D_jw\|_{L^2(\gamma_n)}^2
    \notag\\
    &=
    \sum_{j,k=1}^n
    \|D_jD_kw\|_{L^2(\gamma_n)}^2
    +
    n\sum_{j=1}^n
    \|D_jw\|_{L^2(\gamma_n)}^2.
  \end{align}
  Therefore
  \begin{equation}\label{eq:gaussian-bochner-kodaira}
    \|\mathcal{N}_nw\|_{L^2(\gamma_n)}^2
    -
    n\langle w,\mathcal{N}_nw\rangle_{L^2(\gamma_n)}
    =
    \sum_{j,k=1}^n
    \|\bar\partial_{z_j}\bar\partial_{z_k}w\|_{L^2(\gamma_n)}^2.
  \end{equation}
  The space $\mathcal{C}_c^\infty(\mathbb{C}^n,\mathbb{C})$ is an operator
  core for $\mathcal{N}_n$, as follows by radial cutoff and mollification.
  Consequently, the identity \eqref{eq:gaussian-bochner-kodaira} extends by
  closure to every $w\in\mathcal{D}(\mathcal{N}_n)$. Next, the kernel of
  $\mathcal{N}_n$ is the set $\mathcal{H}_n$ of entire holomorphic functions
  in $L^2(\gamma_n)$. Indeed,
  \[
    \langle w,\mathcal{N}_nw\rangle_{L^2(\gamma_n)}
    =
    \sum_{j=1}^n
    \|\bar\partial_{z_j}w\|_{L^2(\gamma_n)}^2,
  \]
  so a function belongs to the kernel if and only if all its $\bar\partial$
  derivatives vanish weakly. Such a function has an entire holomorphic
  representative. The identity \eqref{eq:gaussian-bochner-kodaira} implies
  $\mathcal{N}_n^2\geq n\mathcal{N}_n$ in the sense of self-adjoint operators.
  Since $\mathcal{N}_n$ is nonnegative, the spectral theorem gives
  $\operatorname{spec}(\mathcal{N}_n) \subset \{0\}\cup[n,\infty)$. In
  particular, $\mathcal{N}_n\geq n$ on $\mathcal{H}_n^\perp$. Therefore
  $\mathcal{N}_n$ is invertible on $\mathcal{H}_n^\perp$ and bounded below by
  $nI$.

  Let $\Pi_n=\operatorname{proj}_{\mathcal{H}_n}$ be the orthogonal projection
  of $L^2(\gamma_n)$ onto $\mathcal{H}_n$. We now transfer to $\mu_n$. Let $U$
  be the normalized Vandermonde transform as in \eqref{eq:U}. Recall that
  $g=U\widetilde f$ satisfies
  $\|g\|_{L^2(\gamma_n)}=\|\widetilde f\|_{L^2(\mu_n)}$, and is antisymmetric
  since $\widetilde f$ is symmetric. Since $V_n$ is holomorphic, we
  get $\bar\partial_{z_j}g=U\bar\partial_{z_j}\widetilde f$. It follows first
  for smooth functions, and then by density in the form domain, that
  \begin{equation}\label{eq:gamma-two-energy-number-operator}
    \mathcal{E}_{\mu_n}(\widetilde f,\widetilde f) = \frac{4}{n}Q_n(g,g).
  \end{equation}
  Define $g_0=\Pi_ng$ and $u=(I-\Pi_n)g$. The projection $\Pi_n$ commutes with
  coordinate permutations, because both $\gamma_n$ and $\mathcal{N}_n$ are
  invariant under such permutations. Hence $g_0$ remains antisymmetric.

  By the divisibility Lemma \ref{le:divisibility}, there exists a unique
  $h\in\mathcal{H}_n^{\mathrm{div}}$ such that $g_0=Uh$. Since $U$ is unitary
  and $g_0=\Pi_ng$, we have $h=P\widetilde f$. Set
  $r=\widetilde f-h-\overline h$. The projection geometry Lemma
  \ref{le:projection-geometry} gives
  \eqref{eq:projection-geometry-decomposition}. Moreover,
  we have $\|g_0\|_{L^2(\gamma_n)}=\|h\|_{L^2(\mu_n)}$, and hence
  \begin{equation}\label{eq:gamma-two-u-norm}
    \|u\|_{L^2(\gamma_n)}^2
    = \|\widetilde f\|_{L^2(\mu_n)}^2 - \|h\|_{L^2(\mu_n)}^2
    = \|h\|_{L^2(\mu_n)}^2 + \|r\|_{L^2(\mu_n)}^2.
  \end{equation}
  Since $g_0$ belongs to the kernel of $\mathcal{N}_n$, we have
  \begin{equation}
    Q_n(g,g)=Q_n(u,u).
  \end{equation}
  Moreover, $u\in\mathcal{H}_n^\perp$, so the spectral theorem gives
  \begin{equation}\label{eq:gamma-two-number-square}
    Q_n(u,u) = n\|u\|_{L^2(\gamma_n)}^2 +
    \|(\mathcal{N}_n-nI)^{1/2}u\|_{L^2(\gamma_n)}^2.
  \end{equation}
  Here $(\mathcal{N}_n-nI)^{1/2}$ is defined by spectral calculus on
  $\mathcal{H}_n^\perp$.

  Combining \eqref{eq:gamma-two-energy-number-operator},
  \eqref{eq:projection-geometry-decomposition}, \eqref{eq:gamma-two-u-norm}, and
  \eqref{eq:gamma-two-number-square}, we obtain
  \begin{equation}\label{eq:poincare-deficit-with-number-operator}
    \mathcal{E}_{\mu_n}(\widetilde f,\widetilde f)
    -2\|\widetilde f\|_{L^2(\mu_n)}^2
    =2\|r\|_{L^2(\mu_n)}^2
    +\frac{4}{n}\|(\mathcal{N}_n-nI)^{1/2}u\|_{L^2(\gamma_n)}^2.
  \end{equation}

  Formula \eqref{eq:poincare-deficit-with-number-operator} is already an exact
  sum of two squared norms. We next rewrite its second square as a sum of
  squared derivatives. Define $\mathcal{N}_n^{-1/2}$ by spectral calculus,
  with value zero on $\mathcal{H}_n$, and set
  \begin{equation}\label{eq:gamma-two-pseudoinverse}
    v=\mathcal{N}_n^{-1/2}u.
  \end{equation}
  Since $u\in\mathcal{H}_n^\perp$ and $u\in\mathcal{D}(\mathcal{N}_n^{1/2})$,
  the spectral gap of $\mathcal{N}_n$ implies that
  $v\in\mathcal{D}(\mathcal{N}_n)$ and $u=\mathcal{N}_n^{1/2}v$. Applying the
  closed version of the Bochner--Kodaira identity
  \eqref{eq:gaussian-bochner-kodaira} to $v$ gives
  \begin{equation} \label{eq:gamma-two-second-derivative-square}
    \|(\mathcal{N}_n-nI)^{1/2}u\|_{L^2(\gamma_n)}^2 %
    = \|\mathcal{N}_nv\|_{L^2(\gamma_n)}^2 %
    - n\langle v,\mathcal{N}_nv\rangle_{L^2(\gamma_n)} %
    = \sum_{j,k=1}^n\|\bar\partial_{z_j}\bar\partial_{z_k}v\|_{L^2(\gamma_n)}^2.
  \end{equation}
  Used in \eqref{eq:poincare-deficit-with-number-operator}, it gives the
  sum-of-squares formula \eqref{eq:poincare-deficit-differential-squares} for
  the Poincaré inequality. The derivation of the sum-of-squares formula
  \eqref{eq:gamma-two-deficit-differential-squares} for $\Gamma_2$ can be done
  as in the proof of Theorem \ref{th:deficit1}.
\end{proof}

\section{Proof of Theorem \ref{th:rlsi}}
\label{se:proof:th:rlsi}

\begin{proof}
  Following \cite{Kostlan}, for all measurable, integrable, and symmetric
  $G:(0,\infty)^n\to\mathbb{R}$,
  \begin{equation}\label{eq:kostlan-identity-radial-lsi}
    \int_{\mathbb{C}^n}G(|z_1|,\ldots,|z_n|)\,\mathrm{d}\mu_n(z)
    =
    \int_{(0,\infty)^n}G(r_1,\ldots,r_n)\prod_{k=1}^n\mathrm{d}\nu_{n,k}(r_k)
  \end{equation}
  where
  $\mathrm{d}\nu_{n,k}(r)=\frac{2n^k}{\Gamma(k)}r^{2k-1}\mathrm{e}^{-nr^2}\mathrm{d}r$,
  in other words $R_k\sim\nu_{n,k}$ if and only if
  $nR_k^2\sim\mathrm{Gamma}(k,1)$. Equivalently, the unordered multiset
  $\{|Z_1|,\ldots,|Z_n|\}$ under $\mu_n$ has the same distribution as the
  unordered multiset of independent random variables $R_1,\ldots,R_n$ with
  respective distributions $\nu_{n,1},\ldots,\nu_{n,n}$. Beyond the complex
  Ginibre log-gas, such a factorization property remains valid for
  determinantal point processes with radial kernel, see for instance
  \cite[Th.~26]{HoughKrishnapurPeresViragIndependence}.

  Let us now prove a log-Sobolev inequality for
  $\nu_{n,1}\otimes\cdots\otimes\nu_{n,n}$. For all $1\leq k\leq n$, we take
  $X_k\sim\mathcal{N}_{\mathbb{C}^k}(0,1/n)=(\frac{n}{\pi})^k\mathrm{e}^{-n|x|^2}\mathrm{d}x_1\cdots\mathrm{d}x_k$,
  a random vector of $\mathbb{C}^k\equiv\mathbb{R}^{2k}$ following the
  $k$-dimensional complex marginal of $\gamma_n$. We take $X_1,\ldots,X_n$
  independent. We have $|X_k|\sim\nu_{n,k}$. Define
  \[
    H(x_1,\ldots,x_n)
    =
    F(|x_1|,\ldots,|x_n|),
    \qquad
    x_k\in\mathbb{R}^{2k}.
  \]
  The joint distribution $\eta_n$ of $(X_1,\ldots,X_n)$ is the Gaussian
  distribution $\mathcal{N}_{\mathbb{R}^d}(0,(2n)^{-1}I_d)$, where $d=n(n+1)$.
  Now the optimal Gaussian logarithmic Sobolev inequality gives
  \begin{equation}\label{eq:gaussian-lsi-for-kostlan}
    \operatorname{Ent}_{\eta_n}(H^2)
    \leq
    \frac{1}{n}\int_{\mathbb{R}^d}|\nabla H|^2\mathrm{d}\eta_n,
  \end{equation}
  see \cite{Gross} or \cite[Ch.~5]{BakryGentilLedoux}. At every point for
  which $x_k\neq0$ for all $k$, the chain rule gives
  $\nabla_{x_k}H(x_1,\ldots,x_n)=\partial_kF(|x_1|,\ldots,|x_n|)\frac{x_k}{|x_k|}$.
  Consequently, we have, almost everywhere,
  \begin{equation}\label{eq:gaussian-block-gradient}
    |\nabla H(x_1,\ldots,x_n)|^2 = \sum_{k=1}^n|\partial_kF(|x_1|,\ldots,|x_n|)|^2.
  \end{equation}
  Using the image measure by the map
  $(x_1,\ldots,x_n)\mapsto(|x_1|,\ldots,|x_n|)$ in
  \eqref{eq:gaussian-lsi-for-kostlan} yields
  \begin{equation}\label{eq:kostlan-product-lsi}
    \operatorname{Ent}_{\nu_{n,1}\otimes\cdots\otimes\nu_{n,n}}(F^2)
    \leq
    \frac{1}{n}
    \int_{(0,\infty)^n}
    \sum_{k=1}^n|\partial_kF(r)|^2
    \prod_{k=1}^n\mathrm{d}\nu_{n,k}(r_k).
  \end{equation}

  We now transfer this inequality to $\mu_n$. Since $F$ is symmetric, the
  functions $F^2$ and $F^2\log F^2$ are also symmetric. The Kostlan identity
  \eqref{eq:kostlan-identity-radial-lsi} gives then
  \begin{equation}\label{eq:kostlan-entropy-transfer}
    \operatorname{Ent}_{\mu_n}(f^2)
    =
    \operatorname{Ent}_{\nu_{n,1}\otimes\cdots\otimes\nu_{n,n}}(F^2).
  \end{equation}
  If $z_j=x_j+\mathrm{i}y_j\neq0$, by Euclidean chain rule,
  $\nabla_{(x_j,y_j)}f(z)=\partial_jF(|z_1|,\ldots,|z_n|)\frac{(x_j,y_j)}{|z_j|}$,
  hence
  \begin{equation}\label{eq:radial-gradient-identity}
    |\nabla f(z)|^2
    =
    \sum_{j=1}^n
    \left|
      \partial_jF(|z_1|,\ldots,|z_n|)
    \right|^2
  \end{equation}
  for almost every $z\in\mathbb{C}^n$. Differentiating the symmetry identity
  of $F$ shows that $r\longmapsto\sum_{j=1}^n|\partial_jF(r)|^2$ is itself
  symmetric. Now a second application of the Kostlan identity
  \eqref{eq:kostlan-identity-radial-lsi} therefore gives
  \begin{align}
    \int_{\mathbb{C}^n}|\nabla f|^2\,\mathrm{d}\mu_n
    &=
    \int_{(0,\infty)^n}
    \sum_{j=1}^n|\partial_jF(r)|^2
    \prod_{k=1}^n\mathrm{d}\nu_{n,k}(r_k).
    \label{eq:kostlan-energy-transfer}
  \end{align}
  Combining \eqref{eq:kostlan-product-lsi},
  \eqref{eq:kostlan-entropy-transfer}, and
  \eqref{eq:kostlan-energy-transfer} proves
  the desired logarithmic Sobolev inequality \eqref{eq:rlsi}.

  Finally, let $(f_m)$ be a sequence of radial symmetric functions in
  $\mathcal{C}^\infty_c(\mathbb{C}^n,\mathbb{R})$ converging to $f$ in
  $H^1_{\mathbb{R}}(\mu_n)$. Then $f_m^2\to f^2$ in $L^1(\mu_n)$. The entropy
  functional is lower semicontinuous under this convergence, while the
  Dirichlet energies converge. Therefore
  \[
    \operatorname{Ent}_{\mu_n}(f^2)
    \leq
    \liminf_{m\to\infty}
    \operatorname{Ent}_{\mu_n}(f_m^2)
    \leq
    \lim_{m\to\infty}\mathcal{E}_{\mu_n}(f_m,f_m)
    =
    \mathcal{E}_{\mu_n}(f,f).
  \]
  This proves the extension to the radial symmetric Sobolev closure.
\end{proof}

\appendix

\section{Approximation lemmas}

\begin{lemma}[Domain of the closure of the gradient]\label{le:gradient:closure:domain}
  For all $n\geq1$, we can identify $H^1_{\mathbb{R}}(\mu_n)$ with the domain
  of the closure of
  $\nabla:\mathcal{C}^\infty_c(\mathbb{C}^n,\mathbb{R})\to
  L^2(\mu_n,\mathbb{R}^{2n})$ equipped with the graph norm
  \eqref{eq:graphnorm}.
\end{lemma}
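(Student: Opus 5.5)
The plan is to prove two things. First, the gradient operator defined on $\mathcal{C}^\infty_c(\mathbb{C}^n,\mathbb{R})$ is closable in $L^2(\mu_n)$, so the abstract completion defining $H^1_{\mathbb{R}}(\mu_n)$ embeds injectively into $L^2(\mu_n)$ as the domain $\mathcal{D}(\overline{\nabla})$. Second, this domain coincides with the concrete weighted Sobolev space
\[
  W=\bigl\{f\in L^2(\mu_n,\mathbb{R}):f\in H^1_{\mathrm{loc}}(\mathbb{C}^n_{\neq}),\ |\nabla f|\in L^2(\mu_n)\bigr\},
\]
with $\overline{\nabla}f$ equal to the weak gradient. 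Since $\mathbb{C}^n_{=}$ is Lebesgue-null, the weak gradient on $\mathbb{C}^n_{\neq}$ defines an element of $L^2(\mu_n,\mathbb{R}^{2n})$.

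\emph{Closability and the inclusion $\mathcal{D}(\overline{\nabla})\subset W$.} Let $u_m\in\mathcal{C}^\infty_c$ with $u_m\to f$ in $L^2(\mu_n)$ and $\nabla u_m\to G$ in $L^2(\mu_n)$. On every compact $K\subset\mathbb{C}^n_{\neq}$ the density of $\mu_n$ is bounded above and below by positive constants. Hence $u_m\to f$ and $\nabla u_m\to G$ in $L^2(K,\mathrm{d}z)$. Testing against $\phi\in\mathcal{C}^\infty_c(\mathbb{C}^n_{\neq})$ then gives $\nabla f=G$ in the distributional sense on $\mathbb{C}^n_{\neq}$, and therefore almost everywhere. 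Taking $f=0$ yields $G=0$, which is closability. Taking $f$ general yields $\mathcal{D}(\overline{\nabla})\subset W$ with $\overline{\nabla}f=\nabla f$.

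\emph{Density: $W\subset\mathcal{D}(\overline{\nabla})$.} Fix $f\in W$ and approximate it in four steps, each converging in graph norm.
\begin{enumerate}
\item \emph{Truncation.} Set $f_M=\max(-M,\min(f,M))$. Then $f_M\in W$ with $\nabla f_M=\mathbf{1}_{\{|f|<M\}}\nabla f$, and $f_M\to f$ in graph norm by dominated convergence.
\item \emph{Cutoff at infinity.} Multiply by $\chi_R(z)=\chi(|z|/R)$. The error term is $\int f_M^2|\nabla\chi_R|^2\,\mathrm{d}\mu_n\leq CR^{-2}\|f_M\|_2^2$, which tends to $0$.
\item \emph{Cutoff near the diagonal.} Let $\eta\in\mathcal{C}^\infty(\mathbb{R}_+,[0,1])$ vanish on $[0,1]$ and equal $1$ on $[2,\infty)$, and set $\psi_\varepsilon(z)=\prod_{j<k}\eta(|z_j-z_k|/\varepsilon)$. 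Then $|\nabla\psi_\varepsilon|^2\leq C\varepsilon^{-2}\sum_{j<k}\mathbf{1}_{\{\varepsilon<|z_j-z_k|<2\varepsilon\}}$. On the support of $\chi_R$, the weight satisfies $|V_n|^2\leq C_R|z_j-z_k|^2\leq 4C_R\varepsilon^2$ on the $(j,k)$ shell. The Lebesgue volume of that shell within $\{|z|\leq 2R\}$ is $O(\varepsilon^2)$, since the constraint has real codimension two. Therefore
\[
  \int f_M^2\chi_R^2|\nabla\psi_\varepsilon|^2\,\mathrm{d}\mu_n\leq M^2C_R'\,\varepsilon^{-2}\,\varepsilon^2\,\varepsilon^2\to0 .
\]
The other terms of $\nabla(f_M\chi_R\psi_\varepsilon)$ converge by dominated convergence, since $\psi_\varepsilon\to1$ almost everywhere.
\item \emph{Mollification.} The function $f_M\chi_R\psi_\varepsilon$ lies in $H^1$ and has compact support in $\mathbb{C}^n_{\neq}$, where $\mu_n$ is comparable to Lebesgue measure. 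Standard mollification therefore produces elements of $\mathcal{C}^\infty_c(\mathbb{C}^n_{\neq},\mathbb{R})$ converging in graph norm.
\end{enumerate}
A diagonal argument over $M$, $R$, $\varepsilon$ and the mollification parameter concludes the density.

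The main obstacle is the third step: the cutoff near $\mathbb{C}^n_{=}$, including multiple collisions where several $\eta$ factors vary simultaneously. I would handle it through the product rule bound $|\nabla\psi_\varepsilon|^2\leq\binom n2\sum_{j<k}|\nabla\eta(|z_j-z_k|/\varepsilon)|^2$. This reduces everything to single-pair shells, on which the vanishing factor $|z_j-z_k|^2$ of $|V_n|^2$ gives extra decay. The truncation in the first step is exactly what makes the $L^\infty$ bound $M^2$ available there. As a byproduct, $\mathcal{C}^\infty_c(\mathbb{C}^n_{\neq},\mathbb{R})$ is also a core, consistent with Lemma \ref{le:closure}.
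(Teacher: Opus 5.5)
Your closability argument is the same as the paper's proof, and it is all the lemma requires: $\mu_n$ is comparable to Lebesgue measure on compacts of $\mathbb{C}^n_{\neq}$, distributional integration by parts then gives $G=0$ there, and $\mu_n(\mathbb{C}^n_{=})=0$ finishes it. Your further identification of the domain with the concrete weighted space $W$ (truncation, cutoffs at infinity and near the diagonal, then mollification) is correct but goes beyond what the statement asks. Its near-diagonal cutoff estimate is the same one the paper uses in Lemma \ref{le:closure}.
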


\begin{proof}
  If ${(f_m)}$ is a sequence in
  $\mathcal{C}^\infty_c(\mathbb{C}^n,\mathbb{R})$ with $f_m\to 0$ in
  $L^2(\mu_n)$ and $\nabla f_m\to G$ in $L^2(\mu_n,\mathbb{R}^{2n})$, then on
  every relatively compact open subset $O$ of $\mathbb{C}^n_{\neq}$, the
  density of $\mu_n$ is $\mathcal{C}^\infty$ and is bounded above and below by
  positive constants, hence $f_m\to0$ and $\nabla f_m\to G$ in the
  corresponding $L^2$ spaces. By distributional integration by parts, we get
  $G=0$ on $O$. Since $O$ is arbitrary, we get that $G=0$ on
  $\mathbb{C}^n_{\neq}$, and we know that $\mu_n(\mathbb{C}^n_{\neq})=1$. This
  proves that $\nabla$ is closable.
\end{proof}

\begin{lemma}[Equivalence]\label{le:closure}
  For all $n\geq1$, the graph norms \eqref{eq:graphnorm} and
  \eqref{eq:graph:norm:En} are equivalent, and the spaces
  $\mathcal{C}^\infty_c(\mathbb{C}^n,\mathbb{C})$ and
  $\mathcal{C}^\infty_c(\mathbb{C}^n_{\neq},\mathbb{C})$ have the same closure
  for the graph norm, denoted $H^1(\mu_n)$.
\end{lemma}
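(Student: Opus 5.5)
The first claim, equivalence of the graph norms, is immediate: the two norms differ only by the constant factor $\alpha_n/\beta_n$ in front of $\int|\nabla f|^2\,\mathrm{d}\mu_n$. So the real content is that the closure of $\mathcal{C}^\infty_c(\mathbb{C}^n_{\neq},\mathbb{C})$ contains $\mathcal{C}^\infty_c(\mathbb{C}^n,\mathbb{C})$; the reverse inclusion is trivial. Given $f\in\mathcal{C}^\infty_c(\mathbb{C}^n,\mathbb{C})$, I will approximate it by $f\chi_\varepsilon$, where $\chi_\varepsilon$ is a cutoff vanishing near the collision set $\mathbb{C}^n_{=}$. This is a zero-capacity statement for $\mathbb{C}^n_{=}$ relative to the weight $|V_n|^2\mathrm{e}^{-n|z|^2}$.

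For the cutoff, fix a smooth $\eta:[0,\infty)\to[0,1]$ with $\eta=0$ on $[0,1]$ and $\eta=1$ on $[2,\infty)$, and set
\[
\chi_\varepsilon(z)=\prod_{j<k}\eta\bigl(|z_j-z_k|/\varepsilon\bigr).
\]
Then $f\chi_\varepsilon\in\mathcal{C}^\infty_c(\mathbb{C}^n_{\neq},\mathbb{C})$, and $f\chi_\varepsilon\to f$ in $L^2(\mu_n)$ by dominated convergence, since $\mu_n(\mathbb{C}^n_{=})=0$. For the gradient, $\chi_\varepsilon\nabla f\to\nabla f$ in $L^2(\mu_n)$ by the same argument. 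It remains to show $\int|f|^2|\nabla\chi_\varepsilon|^2\,\mathrm{d}\mu_n\to0$. We have $|\nabla\chi_\varepsilon|\le C\varepsilon^{-1}\sum_{j<k}\mathbf{1}_{\{\varepsilon\le|z_j-z_k|\le2\varepsilon\}}$, so it suffices to treat one pair $(j,k)$ on the compact support $K$ of $f$.

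On the set $\{|z_j-z_k|\le2\varepsilon\}\cap K$, the density of $\mu_n$ is bounded by $C|z_j-z_k|^2\le C\varepsilon^2$, because the other Vandermonde factors are bounded on $K$. In the relative variable $u=z_j-z_k\in\mathbb{C}$, the set has Lebesgue measure $O(\varepsilon^2)$ times a bounded measure in the remaining variables. Hence
\[
\int_K\varepsilon^{-2}\mathbf{1}_{\{|z_j-z_k|\le2\varepsilon\}}\,\mathrm{d}\mu_n\le C\varepsilon^{-2}\cdot\varepsilon^2\cdot\varepsilon^2=C\varepsilon^2\to0.
\]
Even without the Vandermonde vanishing, the plain bound $C\varepsilon^{-2}\cdot\varepsilon^2$ would only stay bounded and not tend to zero, since in real codimension two capacity is borderline. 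That is why the $|u|^2$ factor from $V_n$ is essential here. It is also the only delicate step. If one wanted to avoid it, the fallback would be the logarithmic cutoff $\eta(\log\log(1/|u|))$-type construction, which works for codimension two even without the weight.

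Combining these estimates, $f\chi_\varepsilon\to f$ in the graph norm, which proves that the two closures coincide. The complex-valued case follows by treating the real and imaginary parts separately, or directly, since all the estimates are on $|f|^2$.
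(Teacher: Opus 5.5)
Your proposal is correct and follows essentially the same route as the paper: you use a product of pairwise cutoffs at scale $\varepsilon$ around the collision hyperplanes, and the factor $|z_j-z_k|^2$ in the density makes the gradient contribution $O(\varepsilon^2)$ on the compact support of $f$. The only cosmetic difference is that you get the $L^2(\mu_n)$ convergence from dominated convergence, whereas the paper gives an explicit $O(\varepsilon^4)$ bound.
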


\begin{proof}
  The equivalence of the norms comes from the fact that $\alpha_n/\beta_n>0$.
  Next, near the hyperplane $\{z\in\mathbb{C}^n:z_j=z_k\}$, set $u=z_j-z_k$.
  The density of $\mu_n$ contains a factor $|u|^2$. Let $\chi_\varepsilon$ be
  a smooth cutoff function vanishing for $|u|\leq\varepsilon$, equal to $1$
  for $|u|\geq2\varepsilon$, and such that
  $|\nabla\chi_\varepsilon|\leq C/\varepsilon$. On each compact $K$, we get
  $\int_K|1-\chi_\varepsilon|^2\mathrm{d}\mu_n=O(\varepsilon^4)$, whereas
  $\int_K|\nabla\chi_\varepsilon|^2\mathrm{d}\mu_n\leq
  C'\varepsilon^{-2}\int_0^{2\varepsilon}r^2r\mathrm{d}r=O(\varepsilon^2)$.
  Now define $\eta_\varepsilon(z)=\prod_{j<k}\chi_\varepsilon(z_j-z_k)$. We
  have
  $|\nabla\eta_\varepsilon|^2\leq
  C''\sum_{j<k}|\nabla\chi_\varepsilon(z_j-z_k)|^2$. For every
  $h\in\mathcal{C}^\infty_c(\mathbb{C}^n)$, we have
  $h\eta_\varepsilon\in\mathcal{C}^\infty_c(\mathbb{C}^n_{\neq})$, and
  $h\eta_\varepsilon\to h$ for the graph norm since
  $\nabla(h(1-\eta_\varepsilon))=(1-\eta_\varepsilon)\nabla
  h-h\nabla\eta_\varepsilon$.
\end{proof}

\begin{lemma}[Capacity]\label{le:capacity}
  For all $n\geq1$, the set $\mathbb{C}^n_{=}$ has zero capacity.
\end{lemma}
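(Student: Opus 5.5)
Zero capacity will be shown by exhibiting, for every compact set $K\subset\mathbb{C}^n_{=}$ (or for $\mathbb{C}^n_{=}$ intersected with a large ball), functions $u_\varepsilon\in H^1(\mu_n)$ with $u_\varepsilon\geq1$ on a neighborhood of $K$ and $\|u_\varepsilon\|_{L^2(\mu_n)}^2+\mathcal{E}_n(u_\varepsilon,u_\varepsilon)\to0$. The usual $1$-capacity of the Dirichlet form is outer regular and countably subadditive. Moreover $\mathbb{C}^n_{=}$ is a finite union of the closed sets $D_{jk}=\{z_j=z_k\}$, each a countable union of compacts $D_{jk}\cap\overline{B(0,N)}$. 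It therefore suffices to treat $D_{jk}\cap\overline{B(0,N)}$ for fixed $j<k$ and $N$.

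The construction mirrors the cutoff in the proof of Lemma \ref{le:closure}. Let $\psi_N\in\mathcal{C}^\infty_c(\mathbb{C}^n,[0,1])$ equal $1$ on $B(0,N+1)$ and vanish outside $B(0,N+2)$. Set $u_\varepsilon(z)=\psi_N(z)\bigl(1-\chi_\varepsilon(z_j-z_k)\bigr)$, where $\chi_\varepsilon$ is the cutoff used there. Then $u_\varepsilon\in\mathcal{C}^\infty_c(\mathbb{C}^n)$, and $u_\varepsilon=1$ on the open neighborhood $\{|z_j-z_k|<\varepsilon\}\cap B(0,N+1)$ of the compact set.

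The bounds follow from the density of $\mu_n$. On the compact support, the density is bounded by $C|z_j-z_k|^2$ times a bounded function, so
\[
\|u_\varepsilon\|_{L^2(\mu_n)}^2\lesssim\int_0^{2\varepsilon}r^2\,r\,\mathrm{d}r=O(\varepsilon^4).
\]
Also $|\nabla u_\varepsilon|\leq|\nabla\psi_N|\,\mathbf 1_{|z_j-z_k|\le2\varepsilon}+C/\varepsilon\,\mathbf 1_{|z_j-z_k|\le2\varepsilon}$. Using local coordinates with $u=z_j-z_k$ as one complex variable, with constant Jacobian, this gives
\[
\int|\nabla u_\varepsilon|^2\,\mathrm{d}\mu_n\lesssim\varepsilon^{-2}\int_0^{2\varepsilon}r^3\,\mathrm{d}r=O(\varepsilon^2).
\]
Hence the $H^1$ norm tends to $0$, and the capacity of each piece is $0$.

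The main subtlety is the \emph{gain from the weight}. Even without it, real codimension $2$ gives capacity zero, via a logarithmic cutoff instead of $\chi_\varepsilon$. The factor $|u|^2$ makes even the crude linear cutoff suffice, so I expect no real obstacle beyond properly invoking subadditivity and outer regularity of capacity for the regular Dirichlet form \cite{FukushimaOshimaTakeda}. The standard consequence is that $\mathbb{C}^n_{=}$ is polar for $X^n$.
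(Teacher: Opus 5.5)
Your proof is correct, but it is organized differently from the paper's. The paper does not localize at all. It uses that the constant $1$ lies in $H^1(\mu_n)$ (radial cutoff, since $\mu_n$ is a probability measure) together with Lemma \ref{le:closure}, to get $v_m\in\mathcal{C}^\infty_c(\mathbb{C}^n_{\neq},\mathbb{R})$ with $v_m\to1$ in $H^1(\mu_n)$. Since the support of $v_m$ is a compact subset of $\mathbb{C}^n_{\neq}$, the function $1-v_m$ equals $1$ on an open neighborhood of the whole noncompact set $\mathbb{C}^n_{=}$, and its norm tends to $0$. You instead redo the underlying cutoff estimate by hand on each compact piece $D_{jk}\cap\overline{B(0,N)}$. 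You then glue the pieces with countable subadditivity of the capacity from \cite{FukushimaOshimaTakeda}; outer regularity is not even needed, since the paper's capacity is defined through neighborhoods.

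The paper's route buys brevity and no appeal to Choquet-capacity properties: for a closed set, approximability of $1$ by test functions vanishing near the set already gives zero capacity. Your route buys an explicit, self-contained rate, namely $O(\varepsilon^4)$ for the $L^2$ part and $O(\varepsilon^2)$ for the energy, which is exactly the estimate hidden inside Lemma \ref{le:closure}. You could also avoid subadditivity altogether. Drop $\psi_N$ and use the single global test function $1-\prod_{j<k}\chi_\varepsilon(z_j-z_k)$. The bound $\mu_n(|z_j-z_k|\leq2\varepsilon)=O(\varepsilon^4)$ holds on all of $\mathbb{C}^n$ thanks to the Gaussian factor.
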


\begin{proof}
  If $n=1$, then the set is empty. Suppose that $n\geq2$. Since
  $1\in H^1(\mu_n)$, there exists a sequence ${(v_m)}$ in
  $\mathcal{C}^\infty_c(\mathbb{C}^n_{\neq},\mathbb{R})$ such that $v_m\to1$
  in $H^1(\mu_n)$. Now $1-v_m$ is equal to $1$ in a neighborhood of
  $\mathbb{C}^n_{=}$. Hence
  $\mathrm{Cap}_n(\mathbb{C}^n_{=})\leq\|1-v_m\|_{L^2(\mu_n)}^2+\mathcal{E}_n(1-v_m,1-v_m)\to0$.
  Here we define the capacity of $C\subset\mathbb{C}^n$ as follows:
  $\mathrm{Cap}_n(C)=\inf\Bigl\{\|u\|_{L^2(\mu_n)}^2+\mathcal{E}_n(u,u):u\in
  H^1_{\mathbb{R}}(\mu_n), u\geq1\text{ on a neighborhood of $C$}\Bigr\}$.
\end{proof}

\section{Hermite orthogonal polynomials}
\label{se:hermite}

Set $\mathbb{N}=\{0,1,2,\ldots\}$. Let ${(H_{p,q})}_{p,q\in\mathbb{N}^n}$ be
the multivariate complex Hermite polynomials,
\begin{equation}
  H_{p,q}(z,\bar z)
  =\frac{(-1)^{|p|+|q|}}{\sqrt{p!q!}n^{\frac{|p|+|q|}{2}}}
  \mathrm{e}^{n|z|^2}\bar\partial_{z_1}^{p_1}\cdots\bar\partial_{z_n}^{p_n}\partial_{z_1}^{q_1}\cdots\partial_{z_n}^{q_n}\mathrm{e}^{-n|z|^2}
\end{equation}
where $|p|=p_1+\cdots+p_n$, $|q|=q_1+\cdots+q_n$, $p!=p_1!\cdots p_n!$, and
$q!=q_1!\cdots q_n!$. This is known as the Rodrigues formula for the
multivariate Hermite polynomials with respect to $\gamma_n$. This family of
polynomials forms an orthonormal basis\footnote{Also obtained with the
  Gram--Schmidt algorithm from an ordering of the algebraic basis
  ${(z^a\bar z^b)}_{a,b\in\mathbb{N}^n}$ of $\mathbb{C}[z,\bar z]$.} of
$L^2(\gamma_n)$,
\begin{equation}
  \int_{\mathbb{C}^n}H_{p,q}\overline{H_{p',q'}}\mathrm{d}\gamma_n
  =\mathbf{1}_{(p,q)=(p',q')}
  \quad\text{and}\quad
  \mathcal{H}_n=\overline{\operatorname{span}}\{H_{p,0}:p\in\mathbb N^n\}.
\end{equation}
Indeed, repeated Gaussian integration by parts shows that the family
${(H_{p,q})}_{p,q\in\mathbb{N}^n}$ is orthonormal. Moreover, $H_{p,q}$ is
triangular with leading monomial
$\frac{n^{(|p|+|q|)/2}}{\sqrt{p!q!}}z^p\overline z^q$. Since polynomials are
dense in $L^2(\gamma_n)$, triangularity gives completeness. Thus
${(H_{p,q})}_{p,q\in\mathbb{N}^n}$ is an orthonormal basis. Direct
differentiation of the Rodrigues formula gives the ``lowering relations''
\begin{equation}\label{eq:hermite:lowering}
  \bar\partial_{z_j}H_{p,q}=\sqrt{nq_j}\,H_{p,q-e_j}
  \quad\text{and}\quad
  \partial_{z_j}H_{p,q}=\sqrt{np_j}\,H_{p-e_j,q}
\end{equation}
with the convention that a polynomial with a negative index is zero.

The notation $e_j$ stands for the $j$-th element of the canonical basis of $\mathbb{R}^n$.

We denote by ${(h_{a,b})}_{a,b\in\mathbb{N}}$ the univariate complex Hermite
polynomials associated to the probability measure
$\mathcal{N}_{\mathbb{C}}(0,1/n)=\frac{n}{\pi}\mathrm{e}^{-n|z|^2}\mathrm{d}z$
on $\mathbb{C}$, the one-dimensional complex marginal of the probability
distribution
$\gamma_n=\mathcal{N}_{\mathbb{C}^n}(0,1/n)=\mathcal{N}_{\mathbb{C}}(0,1/n)^{\otimes
  n}$. We have $h_{a,b}(z_1)=H_{ae_1,be_1}(z_1,\ldots,z_n)$ for all $n\geq1$
and all $z_1,\ldots,z_n\in\mathbb{C}$. Their Rodrigues formula is
\begin{equation}\label{eq:one-variable-complex-hermite}
  h_{a,b}(z,\overline z)
  =
  \frac{(-1)^{a+b}}
  {\sqrt{a!b!}\,n^{(a+b)/2}}
  \mathrm{e}^{n|z|^2}
  \bar\partial_z^a\partial_z^b
  \mathrm{e}^{-n|z|^2}.
\end{equation}
Repeated Gaussian integration by parts shows that they are orthogonal and form
an orthonormal basis of $L^2(\mathcal{N}_{\mathbb{C}}(0,1/n))$. Moreover, we
have the lowering relations
\begin{equation}\label{eq:one-variable-hermite-lowering}
  \bar\partial_z h_{a,b} = \sqrt{nb}\,h_{a,b-1}
  \quad\text{and}\quad
  \partial_z h_{a,b} = \sqrt{na}\,h_{a-1,b},
\end{equation}
with the convention that $h_{a,-1}=h_{-1,b}=0$. Furthermore, we have
\begin{equation}\label{eq:holomorphic-hermite}
  h_{a,0}(z)=\sqrt{\frac{n^a}{a!}}z^a
  \quad\text{and}\quad
  h_{0,b}(z)=\sqrt{\frac{n^b}{b!}}\bar{z}^b.
\end{equation}

\section{Numerical experiments}

\begin{figure}[htbp]
  \centering
  \includegraphics[width=.49\textwidth]{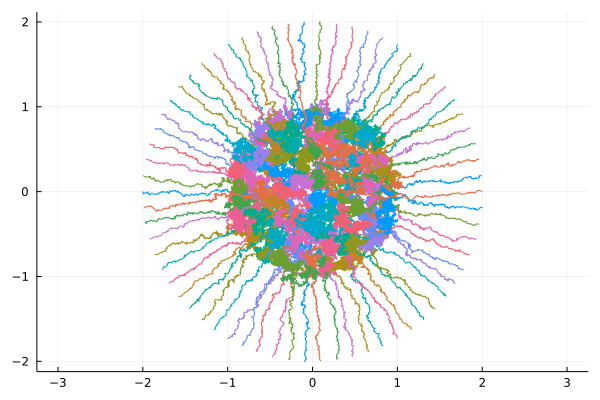}
  \includegraphics[width=.49\textwidth]{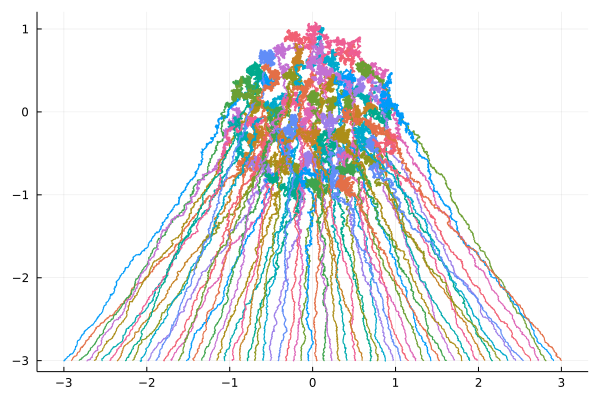}\\
  \caption{The overdamped Langevin dynamics for $n=66$ and two kinds of
    initial conditions (left side: uniformly spaced on a circle, right side:
    on a segment). The attraction for the unit disc is the circular law
    phenomenon, the planar analogue of the semicircle law, already noticed at
    the static level by \cite{Ginibre1965}, see also \cite{ChafaiAspects} and
    \cite{ByunForresterGinibre} for a modern overview.}
\end{figure}

\bibliographystyle{abbrvnat}
{\footnotesize\bibliography{poing}}

\end{document}